\def\th@plain{%
  \thm@notefont{}
  \itshape 
}
\def\th@definition{%
  \thm@notefont{}
  \normalfont 
}
\numberwithin{equation}{section}
\newtheorem{thm}{Theorem}[section]
\newtheorem{cor}[thm]{Corollary}
\newtheorem{define}[thm]{Definition}
\newtheorem{lemma}[thm]{Lemma}
\newtheorem{prop}[thm]{Proposition}
\newtheorem{rmk}[thm]{Remark}
\DeclareMathOperator{\R}{\mathbb{R}}
\DeclareMathOperator{\N}{\mathbb{N}}
\DeclareMathOperator{\Q}{\mathbb{Q}}
\DeclareMathOperator{\Z}{\mathbb{Z}}
\DeclareMathOperator{\E}{\mathbb{E}}
\DeclareMathOperator{\PP}{\mathbb{P}}
\DeclareMathOperator{\PPi}{\mathscr{P}}
\DeclareMathOperator{\A}{\mathcal{A}}
\DeclareMathOperator{\AAA}{\mathfrak{A}}
\DeclareMathOperator{\D}{\mathbb{D}}
\DeclareMathOperator{\ZZ}{\tilde{Z}}
\DeclareMathOperator{\SSS}{\mathcal{S}}
\DeclareMathOperator{\W}{\mathcal{W}}
\DeclareMathOperator{\indic}{\mathbbm{1}}
\DeclareMathOperator{\G}{\mathcal{G}}
\DeclareMathOperator{\F}{\mathcal{F}}
\DeclareMathOperator{\h}{\mathcal{H}}
\newcommand{\dq}{\overset{d}{=}}
\newcommand{\bd}{\overset{\beta}{\searrow}}
\newcommand{\lb}{\llbracket}
\newcommand{\rb}{\rrbracket}
\title{Convergence from the Log-Gamma Polymer to the Directed Landscape}
\author{Xinyi Zhang}
\begin{document}

\begin{abstract}

We define the log-gamma sheet and the log-gamma landscape in terms of the $2$-parameter and $4$-parameter free energy of the log-gamma polymer model and prove that they converge to the Airy sheet and the directed landscape, which are central objects in Kardar-Parisi-Zhang (KPZ) universality class. Our proof of the convergence to the Airy sheet relies on the invariance of free energy through the geometric RSK correspondence and the monotonicity of the free energy. To upgrade the convergence to the directed landscape, tail bounds in both spatial and temporal directions are required. However, due to the lack of scaling invariance in the discrete log-gamma polymer---unlike the Brownian setting of the O’Connell–Yor model---existing on-diagonal fluctuation bounds are insufficient. We therefore develop new off-diagonal local fluctuation estimates for the log-gamma polymer. 

\end{abstract}

\maketitle

\section{Introduction}
The study of stochastic growth models has long been one of the central topics in probability theory \cite{barabasi1995fractal,comets2004probabilistic,kipnis2013scaling,halpin1995kinetic,meakin1998fractals,majumdar2007random,spohn2006exact}. The well-known central limit theorem characterizes the magnitude of the fluctuation of a sum of independent and identically distributed (i.i.d.) random variables with $1/2$ exponent. The limiting distribution of the fluctuation after proper scaling will be Gaussian. However, with certain dependence of variables introduced to the system, we observe totally different limiting behavior. As suggested by various experimental evidence and numerical simulations \cite{takeuchi2010universal,maunuksela1997kinetic,miettinen2005experimental,wang2003universality}, a large class of random growth models including random growth interfaces, interacting particle systems, and directed polymer models have fluctuations of $1/3$ scaling exponent and have completely different limiting distributions. These random growth models fall into  the \textit{Kardar–Parisi–Zhang (KPZ) universality class}. 

There have been many important breakthroughs in the last twenty-five years in understanding the KPZ universality, including the work \cite{dauvergne2022directed} where Dauvergne, Ortmann and Vir\'ag constructed central objects in the KPZ universality class: the Airy sheets and the directed landscape. Not only do the Airy sheets and the directed landscape capture the intricate geometry of random growth phenomena but they are also believed to be the final limiting objects of growth models in the KPZ universality class. The Airy sheets and the directed landscape have been identified as scaling limits for a few zero-temperature models. Notable examples include Poissonian, exponential, and geometric Last Passage Percolation, as demonstrated in the works of Dauvergne-Nica-Virág \cite{dauvergne2021scaling} and \cite{dauvergne2023uniform}. 

However, proving such convergences are more challenging in positive temperature models and hence there are few results. The challenges of the positive temperature models come from two reasons. First, the robust determinantal structure that is present in the solvable zero temperature models is lost at positive temperature, and replaced by exact formulas that have generally only allowed for one-point asymptotic results. Second, there is no direct metric composition law as in the zero-temperature models and in the limit of the directed landscape; although in some cases, such as the KPZ equation and the log-gamma polymer, this absence of direct metric composition law is mitigated by a variational formula of the free energy. Despite these challenges, there are still notable results. Namely, the solutions to the KPZ equation by Wu \cite{wu2023kpz} and the colored ASEP by Aggarwal, Corwin, and Hegde \cite{aggarwal2024scaling} and the recent results by Dauvergne and Zhang that upgrade the KPZ fixed point convergence to the Directed landscape convergence \cite{dauvergne2024characterization}.

In this paper, we prove that the log‐gamma polymer, another positive temperature model  introduced in \cite{Sepp_l_inen_2012}, converges to the directed landscape. Our analysis builds on the geometric Robinson–Schensted–Knuth (RSK) correspondence, which plays a crucial role in relating the free energy of the log‐gamma polymer to the corresponding free energy on the dual line ensemble. The continuous limit of the geometric RSK correspondence preserves certain polymer free energy and the zero-temperature limit of the geometric RSK correspondence, also known as the melon map in \cite{dauvergne2022directed}, preserves certain last passage values. Motivated by how these invariance principles are used in \cite{wu2023kpz} and \cite{dauvergne2022directed} to prove convergence to directed landscape, we exploit the same invariance property of the discrete geometric RSK correspondece to obtain the key equality (\ref{equality}). This equality together with the variational formula (\ref{equation_variational_f}) serves as the main instrument to prove the convergence to the Airy sheet as it enables us to disintegrate the free energy from the bottom-left point to the top-right point of the line ensemble into two parts that only pertain the information of few top curves of the line ensemble. Similar to \cite{wu2023kpz}, we also leverage the monotonicity properties of Busemann functions to show that the contribution coming from the bottom part of the line ensemble becomes negligible in the scaling limit.

Due to lack of a nice coupling between the KPZ line ensemble and the KPZ sheet at the time that \cite{wu2023kpz} was written, Wu worked with the O'Connell Yor polymer which is the pre-limit of the KPZ line ensemble and has nice Brownian environment. In our case, the situation simplifies as there already exists a nice coupling between the log-gamma polymer and log-gamma line ensemble through the geometric RSK correspondence and hence we only need to take limit once instead of twice. 

However, the continuous Brownian environment in the work of \cite{wu2023kpz} is not present in the log-gamma polymer model. Although the Brownian symmetry is approximated by the i.i.d. environment of the polymer, lack of Brownian scaling has created challenges in proving the tightness of the log-gamma landscape. It is no longer sufficient to prove a tail bound for local fluctuations of the free energy along the diagonal line. Untypical off-diagonal fluctuations are also needed. We solved this by generalizing the methods used in \cite{basu2024temporal} for the diagonal case to the off-diagonal case.

\subsection{Log-gamma polymer}
\begin{define}
An inverse-gamma random variable $X \sim \text{Ga}^{-1}(\theta)$ with parameter $\theta > 0$ is a continuous random variable with density given by
\begin{equation}
f_\theta(x) = \frac{\indic\{x > 0\}}{\Gamma(\theta)}x^{-\theta-1}\exp(-x^{-1}).
\end{equation}
\end{define}

Fix some $\theta > 0$. Let $\{d_{i,j}\}_{i,j \in \Z}$ be i.i.d inverse-gamma random variables with parameter $\theta$ and $(\Omega, \mathcal{F}, \PP)$ be the probability space on which $\{d_{i,j}\}_{i,j \in \Z}$ are defined. 

Let us use $\preceq$ to denote the order on $\Z^2$ where $(x_1,y_1) \preceq (x_2,y_2)$ if $x_1 < x_2$ or $x_1 = x_2, y_1 \leq y_2$. Let $u, v \in \Z^2$. An up-right lattice path $\pi$ connecting $u$ to $v$ is a set of vertices that can be ordered as $u =(x_1, y_1), \cdots, (x_n,y_n) = v \in \Z^2$ such that $(x_i,y_i) \preceq (x_{i+1},y_{i+1})$ and $x_{i+1} - x_i + y_{i+1}-y_i = 1$ for all $i= 1, \cdots, n-1$. Let $\Pi(u \rightarrow v)$ denote the set of up-right lattice path $\pi$ that connects $u$ to $v$. 

Let $\pi = (\pi_1, \cdots, \pi_k)$ where $\pi_i$ is an up-right path. We call $\pi$ a multipath if for all $1 \leq i \neq j \leq k$, $\pi_i \cap \pi_j = \emptyset$. Let $U$ and $V$ be two finite subsets of $\Z^2$ with the same cardinality. We call $(U,V)$ an endpoint pair if there exists a multipath $\pi = (\pi_1, \cdots, \pi_k)$ such that for every $u \in U$, there exists a path $\pi_i$ and $v \in V$ such that $\pi_i$ connects $u$ to $v$. Similarly, we use $\Pi[U \rightarrow V]$ to denote the set of multipaths $\pi$ that connects $U$ to $V$. 

\begin{define}[Polymer partition function and polymer free energy]\label{def_polymer_partition}
    Let $u, v \in \Z^2$. We define the partition function of the inverse-gamma polymer from $u$ to $v$ by
    \begin{equation}
        Z[u \rightarrow v] = \sum_{\pi \in \Pi[u \rightarrow v]} \prod_{(i,j) \in \pi} d_{i,j}. 
    \end{equation}
    Moreover, we call 
    \begin{equation}
        \log Z[u \rightarrow v]
    \end{equation} 
    the free energy of the inverse-gamma polymer from $u$ to $v$.

    For $u,v \in \Z^2$ such that $u \not \preceq v$, we assume $Z[u \rightarrow v] = 0$ and $\log Z[u \rightarrow v] = -\infty$.
\end{define}

\begin{define}
    Let $q, \sigma_p$ be some constants that only depend on the parameter $\theta$ and will be defined later in Section \ref{section3} Definition \ref{def_psi} and Thoerem \ref{thm_airy_line_ensemble} respectively. Let $\overline{x}_N = \lfloor N^{2/3}xq^{-2} \rfloor +1$, $\widehat{y}_N = \lfloor N^{2/3}yq^{-2} \rfloor + 2N$. We define the log-gamma sheet $h^N(x,y)$ to be the following scaled two-parameter polymer free energy:   
   \begin{equation}
    h^N(x,y) = 2^{-1/2}q\sigma_pN^{-1/3}\biggl[\log Z[(\overline{x}_N,1) \rightarrow (\widehat{y}_N,2N)]-p(\widehat{y}_N - \overline{x}_N + 2N)\biggr]
    \end{equation}
\end{define}

\begin{define}
    For $t,x \in \R$, we define the scaling operations $\overline{x}_N = \lfloor N^{2/3}xq^{-2} \rfloor + 1, t_N = \lfloor 2Nt \rfloor$. For $t,s,x,y \in \R$ with $t > s$, we define the log-gamma landscape $h^N(x,s;y,t)$ as the following scaled four-parameter polymer free energy:
    \begin{equation}
    \begin{split}
        h^N(x,s;y,t) &=2^{-1/2}q\sigma_pN^{-1/3} \biggl[ \log Z\bigl[(\overline{x}_N +s_N, s_N) \rightarrow  (\overline{y}_N +t_N -1,t_N-1)\bigr]  -p(\overline{y}_N - \overline{x}_N + 4N(t-s)) \biggr] 
    \end{split}
    \end{equation}
\end{define}

\subsection{Main Results}
Let us use $C(\R^2, \R)$ and $C(\R_+^4, \R)$ to denote the space of continuous functions on $\R^2$ and $\R_+^4: = \{(x,s;y,t) \in \R^4: s<t\}$, equipped with the topology of uniform convergence over compact subsets.

\begin{thm}[Airy sheet convergence]
    The continuous linear interpolation of $h^N(x,y)$ converges to the Airy sheet $S(x,y)$ in distribution as $C(\R^2,\R)$-random variables.
\end{thm}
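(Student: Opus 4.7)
The plan is to combine the geometric RSK correspondence with a variational decomposition of the free energy, the convergence of the top of the log-gamma line ensemble to the parabolic Airy line ensemble, and a vanishing estimate for the bottom-curve contribution. First, I would use the geometric RSK correspondence to realize $\log Z[(\overline{x}_N,1) \to (\widehat{y}_N,2N)]$ as a functional of a single dual line ensemble $\{Z^N_i\}_{i \geq 1}$, with the dependence on $(x,y)$ entering only through the boundary points. The point of this step is that it produces a coupling of $h^N(x,y)$ for all $(x,y)$ simultaneously from one realization of the line ensemble, which is what allows two-parameter convergence rather than a sequence of one-point limits.

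Second, I would apply the identity (\ref{equality}) coming from the geometric RSK invariance together with the variational formula (\ref{equation_variational_f}) to decompose $h^N(x,y)$ as
\begin{equation*}
h^N(x,y) = T^N_k(x,y) + B^N_k(x,y),
\end{equation*}
where $T^N_k$ is a supremum/integral depending only on the top $k$ curves of the line ensemble and $B^N_k$ is a remainder involving the curves below level $k$. The scaling of the log-gamma line ensemble is designed precisely so that, as $N\to\infty$ with $k$ fixed, the top $k$ curves converge (jointly in $(x,y)$) to the parabolic Airy line ensemble by Theorem \ref{thm_airy_line_ensemble}; the continuous-mapping theorem applied to the variational formula will then identify the limit of $T^N_k$ with the Airy-sheet variational formula, yielding $T^N_k \Rightarrow S_k$ with $S_k \to S$ as $k\to\infty$.

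Third, I would control the remainder $B^N_k$ by exploiting monotonicity of Busemann-type increments as in \cite{wu2023kpz}. The idea is to bound $B^N_k$ uniformly on compact $(x,y)$-sets by a quantity computed from level $k$ alone, whose typical size on the $N^{-1/3}$-scale tends to $0$ first as $N\to\infty$ and then as $k\to\infty$. Combined with the previous step, this establishes finite-dimensional convergence of $h^N$ to $S$.

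Finally, to upgrade finite-dimensional convergence to convergence of the continuous linear interpolation in $C(\R^2,\R)$, I would prove tightness by a modulus-of-continuity estimate of Kolmogorov type for the increments $h^N(x,y)-h^N(x',y')$, together with a tail bound at a single reference point. I expect the main obstacle to be obtaining the bottom-contribution estimate uniformly in $(x,y)$ on compacts with sharp enough dependence on $k$ to survive the scaling limit: the lack of Brownian symmetry in the inverse-gamma environment — which in \cite{wu2023kpz} permitted explicit scaling arguments on the O'Connell–Yor side — is absent here, so the required off-diagonal local fluctuation bounds for the free energy have to be built from scratch, generalizing the on-diagonal estimates of \cite{basu2024temporal}.
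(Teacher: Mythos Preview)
Your high-level strategy---RSK coupling, a variational split at level $k$, line-ensemble convergence for the top, and a vanishing estimate for the bottom---matches the paper. Two points deserve correction.

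First, the decomposition is not additive. The variational formula (\ref{equation_variational_f}) is a log-sum-exp, so there is no clean split $h^N=T^N_k+B^N_k$ with $T^N_k$ a functional of the top $k$ curves alone. The paper does not attempt this. Instead it works with \emph{increments}: using the variational formula one defines a random probability measure $\mu^N_{k,x,y}$ on the intermediate point, and the monotonicity of Corollary~\ref{cor_monotonicity} yields a sandwich
\[
G^N_k(z,y_2)-G^N_k(z,y_1)+\log B^N_k \;\le\; h^N(x,y_2)-h^N(x,y_1)\;\le\; G^N_k(z,y_2)-G^N_k(z,y_1)-\log A^N_k,
\]
where $G^N_k$ is the top-$k$ free energy and the errors $\log A^N_k,\log B^N_k$ are in turn controlled by the bottom quantity $R^N_k(x,z)=F^N_k(x,z)-2zx-2^{3/2}k^{1/2}x^{1/2}$. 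The point is that $G^N_k$-increments converge to Airy last-passage increments (Lemma~\ref{D to C}), and $k^{-1/2}R_k\to 0$ via Borel--Cantelli and \cite[Theorem 6.3]{dauvergne2022directed}. So the identification of the limit goes through the Airy-sheet \emph{characterization} (translation invariance, $h(0,\cdot)=A_1$, and the increment identity), not through ``$T^N_k\Rightarrow S_k\to S$''.

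Second, your tightness plan is misplaced. For the two-parameter sheet the paper does \emph{not} use Kolmogorov-type increment bounds, and the off-diagonal fluctuation estimates you flag as the main obstacle are not needed here at all---they enter only for the four-parameter landscape. Tightness of $\tilde h^N$ in $C(\R^2,\R)$ is obtained cheaply: one-point tightness at $(0,0)$, plus the quadrangle inequality (Corollary~\ref{quadrangle}) which bounds the two-dimensional modulus of continuity on $[-L,L]^2$ by one-dimensional moduli of the four boundary processes $h^N(L_\pm,\cdot)$ and $h^N(\cdot,L_\pm)$. Those one-parameter processes are tight by \cite{barraquand2023spatial}. Reserving the hard local-fluctuation work for the sheet would be wasted effort.
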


\begin{thm}[Directed landscape convergence]
    The continuous linear interpolation of $h^N(x,s;y,t)$ converges to the directed landscape $\mathcal{L}(x,s;y,t)$ in distribution as $C(\R^4_+,\R)$-random variables.
\end{thm}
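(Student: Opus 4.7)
The plan is to upgrade the preceding Airy-sheet convergence to the full four-parameter statement by verifying the three ingredients of a characterization of the directed landscape in the spirit of \cite{dauvergne2024characterization}: (a) convergence of the time-marginals to independent, appropriately scaled Airy sheets; (b) an asymptotic metric-composition law; and (c) tightness in $C(\R_+^4, \R)$. Once (a)--(c) are in place, any subsequential weak limit of $h^N$ in $C(\R_+^4,\R)$ is uniquely identified as $\mathcal{L}$, which gives the convergence in distribution.

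For ingredient (a), I would exploit translation invariance of the i.i.d.\ environment together with a relabelling of the scaling parameter $N \leftrightarrow N(t-s)$ to identify, for each fixed $s<t$, the two-parameter map $(x,y)\mapsto h^N(x,s;y,t)$ with a log-gamma sheet at effective level $N(t-s)$; the Airy-sheet theorem then yields convergence to $(t-s)^{1/3} S((t-s)^{-2/3}x,\, (t-s)^{-2/3}y)$. For disjoint intervals $(s_\ell,t_\ell)$ the slabs of weights $\{d_{i,j}:\check{s}_{\ell,N}<j\le\check{t}_{\ell,N}\}$ used in the different marginals are disjoint, so pre-limit independence combined with the single-slab convergence delivers joint convergence. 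For (b), the partition function satisfies the decomposition
\[
Z\bigl[(x_N,\check{s}_N+1)\to(y_N,\check{t}_N)\bigr] \;=\; \sum_{z} Z\bigl[(x_N,\check{s}_N+1)\to(z,\check{r}_N)\bigr]\, Z\bigl[(z,\check{r}_N+1)\to(y_N,\check{t}_N)\bigr]
\]
for any $s<r<t$ (decomposing each path by the column $z$ at which it jumps from row $\check{r}_N$ to row $\check{r}_N+1$). Applying the $2^{-1/2}q\sigma_p N^{-1/3}\log$ rescaling turns the right-hand side into a log-sum-exp of quantities of order one, and Laplace's method---paired with tail bounds from step (c) that localize the argmax to a compact window---produces the identity $\mathcal{L}(x,s;y,t)=\sup_z[\mathcal{L}(x,s;z,r)+\mathcal{L}(z,r;y,t)]$ in the limit.

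Ingredient (c), tightness, is the main technical obstacle and the place where the new off-diagonal fluctuation estimates advertised in the abstract enter. A Kolmogorov--\v{C}entsov-type criterion reduces tightness to a uniform (in $N$) sub-exponential bound
\[
\PP\bigl(|h^N(x,s;y,t)-h^N(x',s';y',t')|\ge \lambda\bigr) \;\le\; C e^{-c\lambda^{\alpha}}
\]
whenever $(x,s,y,t)$ and $(x',s';y',t')$ lie in a fixed compact set and are close. Splitting the four-parameter increment into a spatial piece and a temporal piece, the spatial piece is handled by the modulus-of-continuity estimates already used in the Airy-sheet theorem. The temporal piece requires controlling differences of polymer free energies whose endpoints are shifted in time and, in general, also shifted in space, i.e.\ \emph{off-diagonal} local fluctuations. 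Unlike the Brownian setting of \cite{wu2023kpz}, where exact scaling symmetry converts on-diagonal estimates into off-diagonal ones automatically, the discrete log-gamma polymer has no such invariance; the hardest step of the proof will therefore be to adapt the comparison and quantile arguments of \cite{basu2024temporal} to endpoints not aligned with the characteristic direction and to assemble the resulting bounds into the required uniform H\"older continuity.
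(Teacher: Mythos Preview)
Your three-step plan (tightness via off-diagonal fluctuation bounds adapted from \cite{basu2024temporal}; identification of time-marginals as independent rescaled Airy sheets via the relabeling $N\mapsto N(t-s)$; and a metric composition law from the partition-function decomposition plus Laplace's method) matches the paper's structure exactly, and your diagnosis of why the temporal tightness requires genuinely off-diagonal estimates is on the mark.

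One point of divergence worth flagging: in step (b) you propose to obtain the \emph{full} identity $\mathcal{L}=\sup_z[\cdots]$ by Laplace together with ``tail bounds from step (c) that localize the argmax to a compact window.'' The tightness bounds in (c) are local modulus-of-continuity estimates and do not by themselves give the global parabolic decay in $z$ needed to localize the supremum over all of $\R$; establishing that decay at the prelimit level is extra work. The paper bypasses this entirely: it only proves the one-sided inequality $h(x,t_i;y,t_k)\ge \sup_z[h(x,t_i;z,t_j)+h(z,t_j;y,t_k)]$ (restricting the Laplace argument to a fixed window $[-M,M]$, using the variational formula to dominate by $h^N(x,t_i;y,t_k)$, and sending $M\to\infty$), and then gets equality for free by invoking \cite[Proposition~9.2]{dauvergne2022directed}, which says the right-hand side is already an Airy sheet of scale $(t_k-t_i)^{1/3}$---the same distribution as the left-hand side. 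This distributional trick is cleaner than the localization you sketch and avoids the need for any global tail input.
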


\subsection{Airy line ensemble, the Airy sheets, and the directed landscape}
In this section, we follow the presentation of \cite{dauvergne2022directed} and introduce the central limiting objects in the KPZ universality class: the Airy line ensemble, Airy sheets, and the directed landscape.

The finite-dimensional distributions of the stationary Airy line ensemble were first introduced by Pr\"ahofer and Spohn in \cite{prahofer2002scale}, where it was referred to as the ``multi-line Airy process". Subsequently, Corwin and Hammond \cite{corwin2014brownian} proved that this ensemble can be realized as a collection of continuous random functions indexed by $\N \times \R$, governed by the Brownian Gibbs property. 

\begin{define}
The stationary Airy line ensemble, denoted by $\tilde{\A} = \{\tilde{\A}_1 > \tilde{\A}_2 > \cdots\}$, is an infinite collection of random functions from $\R$ to $\R$ indexed by the natural numbers. The distribution of $\tilde{\A}$ is uniquely characterized by its determinantal structure: for any finite set $I = \{u_1, \dots, u_k\} \subset \mathbb{R}$, the point process on $I \times \mathbb{R}$ defined by $\{(s, \tilde{\A}_i(s)) : i \in \mathbb{N}, s \in I\}$ is a determinantal point process with kernel
\begin{equation}
K(s_1, x_1; s_2, x_2) =
\begin{cases}
\int_{0}^{\infty} e^{-z(s_1 - s_2)} \mathrm{Ai}(x_1 + z) \mathrm{Ai}(x_2 + z) \, dz & \text{if } s_1 \geq s_2, \\
-\int_{-\infty}^{0} e^{-z(s_1 - s_2)} \mathrm{Ai}(x_1 + z) \mathrm{Ai}(x_2 + z) \, dz & \text{if } s_1 < s_2,
\end{cases}
\end{equation}
where $\mathrm{Ai}$ denotes the Airy function.
\end{define}

As suggested by the name, the stationary Airy line ensemble $\tilde{\A}(t) = (\tilde{\A}_1(t),\tilde{\A}_2(t), \cdots )$ is stationary in $t$. Our focus, however, is on the parabolic Airy line ensemble. 

\begin{define}

The parabolic Airy line ensemble $\mathcal{A} = \{\A_1 > \A_2 > \cdots\}$ can be derived from $\tilde{\A}$ via
\begin{equation}
\A_i(x) := \tilde{\A}_i(x) - x^2.    
\end{equation}

Fix any real number $q > 0$, we define the Airy line ensemble of scale $q$, $\A^q = \{\A^q_1 > \A^q_2 > \cdots\}$, to be
\begin{equation}
    \A_i^q(x) = q^{-1}\A_i(q^2x).
\end{equation}
\end{define}

The Airy sheet was constructed by Dauvergne, Ortmann and Vir\'ag in \cite{dauvergne2022directed} via a last passage percolation framework on the Airy line ensemble. Thus, we need to first define the last passage time on a general line ensemble. 

\begin{define}\label{def_lpp}
    Let $f = (f_1, f_2, \cdots)$ be a sequence of functions from $\R$ to $\R$, we define the last passage time from $(x,\ell)$ to $(y,m)$ where $x \leq y$ and $\ell, m \in \Z$ such that $\ell \geq m$ as
    \begin{equation}
        f[(x,\ell) \xrightarrow{\infty} (y,m)] := \max_{x=t_{\ell+1} \leq t_{\ell} \leq t_{\ell-1} \leq \cdots \leq t_{m+1} \leq t_m = y} \sum_{j=m}^{\ell} \bigg( f_j(t_j) - f_j(t_{j+1}) \bigg).
    \end{equation}
    \end{define}

\begin{define}\label{def_fpp}
    Let $g = (g_1, g_2, \cdots)$ be a sequence of functions from $\R$ to $\R$, we define the backwards first passage time from $(x, 1)$ to $(y,m)$ where $x \leq y$ and $m \in \Z$ as
    \begin{equation}
        g[(x,1) \rightarrow_f (y,m)] := \min_{x=t_{0} \leq t_1 \leq \cdots \leq t_m = y} \sum_{j=1}^m \bigg(g_j(t_j) - g_j(t_{j-1})\bigg).
    \end{equation}
    \end{define}

We are now ready to define the Airy sheets and the directed landscape, both constructed in \cite{dauvergne2022directed}, through their characterizing properties. The fact that these properties uniquely determine the distributions of the Airy sheets and the directed landscape were proved in \cite{dauvergne2022directed}.

\begin{define}
The Airy sheet, denoted by $\SSS(x, y)$, is a random continuous function from $\mathbb{R}^2$ to $\R$ which can be uniquely characterized by the following properties:
\begin{enumerate}
    \item The distribution of $\SSS(\cdot + t, \cdot + t)$ is invariant under translations by $t$ for all $t \in \R$.
    \item There exists a coupling between $\SSS$ and an Airy line ensemble $\A$ where the marginal $\SSS(0, \cdot) = \A_1(\cdot)$, the Airy$_2$ process, and for all $x > 0$ and $y_1, y_2 \in \mathbb{R}$, the following limit holds almost surely:
    \begin{equation}
    \begin{split}
       & \lim_{k \to \infty} \A\left[ ( -2^{-1/2} k^{1/2} x^{-1/2}, k ) \xrightarrow{\infty} \left( y_2, 1 \right) \right] - \A\left[ ( -2^{-1/2} k^{1/2} x^{-1/2}, k ) \xrightarrow{\infty} \left( y_1, 1 \right)  \right] \\
        &= \SSS(x, y_2) - \SSS(x, y_1)
    \end{split}
    \end{equation}
\end{enumerate}
where $\A[(x,k) \xrightarrow{\infty} (y,1)]$ denotes the last passage time on the Airy line ensemble $\A$.

For any real number $q > 0$, we define the Airy sheet of scale $q$ to be
\begin{equation}
    \SSS^q(x,y) = q\SSS(q^{-2}x, q^{-2}y).
\end{equation}
\end{define}

\begin{define}
The directed landscape, denoted by $\mathcal{L}(x, s; y, t)$, is a continuous random function from $\mathbb{R}^4_+ = \{(x,s;y,t) \in \R^4: t > s\}$ to $\R$ that can be uniquely characterized by the following properties:
\begin{enumerate}
    \item For $t > s$, the marginal $\mathcal{L}(\cdot, s;\cdot,t)$ is distributed as an Airy sheet with scale $(t - s)^{1/3}$.
    \item For any finite set of disjoint intervals $\{(s_j, t_j)\}_{j=1}^m$, the functions $\{\mathcal{L}(\cdot,s_j; \cdot, t_j)\}_{j=1}^m$ are independent.
    \item For all $s < r < t$ and $x, y \in \mathbb{R}$, the following additivity property holds almost surely:
    \begin{equation}
    \mathcal{L}(s, x; t, y) = \max_{z \in \mathbb{R}} \left[ \mathcal{L}(s, x; r, z) + \mathcal{L}(r, z; t, y) \right].
    \end{equation}
\end{enumerate}
\end{define}

\subsection{Organization of the paper} Section 2 introduces invariance between the free energy of the polymer model the via the geometric RSK framework. Section 3 applies the results from Sections 2 and connects the free energy of the log-gamma polymer to the last passage percolation on Airy line ensemble. In Section 4  and 5, we present the convergence results by proving the tightness of the pre-limiting objects and showing that their distributional limits satisfy the characterizing properties of the Airy sheet and the directed landscape. In Section 6, we prove the key proposition that is used to establish the tightness of the log-gamma sheet.

\subsection{Notations}
Throughout this paper, we will use $\lb 1, n \rb$ to denote the set $\{1, 2, \cdots, n\}$ and $x \wedge y$ to denote the minimum of $x$ and $y$. We adopt the convention that an empty sum is interpreted as $0$ and an empty product as $1$.

\subsection{Acknowledgments}
The author sincerely thanks their advisor, Ivan Corwin, for suggesting this interesting problem, offering insightful guidance throughout the project, and providing valuable feedback on the manuscript. Thanks to Milind Hegde for many helpful discussions, and to Xiao Shen for insightful perspectives on the local fluctuations of the log-gamma free energy. The author is grateful to Zongrui Yang, Jiyue Zeng, and Alan Zhao for their valuable discussions as well. This research was partially supported by Ivan Corwin's National Science Foundation grant DMS:2246576 and Simons Investigator in Mathematics award MPS-SIM-00929852.

\section{ Geometric RSK correspondence: invariance of free energy}
In this section, we will consider the directed polymers with a deterministic environment and introduce some basic definitions and properties. The main ingredient in our analysis is the invariance of discrete free energy under the discrete geometric Pitman transform, also known as the geometric RSK correspondence.

Instead of $\Z^2$, let us restrict ourself to an infinite strip of the first quadrant, i.e., $\Z_{\geq 1} \times \lb 1, n \rb$ for some fixed $n.$ In order to be consistent with the line ensemble notations in the literature and notations in \cite{corwin2020invariance}, we always relabel the coordinate $(x,i)$ in $\Z_{\geq 1} \times \lb 1 , n \rb$ as $(x, n - i +1)$ in line ensemble environment, but we still consider the order $\preceq$ with respect to the original coordinate system, i.e. $(x_1,y_1) \preceq (x_2,y_2)$ if $x_1 < x_2$ or $x_1 = x_2, y_1 \leq y_2$ in the original coordinate system and $(x_1,y_1) \preceq (x_2,y_2)$ if $x_1 < x_2$ or $x_1 = x_2, y_2 \leq y_1$ in the new coordinate system.

\subsection{Discrete free energy}

Let $(x,\ell), (y,m) \in \Z_{\geq 1} \times \lb 1, n \rb$ where $x \leq y$ and $\ell \geq m$. Recall that $\Pi[(x,\ell) \rightarrow (y,m)]$ is the set of up-right paths from $(x,\ell)$ to $(y, m)$. There is an injective map from $\Pi[(x,\ell) \rightarrow (y,m)]$ to $\Z^{\ell -m}$, $\pi \rightarrow (t_{\ell}, \cdots, t_{m+1})$ where $t_i = \max \{t \in \lb x,y \rb| (t,i) \in \pi \}$. We will also set $
t_{\ell+1} =x, t_m = y$.

We define the discrete free energy of a discrete line ensemble as follows.

\begin{define}\label{def_free_energy}
    Let $r_1 \leq \cdots \leq r_n$ be $n$ weakly increasing positive integers. Let $f = (f_1, \cdots, f_n)$ be an $n$-tuple of functions where $f_i: \Z_{\geq r_i} \rightarrow \R$ for $i \in \lb 1, n \rb$. Assume that $f_i(r_i -1) = 0$ for all $i \in \llbracket 1, n \rrbracket$. We define $\{(x,y): y \in \lb 1, n \rb, x \in \Z_{\geq r_y}\}$ to be the domain of the discrete line ensemble $f$. For an single up-right path $\tau$ contained in the domain of $f$, define
    \begin{equation}
        f(\tau) := \sum_{j=m}^{\ell} f_j(t_j) - f_j(t_{j+1}-1).
    \end{equation}

    For any multipath $\pi = (\pi_1, \cdots, \pi_k)$ contained in the domain of $f$, define
    \begin{equation}
        f(\pi) := \sum_{i=1}^k f(\pi_i).
    \end{equation}

    Let $(U,V)$ be an endpoint pair contained in the domain of $f$. We define the free energy from $U$ to $V$ with respect to $f$ to be
    \begin{equation}
        f[U \rightarrow V] := \log \sum_{\pi \in \Pi[U \rightarrow V]} \exp( f(\pi)).
    \end{equation}
\end{define}

\begin{rmk}\label{rmk_equivalence_of_Z_and_f}
    Definition \ref{def_free_energy} is consistent with Definition \ref{def_polymer_partition}. Let $\{d_{i,j}\}_{i,j \in \Z}$ be a sequence of positive real number that represents the polymer weight. Define $f_i(x) = \log(\prod_{j=1}^x d_{j,n+1-i})$ or $d_{x,n+1-i} = \frac{\exp(f_i(x))}{\exp(f_{i}(x-1))}$. For any endpoint pair $(U,V)$ in the domain of $f$, let $(\tilde{U}, \tilde{V})$ be its image under the map $(x, i) \mapsto (x,n-i+1)$; then
    \begin{equation}
        \log Z[\tilde{U} \rightarrow \tilde{V}] = f[U \rightarrow V].
    \end{equation}
\end{rmk}

\subsection{Invariance of discrete free energy}
As noted in Remark \ref{rmk_equivalence_of_Z_and_f}, the discrete free energy of a discrete line ensemble, when defined in terms of the polymer weight, coincides with the logarithm of the polymer partition function. Thus, we will deduce the invariance of the free energy from the invariance of polymer partition function.

The polymer partition function can be viewed as the positive temperature analogue of the last passage percolation in the zero temperature setting. In the zero temperature case, Greene's theorem \cite{greene1974extension} tells us how to read off the last passage value that starts from the origin from the Robinson–Schensted–Knuth (RSK) correspondence \cite{robinson1938representations, knuth1970permutations,schensted1961longest}. In the positive temperature setting, with the $(\text{max}, +)$ semi-ring replaced by the $(+, \times)$ semi-ring, the geometric RSK correspondence arises as a natural analogue of the RSK correspondence. There are different proofs of the positive-temperature analogue of the Greene's theorem under the geometric RSK in the literature. The first can be found in the paper by Noumi and Yamada \cite{noumi2004tropical}. The proofs by Ivan Corwin and Konstantin Matveev can be found in \cite{corwin2020invariance}.

We will not work directly the discrete geometric Pitman transform operator $\W$ defined in \cite[Definition 2.3]{corwin2020invariance}. Instead, we introduce a variant of $\W$, which we denote by $W$. The operator $\W$ acts on any $n$-tuple of functions $D = (D_1, \cdots, D_n)$ where $D_1, \cdots, D_n: \Z_{\geq 1} \rightarrow (0, \infty)$. These functions should be viewed as the products of the polymer weight, i.e., $D_i(x) = \prod_{j=1}^x d_{j,n+1-i}$. Then $\W$ outputs an $n$-tuple of functions $\W D = (\W D_1, \cdots, \W D_n)$ where $\W D_i: \Z_{\geq i} \rightarrow (0, \infty)$ for all $i \in \lb 1 , n \rb$. The operator $W$ acts on any $n$-tuple of functions $f = (f_1, \cdots, f_n)$ where $f_1, \cdots, f_n: \Z_{\geq 1} \rightarrow \R$. Let $D = (\exp(f_1), \cdots, \exp(f_n))$. Then $W$ outputs an $n$-tuple of functions $Wf = (\log(\W D_1), \cdots,\log( \W D_n))$ for all $i \in \lb 1 , n \rb$.

The explicit definition for this operator $\W$ will not be needed in the rest of the paper. For readers interested in the precise construction, we refer to \cite{corwin2020invariance} for full details. For the remainder of this paper, we will only invoke how this operator acts on the inverse gamma random variables (introduced in Section 3) and the following adaptation of \cite[Theorem 2.4]{corwin2020invariance} in terms of the operator $W$:

\begin{thm}\label{thm_invariance}
     Let $U = \{(u_i, n)\}_{i \in \lb 1, k\rb}$ and $V = \{(v_i,1)\}_{i \in \lb 1, k \rb}$ be any endpoint pair and define $\Uparrow U := \{(u_i, n \wedge u_i)\}_{i \in \lb 1,k \rb}$. Let $f = (f_1, \cdots, f_n)$ be an $n$-tuple of functions where $f_i: \Z_{\geq 1} \rightarrow \R$ for $i \in \lb 1, n \rb$. Assume that $f_i(0) = 0$ for all $i \in \llbracket 1, n \rrbracket$. Then
    \begin{equation}
        f[U \rightarrow V] = Wf[\Uparrow U \rightarrow V].
    \end{equation}
    In particular, for any $N \in \Z_{\geq 1}$, $\ell \in \lb 1, n \wedge N \rb$, $U = \{(i,n)\}_{i \in \lb 1, \ell \rb}$, and $V = \{(j,1)\}_{j \in \lb N-\ell+1, N \rb}$, we have
    \begin{equation}
        f[U \rightarrow V] = \sum_{i=1}^{\ell} Wf_i(N).
    \end{equation}
\end{thm}

\begin{figure}[ht]
  \centering

\begin{tikzpicture}[>=Stealth, 
                    every node/.style={font=\footnotesize},
                    scale=1.0]

\begin{scope}
    \draw[step=1.0, gray!200, dotted] (0,0) grid (5,4);

    \node[left] at (0,0) {$f_5$};
    \node[left]       at (0,1) {$f_4$};
    \node[left]       at (0,2) {$f_3$};
    \node[left]       at (0,3) {$f_2$};
    \node[left]       at (0,4) {$f_1$};

    \draw[thick]
        (0,0) coordinate[pos=0.5](pi1) -- (0,1) 
        -- (1,1) -- (1,2) 
        -- (2,2) -- (2,3)
        -- (3,3) -- (3,4) coordinate[pos=0.5](pi2);

    \draw[thick]
        (2,0) coordinate[pos=0.5](pi3)-- (3,0) 
        -- (3,1) -- (3,2) 
        -- (4,2) -- (5,2)
        -- (5,3) -- (5,4) coordinate[pos=0.5](pi4);

    \node[above right] at (0,0) {$(1,5)$};
    \node[above right] at (pi2) {$(4,1)$};
    \node[above right] at (2,0) {$(2,5)$};
    \node[above right] at (pi4) {$(6,1)$};
    
    \fill (2,0) circle (2pt);
    \fill (0,0) circle (2pt);
    \fill (5,4) circle (2pt);
    \fill (3,4) circle (2pt);
  \draw[->] (0.9, -0.4) -- (0,-0.1) ;
    \draw[->] (1.1, -0.4) -- (2,-0.1) ;
    \node[below, yshift = -3mm] at (1,0) {$U$};
    \node[above, yshift = 3mm]      at (4,4) {$V$};
 \draw[->] (3.9, 4.4) -- (3,4.1) ;
    \draw[->] (4.1, 4.4) -- (5,4.1);

\end{scope}

\begin{scope}[xshift=7cm] 
    \draw[step=1.0, gray!200, dotted] (0,0) grid (5,4);



      \draw[line width=1mm, white] (0,0) -- (0,4);
      \draw[line width=3mm, white] (0,0) -- (4,0);
      \draw[line width=3mm, white] (1,0) -- (1,3);
      \draw[line width=3mm, white] (0,1) -- (3,1);
      \draw[line width=3mm, white] (0,2) -- (2,2);
      \draw[line width=3mm, white] (0,3) -- (1,3);
      \draw[line width=3mm, white] (2,0) -- (2,2);
      \draw[line width=3mm, white] (3,0) -- (3,1);

      \draw[thick]
        (0,4) -- (1,4)  -- (2,4)
        -- (3,4);
        \draw[thick]
        (2,2) -- (3,2)  -- (4,2)
        -- (4,3) -- (4,4) -- (5,4);

    \fill (2,2) circle (2pt);
    \fill (0,4) circle (2pt);
    \fill (3,4) circle (2pt);
    \fill (5,4) circle (2pt);

\node[below right] at (0,4) {$(1,1)$};
    \node[below right] at (2,2) {$(3,3)$};
    \node[below right] at (3,4) {$(4,1)$};
    \node[below right] at (5,4) {$(6,1)$};
    \node[left]       at (0,4) {$(Wf)_1$};
    \node[left]       at (1,3) {$(Wf)_2$};
    \node[left]       at (2,2) {$(Wf)_3$};
    \node[left]       at (3,1) {$(Wf)_4$};
    \node[left]       at (4,0) {$(Wf)_5$};
    \draw[->] (-0.4, 3.1) -- (0,3.9) ;
    \draw[->] (-0.4, 3) -- (2,2.1) ;
    \node[left, xshift = -3mm] at (0,3) {$\Uparrow U$};
    \node[above, yshift = 3mm]      at (4,4) {$V$};
 \draw[->] (3.9, 4.4) -- (3,4.1) ;
    \draw[->] (4.1, 4.4) -- (5,4.1);
\end{scope}
\end{tikzpicture}
  \caption{An example of Theorem \ref{thm_invariance} when $n = 5$ and $k = 2$. The $i$-th row of the grid is associated with function $f_i$ on the left and $(Wf)_i$ on the right. The illustrated paths are possible non-intersecting paths from $U$ to $V$ on the left and $\Uparrow U$ to $V$ on the right.}
  \label{fig:tikzgraph}
\end{figure}
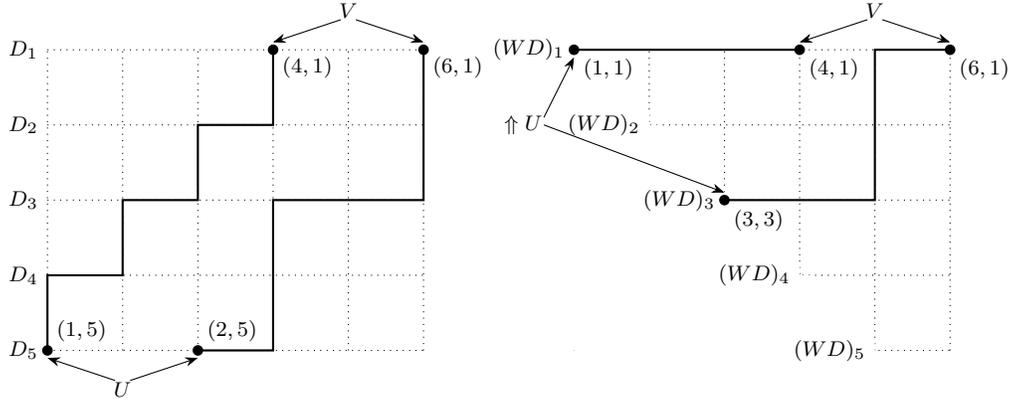

\subsection{Basic properties of discrete free energy}
The aim of this section is to establish a connection between the free energy $Wf[(x, n \wedge x) \rightarrow (y,k+1)]$, which depends solely on the information of the lower curves, and certain observables associated with the top $k+1$ curves of $Wf$ and $W(R_yf)$, to be defined below.

\begin{define}
    For any sequences of positive real numbers $d=\{d_{i,j}\}_{i \in \Z_{\geq 1}, j \in \lb 1,n \rb}$ and $z \in \Z_{\geq 1}$, we define the reverse environment $\{(R_zd)_{i,j}\}_{i \in \lb 1 , z \rb, j \in \lb 1, n \rb}$ by
    \begin{equation}
        (R_zd)_{i,j} = d_{z+1 - i, n + 1-j}
    \end{equation}
    for all $i \in \lb 1,z \rb$ and $j \in \lb 1, n \rb$.
\end{define}
Let $U= \{(x_i, \ell_i)\}_{i \in \lb 1, k \rb}$ and $V = \{(y_i,m_i)\}_{i \in \lb 1, k \rb}$ be an endpoint pair with $U,V \subset \lb 1, z \rb \times \lb 1, n \rb$. Let 
\begin{equation}
R_z U := \{(z + 1-x_{i}, n+1 - \ell_{i})\}_{{i \in \lb 1, k \rb}} , \quad
R_z V := \{(z+1-y_{i}, n+1 - m_{i}\}_{i \in \lb 1, k \rb}.   
\end{equation}
Lastly, let $\tilde{U}= \{(x_i, n-\ell_i+1)\}_{i \in \lb 1, k \rb}$, $\tilde{V} = \{(y_i,n-m_i+1)\}_{i \in \lb 1, k \rb}$, $R_z \tilde{U} = \{(z + 1-x_{i}, \ell_{i})\}_{{i \in \lb 1, k \rb}}$, and $R_z \tilde{V} = \{(z + 1-y_{i}, m_{i})\}_{{i \in \lb 1, k \rb}}$.

\begin{lemma}\label{lemma_reverse_map}
Under the setting above, let $Z[\tilde{U} \rightarrow \tilde{V}]$ to be the polymer partition function with respect to $d=\{d_{i,j}\}_{i \in \Z_{\geq 1}, j \in \lb 1,n \rb}$ and let $(R_zZ)[\tilde{U} \rightarrow \tilde{V}]$ to be the polymer partition function with respect to $\{(R_zd)_{i,j}\}_{i \in \lb 1 , z \rb, j \in \lb 1, n \rb}$. Then,
    \begin{equation}\label{2}
     Z[\tilde{U} \rightarrow \tilde{V}] = (R_zZ)[R_z \tilde{U} \rightarrow R_z \tilde{V}].  
    \end{equation}
    Let $f = (f_1, \cdots, f_n)$ be an $n$-tuple of functions where $f_i: \lb 1 ,z \rb \rightarrow \R$ for $i \in \lb 1, n \rb$. Assume that $f_i(0) = 0$ for all $i \in \llbracket 1, n \rrbracket$. Let $d_{i,j} = \frac{\exp(f_{n+1-j}(i))}{\exp(f_{n+1-j}(i-1))}$ for all $i,j \in \lb 1, z \rb \times \lb 1, n \rb$. We define $R_zf=((R_zf)_1, \cdots, (R_zf)_n): \lb 0, z \rb^n \rightarrow \R^n$ by $(R_zf)_i(x) = \log \prod_{j=1}^x (R_zd)_{j,n+1-i}$ and $(R_zf)_i(0) = 0$ for all $i \in \lb 1, n \rb$ and $x \in \lb 1, z \rb$. Then we have
    \begin{equation}\label{3}
        f[U \rightarrow V] = (R_zf)[R_z U \rightarrow R_z V].
    \end{equation}
\end{lemma}

\begin{proof}
    Since the reverse map $R_z$ can simply be viewed as rotating the grid by $180$ degrees, Equation \ref{2} follows directly. Equation \ref{3} follows from Remark \ref{rmk_equivalence_of_Z_and_f}.
\end{proof}

We also need a reverse version of the free energy. In Definition \ref{def_free_energy}, the free energy is expressed in terms of up-right paths. In the reverse setting, the paths travel in down-right direction. However, the notion of ``allowable path'' is slightly different. Thus, instead of working with lattice paths, it is more convenient to describe the configuration in terms of the turning points $t_i$, allowing for a cleaner formulation of Lemma \ref{lemma_searrow}.

\begin{define}\label{def_reverse_free_energy}
    For $z,w \in \Z_{\geq 1}$, $z \leq w$ and $k+1 \in \lb 1 , n \rb$, let
    \begin{equation}
    \begin{split}
        f[(z,1) {\searrow} (w,k+1)] :=  \quad   -\log \sum_{z < t_1 < \cdots < t_k \leq w} \exp\left( f_1(z) - f_{k+1}(w) + \sum_{i=1}^k \biggl( f_{i+1}(t_i) - f_i(t_i - 1) \biggr) \right).
    \end{split}
\end{equation}
\end{define}

With the reverse free energy defined, we can now state the precise relationship between the free energy on the lower curves (i.e., the $(k+1)$-st through $n$-th curves) and the top $k+1$ curves.
\begin{lemma}\label{key1}
    For $x,y \in \Z_{\geq 1}$, $k \in \lb 1, n-1 \rb$, and $k < x$, $x < y-k+1$, we have
    \begin{equation}\label{equality}
    Wf[(x,n \wedge x) \rightarrow (y,k+1)]  = (Wf)_{k+1}(y) - W(R_y f)[(y-x+1,1) {\searrow} (y,k+1)]).
    \end{equation}
\end{lemma}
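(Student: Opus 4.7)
The plan is to compute a single $(k+1)$-multipath partition function on $f$ in two different ways and equate the results. I would set $U = \{(1,n), (2,n), \ldots, (k,n), (x,n)\}$, $V = \{(y-k,1), (y-k+1,1), \ldots, (y,1)\}$, and $A := f[U \xrightarrow{1} V]$; the hypotheses $k < x$ and $x < y-k+1$ make $(U,V)$ a valid endpoint pair with $k+1$ distinct points on each side.

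First, I would apply Corollary \ref{cor_finvariance} directly to get $A = Wf[\Uparrow U \xrightarrow{1} V]$, where $\Uparrow U = \{(1,1), (2,2), \ldots, (k,k), (x, n\wedge x)\}$. Because each $Wf_i$ is supported on $\Z_{\geq i}$, the staircase starts $(i,i)$ combined with the consecutive endpoints in $V$ pin the first $k$ paths to rigid trajectories along the left boundary, while the $(k+1)$-st path (forced to $(y,1)$ by the $\preceq$-matching) is pinned to column $y$ on every curve between $1$ and $k$, leaving freedom only on curves $k+1, \ldots, n \wedge x$. Telescoping the resulting free energies yields
\begin{equation*}
A = \sum_{i=1}^{k} Wf_i(y) + Wf[(x, n \wedge x) \xrightarrow{1} (y, k+1)].
\end{equation*}

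Next, using Lemma \ref{lemma_reverse_map} and the $180^\circ$ rotation, I would rewrite $A = (R_y f)[\tilde V_y \xrightarrow{1} \tilde U_y]$ with $\tilde V_y = \{(1,n), \ldots, (k+1,n)\}$ and $\tilde U_y = \{(y-x+1,1), (y-k+1,1), \ldots, (y,1)\}$, and then apply Corollary \ref{cor_finvariance} a second time to obtain $A = W(R_y f)[\{(1,1), \ldots, (k+1,k+1)\} \xrightarrow{1} \tilde U_y]$. The single gap in $\tilde U_y$ between $(y-x+1,1)$ and the consecutive right cluster leaves exactly one free jump column $\tau_i$ per path $i = 2, \ldots, k+1$, subject to $y-x+1 < \tau_2 < \cdots < \tau_{k+1} \leq y$. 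A parallel telescoping shows that the $\tau$-independent portion sums to $W(R_y f)_1(y-x+1) + \sum_{i=1}^{k} W(R_y f)_i(y)$, while the $\tau$-dependent portion, after absorbing the boundary factors $\pm W(R_y f)_1(y-x+1)$ and $\pm W(R_y f)_{k+1}(y)$, is exactly the summand defining $W(R_y f)[(y-x+1, 1) \searrow (y, k+1)]$. This gives
\begin{equation*}
A = \sum_{i=1}^{k} W(R_y f)_i(y) + W(R_y f)_{k+1}(y) - W(R_y f)[(y-x+1, 1) \searrow (y, k+1)].
\end{equation*}

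Finally, I would equate the two representations and invoke the symmetry $Wf_i(y) = W(R_y f)_i(y)$ for every $i$—obtained by applying Corollary \ref{cor_finvariance} at consecutive levels $l = i-1, i$ in both $f$ and $R_y f$, matching the resulting $l$-path partition functions via Lemma \ref{lemma_reverse_map}, and subtracting—so that the common $\sum_{i=1}^{k}$ terms cancel and $Wf_{k+1}(y)$ remains on the right, producing the stated equality. The main obstacle is the combinatorial rigidity analysis: in the first representation the consecutive-endpoint structure forces complete rigidity of the top $k$ paths, whereas in the second the single gap in $\tilde U_y$ leaves precisely $k$ free variables whose constraint chain aligns with the summation range in the definition of $\searrow$, and the accompanying telescoping must be executed carefully to recover $\sum_{i=1}^{k} W(R_y f)_i(y)$ from the $\tau$-independent contributions and to match the boundary factors against the definition of the first-passage quantity.
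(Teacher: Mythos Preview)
Your proposal is correct and follows essentially the same route as the paper: the paper also computes $A=f[\{U_{n,k},(x,n)\}\xrightarrow{1}H_{k+1}(y)]$ in two ways, packaging the first computation as Lemma~\ref{lemma_concentration} (your saturation/rigidity step on $Wf$) and the second as Lemma~\ref{lemma_searrow} applied to $R_yf$ (your gap/telescoping step on $W(R_yf)$), and then equates them using $\sum_{i=1}^{k+1}W(R_yf)_i(y)=\sum_{i=1}^{k+1}Wf_i(y)$. Your levelwise identity $Wf_i(y)=W(R_yf)_i(y)$ is slightly stronger than what the paper invokes, but it follows exactly as you indicate from Corollary~\ref{cor_finvariance} at successive levels together with Lemma~\ref{lemma_reverse_map}.
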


The remainder of this section is dedicated to proving Lemma \ref{key1}, which is analogous to \cite[Lemma 5.3]{dauvergne2022directed} and \cite[Proposition 3.5]{wu2023kpz}. For notational convenience, let $U_{n,k} = \{(i,n): i \in \lb 1,k \rb\}$ and $\Uparrow U_{n,k} = \{(i, n \wedge i): i \in \lb 1, k \rb \}$. Here $U_{n,k}$ denotes the bottom horizontal segment consisting of the first $k$ points from the left and $\Uparrow U_{n,k}$ represents its lifted version after the application of the Pitman transform. Let $H_k(y)= \{(i,1): i \in \lb y-k+1,y \rb \}$. Let $V_k(y) = \{(y,i):i \in \lb n-k+1, n\rb \}$. The set $H_k(y)$ denotes the top horizontal segment of $k$ consecutive points with rightmost point $(y,1)$. The set $V_k(y)$ denotes the top vertical segment of $k$ consecutive points with topmost point $(y,1)$. Theorem \ref{thm_invariance} implies a saturation phenomenon for paths when the set of starting points include $U_{n,k}$. We leverage this saturation behavior to establish the following two lemmas, which collectively implies Lemma \ref{key1}.

\begin{lemma}\label{lemma_concentration}
    For $x,y \in \Z_{\geq 1}$, $k \in \lb 1, n-1 \rb$, and $k < x < y$, we have that
    \begin{equation}
    (Wf)[(x,n \wedge x) \rightarrow (y,k+1)] =   f[\{U_{n,k},(x,n)\} \rightarrow H_{k+1}(y)] - \sum_{i=1}^k (Wf)_i(y).
    \end{equation}
\end{lemma}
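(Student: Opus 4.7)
The plan is to first invoke Corollary \ref{cor_finvariance} to transfer the LHS to a multipath free energy on the Pitman-transformed ensemble $Wf$, and then exploit the rigidity (``saturation'') induced by the staircase starting points to decompose the multipath into a trivial piece plus a single path. Since $k+1 \leq n$, we have $n \wedge i = i$ for $i \in \lb 1, k \rb$, so $\Uparrow\{U_{n,k}, (x,n)\} = \{(1,1), (2,2), \ldots, (k,k), (x, n \wedge x)\}$. Applying Corollary \ref{cor_finvariance} with this $U$ and $V = H_{k+1}(y)$ yields
\[
f[\{U_{n,k}, (x,n)\} \xrightarrow1 H_{k+1}(y)] = Wf[\{(1,1), \ldots, (k,k), (x, n \wedge x)\} \xrightarrow1 H_{k+1}(y)].
\]

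To evaluate the right-hand side, observe that because $(x, n \wedge x)$ lies strictly to the right of every staircase vertex (using $x > k$), the non-crossing constraint dictates the pairing $(i,i) \to (y-k+i-1, 1)$ for $i \in \lb 1, k \rb$ together with $(x, n \wedge x) \to (y, 1)$. An induction on $i$, peeling off the top-most path at each step, shows that each $\pi_i$ is rigid: $\pi_i$ traverses row $i$ horizontally from column $i$ to column $a_i := y - k + i - 1$ and then moves vertically up to $(a_i, 1)$. Summing contributions row by row and telescoping using the identity $a_i - 1 = a_{i-1}$ gives
\[
\sum_{i=1}^k Wf(\pi_i) = \sum_{j=1}^k Wf_j(y-1).
\]
Next, the top $k$ paths collectively occupy the columns $\{j, j+1, \ldots, y-1\}$ on each row $j \leq k$. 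Combined with the fact that all columns of $\pi^*$ are $\geq x > k$ and that $\pi^*$ must end at $(y,1)$, non-intersection pins $\pi^*$ to the single vertex $(y, j)$ on each row $j \in \lb 1, k \rb$. Hence $\pi^*$ decomposes uniquely as a single up-right path $\sigma : (x, n \wedge x) \to (y, k+1)$ followed by the forced vertical tail $(y, k+1), (y, k), \ldots, (y, 1)$, and a short telescoping calculation yields $Wf(\pi^*) = \sum_{j=1}^k [Wf_j(y) - Wf_j(y-1)] + Wf(\sigma)$.

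Combining the two pieces, every valid multipath satisfies $\sum_{i=1}^k Wf(\pi_i) + Wf(\pi^*) = \sum_{j=1}^k Wf_j(y) + Wf(\sigma)$, and the multipaths are in bijection with the single paths $\sigma$ from $(x, n \wedge x)$ to $(y, k+1)$. Exponentiating, summing, and taking logs then gives
\[
Wf[\{(1,1), \ldots, (k,k), (x, n \wedge x)\} \xrightarrow1 H_{k+1}(y)] = \sum_{j=1}^k Wf_j(y) + Wf[(x, n \wedge x) \xrightarrow1 (y, k+1)],
\]
which together with the first display is exactly Lemma \ref{lemma_concentration}. The main obstacle is establishing the saturation rigidly: verifying that non-intersection forces each $\pi_i$ into the described horizontal-then-vertical shape, and that $\pi^*$ is thereby constrained to pass through $(y, k+1), (y, k), \ldots, (y, 1)$. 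Once these geometric constraints are made precise, the rest of the argument is bookkeeping through telescoping sums.
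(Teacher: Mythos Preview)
Your proposal is correct and follows essentially the same route as the paper: both apply the invariance (Theorem~\ref{thm_invariance}/Corollary~\ref{cor_finvariance}) to pass to the transformed ensemble, observe that the staircase starting points force saturation on rows $1,\ldots,k$, and then split the multipath weight into a constant factor $\sum_{j=1}^k (Wf)_j(y)$ plus the single free path from $(x,n\wedge x)$ to $(y,k+1)$. The only cosmetic difference is that the paper phrases the constant factor as $\W D[\Uparrow U_{n,k}\to H_k(y)]=\prod_{i=1}^k \W D_i(y)$ via Corollary~\ref{cor_invariance}, whereas you establish individual rigidity of each $\pi_i$ and obtain the same quantity through explicit telescoping; the underlying geometric observation is identical.
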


\begin{lemma}\label{lemma_searrow}
    For $x,y \in \Z_{\geq 1}$, $k \in \lb 1, n-1 \rb$, and $k < x < y-k+1$, we have
    \begin{equation}
    f[U_{n,k+1} \rightarrow \{(x,1), H_k(y)\}] = f[U_{n,k+1} \rightarrow H_{k+1}(y)] - Wf [(x,1) {\searrow} (y,k+1)]
    \end{equation}
\end{lemma}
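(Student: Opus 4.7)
The plan is to apply Theorem~\ref{thm_invariance} to recast the left-hand side as a non-intersecting multipath partition function on the Pitman-transformed environment $Wf$, and then verify the resulting combinatorial identity by a direct enumeration of non-intersecting paths combined with induction on $k$. This mirrors the strategy for Lemma~\ref{lemma_concentration}, with the roles of starting and ending endpoints swapped.

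First, by Theorem~\ref{thm_invariance} together with Lemma~\ref{lemma_function_transform}, using $\Uparrow U_{n,k+1} = \{(i,i)\}_{i=1}^{k+1}$ (since $k+1 \le n$), one has
\[
f[U_{n,k+1} \xrightarrow{1} \{(x,1)\} \cup H_k(y)] \;=\; Wf\bigl[\{(i,i)\}_{i=1}^{k+1} \xrightarrow{1} \{(x,1)\} \cup H_k(y)\bigr].
\]
Corollary~\ref{cor_finvariance} supplies $f[U_{n,k+1} \xrightarrow{1} H_{k+1}(y)] = \sum_{i=1}^{k+1} Wf_i(y)$. Setting $G_i := \exp(Wf_i)$ and unpacking the definition of $\overset{1}{\searrow}$, the target identity is equivalent to the combinatorial statement
\[
D\bigl[U_{n,k+1} \to \{(x,1)\} \cup H_k(y)\bigr] \;=\; G_1(x) \prod_{i=1}^{k} G_i(y) \sum_{x < t_1 < \cdots < t_k \le y} \prod_{i=1}^{k} \frac{G_{i+1}(t_i)}{G_i(t_i-1)}.
\]

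To prove this, I enumerate the non-intersecting multipaths from $\{(i,i)\}_{i=1}^{k+1}$ to $\{(x,1), (y{-}k{+}1,1), \ldots, (y,1)\}$. Since $x < y{-}k{+}1 < \cdots < y$, non-intersection uniquely forces the pairing $(1,1) \to (x,1)$ and $(i{+}1,i{+}1) \to (y{-}k{+}i,1)$ for $i = 1, \ldots, k$. The first path lies in row~$1$ and contributes the factor $G_1(x)$. For each remaining path $\pi_i$, let $t_i$ denote its entry column into row~$1$; avoidance of the first path together with pairwise non-intersection in row~$1$ forces $x < t_1 < \cdots < t_k$ with $t_i \le y{-}k{+}i$. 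The weight of $\pi_i$ splits as the row-$1$ factor $G_1(y{-}k{+}i)/G_1(t_i{-}1)$ times a contribution from rows $2, \ldots, i{+}1$ parameterized by exit columns $r_j^i$, subject to non-intersection constraints among $\pi_1,\ldots,\pi_k$ that cascade through the rows above row~$1$.

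I then sum out the intermediate $r_j^i$'s to obtain the factor $\prod_{i=1}^{k} G_i(y) G_{i+1}(t_i)/G_i(t_i-1)$ per choice of $t_1<\cdots<t_k$, by induction on $k$ in a strengthened form that permits arbitrary column-avoidance constraints in rows above row~$1$. The base case $k=1$ is a direct computation: the single path from $(2,2)$ to $(y,1)$ with entry column $t_1 \in (x,y]$ has weight $G_2(t_1)G_1(y)/G_1(t_1-1)$, using $G_2(1) = 1$ from the convention $WD_i(i-1) = 1$. For the inductive step, I condition on $\pi_1$: its row-$2$ segment from column~$2$ to $t_1$ imposes a row-$2$ avoidance $\le t_1$ on $\pi_2,\ldots,\pi_k$, which together with the induced row-$1$ avoidance $\le y{-}k{+}1$ fits the strengthened hypothesis; applying it produces the desired factor for $i=2,\ldots,k$, and the $\pi_1$ weight supplies the $i=1$ factor. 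The main obstacle I expect is the careful formulation of the strengthened induction, so that the row-by-row cascading of avoidance constraints turns each sub-instance into an allowed instance of the hypothesis, and verifying that the intermediate sums telescope into exactly $\prod_{i=1}^{k} G_i(y)$ with no residual terms; once the hypothesis is properly set up, each individual step reduces to a routine computation in the $Wf$ environment.
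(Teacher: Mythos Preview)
Your overall setup is right — apply Theorem~\ref{thm_invariance} to pass to $\W D$, factor off the row-$1$ path $(1,1)\to(x,1)$, and reduce to a combinatorial identity for the remaining $k$ non-intersecting paths. But your identification of the free parameters is wrong, and this breaks the induction you describe.

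You set $t_i$ to be the entry column of $\pi_i$ into row~$1$. However, the endpoints $(y-k+1,1),\dots,(y,1)$ are consecutive, so non-intersection in row~$1$ forces $t_i > y-k+i-1$; together with $t_i\le y-k+i$ this pins $t_i = y-k+i$ for every $i\ge 2$. Only $t_1$ is free. So your claim ``sum out the $r_j^i$'s to obtain $\prod_{i=1}^k G_i(y)G_{i+1}(t_i)/G_i(t_i-1)$ per choice of $t_1<\cdots<t_k$'' cannot hold: for all but one choice of $(t_2,\dots,t_k)$ the left side is zero while the right side is not. The genuine degrees of freedom are not the row-$1$ entries but the \emph{top} transitions: the entry column of $\pi_i$ into row~$i$ (from row~$i{+}1$). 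Once $t_1$ is fixed, the same consecutive-endpoint argument cascades upward and forces $\pi_i$'s entry into row~$j$ to equal $y-k+i$ for every $j<i$, leaving exactly one free column per path — the row-$i$ entry of $\pi_i$ — and these are precisely the $t_i$ in the target sum.

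The paper's proof exploits this saturation directly rather than by induction: with the correct parameters, the weight of $\pi_i$ is $G_{i+1}(t_i)\cdot \frac{G_i(y-k+i)}{G_i(t_i-1)}\cdot\prod_{j=1}^{i-1}\frac{G_j(y-k+i)}{G_j(y-k+i-1)}$, and telescoping the $G_j$ factors across $i=1,\dots,k$ collapses the product to $\prod_{i=1}^k \frac{G_i(y)}{G_i(t_i-1)}G_{i+1}(t_i)$ in one line. Your inductive scheme can be repaired — peel off $\pi_1$, note that the induced avoidance in row~$2$ is $\le t_1$, and let the new free parameter be $\pi_2$'s row-$2$ entry — but then the inductive variable is exactly the paper's $t_i$, not your row-$1$ entry, and at that point the direct computation is shorter.
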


The proof of Lemma \ref{key1} follows by noting the identity
\[f[\{U_{n,k},(x,n)\} \rightarrow H_{k+1}(y)] = R_yf[U_{n,k+1} \rightarrow \{(y-x+1,1), H_k(y)\}].\]
We then apply Lemma \ref{lemma_searrow} to $R_yf$ and observe that 
\[
R_yf[U_{n,k+1} \rightarrow H_{k+1}(y)] = f[U_{n,k+1} \rightarrow H_{k+1}(y)] = \sum_{i=1}^{k+1} (Wf)_i(y). 
\]

Now we proceed to prove Lemma \ref{lemma_concentration} and Lemma \ref{lemma_searrow}.

\begin{proof}[Proof of Lemma \ref{lemma_concentration}]
Let us apply Theorem \ref{thm_invariance}. 
\[
    \begin{split}
        f[\{U_{n,k},(x,n)\} \rightarrow H_{k+1}(y)]
        =Wf[\{ \Uparrow U_{n,k}, (x,n \wedge x)\} \rightarrow H_{k+1}(y)]).
    \end{split} 
\]
Consider any multipath $\pi = (\pi_1, \cdots, \pi_{k+1})$ such that $\pi_i$ connects $(i, i)$ to $(y-k-1+i,1)$ for $i \in \lb 1 , k \rb$ and $\pi_{k+1}$ that connects $(x, n \wedge x)$ to $(y, 1)$. Notice that the first $k$ rows are saturated by paths and the remaining degrees of freedom lie in the part of $\pi_{k+1}$ that connects $(x, n \wedge x)$ to $(y, k+1)$ as shown in Figure \ref{fig2}.

\begin{figure}[ht]
  \centering

\begin{tikzpicture}[>=Stealth, 
                    every node/.style={font=\footnotesize},
                    scale=1.0]

\begin{scope} 
    \draw[step=1.0, gray!200, dotted] (0,0) grid (5,4);



      \draw[line width=1mm, white] (0,0) -- (0,4);
      \draw[line width=3mm, white] (0,0) -- (4,0);
      \draw[line width=3mm, white] (1,0) -- (1,3);
      \draw[line width=3mm, white] (0,1) -- (3,1);
      \draw[line width=3mm, white] (0,2) -- (2,2);
      \draw[line width=3mm, white] (0,3) -- (1,3);
      \draw[line width=3mm, white] (2,0) -- (2,2);
      \draw[line width=3mm, white] (3,0) -- (3,1);

      \draw[thick]
        (0,4) -- (1,4)  -- (2,4)
        -- (3,4);
        \draw[thick]
        (1,3) -- (2,3)  -- (3,3)
        -- (4,3) -- (4,4);
        \draw[thick]
        (4,0) -- (4,1)  -- (5,1)
        -- (5,2) -- (5,3) -- (5,4);

    \fill (1,3) circle (2pt);
    \fill (1,4) circle (2pt);
    \fill (2,4) circle (2pt);
    \fill (3,4) circle (2pt);
    \fill (2,3) circle (2pt);
    \fill (3,3) circle (2pt);
    \fill (4,3) circle (2pt);
    \fill (5,3) circle (2pt);
    \fill (4,1) circle (2pt);
    \fill (5,1) circle (2pt);
    \fill (0,4) circle (2pt);
    \fill (5,2) circle (2pt);
    \fill (4,4) circle (2pt);
    \fill (5,4) circle (2pt);
    \fill (4,0) circle (2pt);

\node[below right] at (0,4) {$(1,1)$};
    \node[below right] at (1,3) {$(2,2)$};
    \node[below right] at (3,4) {$(4,1)$};
    \node[below right] at (4,4) {$(5,1)$};
    \node[below right] at (5,4) {$(6,1)$};
    \node[below right] at (4,0) {$(5,5)$};

\end{scope}

\begin{scope}[xshift=7cm] 
    \draw[step=1.0, gray!200, dotted] (0,0) grid (5,4);



      \draw[line width=1mm, white] (0,0) -- (0,4);
      \draw[line width=3mm, white] (0,0) -- (4,0);
      \draw[line width=3mm, white] (1,0) -- (1,3);
      \draw[line width=3mm, white] (0,1) -- (3,1);
      \draw[line width=3mm, white] (0,2) -- (2,2);
      \draw[line width=3mm, white] (0,3) -- (1,3);
      \draw[line width=3mm, white] (2,0) -- (2,2);
      \draw[line width=3mm, white] (3,0) -- (3,1);

      \draw[thick]
        (0,4) -- (1,4)  -- (2,4)
        -- (3,4) -- (4,4);
        \draw[thick]
        (1,3) -- (2,3)  -- (3,3) 
        -- (4,3) -- (5,3) -- (5,4) ;
        \draw[thick]
        (4,0) -- (4,1)  -- (5,1)
        -- (5,2);

    \fill (1,3) circle (2pt);
    \fill (1,4) circle (2pt);
    \fill (2,4) circle (2pt);
    \fill (3,4) circle (2pt);
    \fill (2,3) circle (2pt);
    \fill (3,3) circle (2pt);
    \fill (4,3) circle (2pt);
    \fill (5,3) circle (2pt);
    \fill (4,1) circle (2pt);
    \fill (5,1) circle (2pt);
    \fill (0,4) circle (2pt);
    \fill (5,2) circle (2pt);
    \fill (4,4) circle (2pt);
    \fill (5,4) circle (2pt);
    \fill (4,0) circle (2pt);

\node[below right] at (0,4) {$(1,1)$};
    \node[below right] at (1,3) {$(2,2)$};
    \node[below right] at (5,2) {$(6,3)$};
    \node[below right] at (4,4) {$(5,1)$};
    \node[below right] at (5,4) {$(6,1)$};
    \node[below right] at (4,0) {$(5,5)$};

\end{scope}
\end{tikzpicture}
  \caption{An example of a multipath reorganization when $k = 2$, $y = 6$, and $x = 5$. The diagram on the right illustrates a reorganization of the multipath in the left diagram. The first $k$ rows 
are fully saturated by paths and the only remaining degree of freedom lies in the segment 
of $\pi_{k+1}$ connecting $(x, n \wedge x)$ to $(y, k+1)$. Importantly, the free energy 
is unchanged under this reorganization, as what matters is the collection of points 
visited by the multipath.}

  \label{fig2}
\end{figure}

Let $\pi^{\text{Tr}}_{k+1}$ denote the truncated path $\pi_{k+1}$ from $(x, n \wedge x)$ to $(y,k+1)$. Thus, the free energy $W f[\{ \Uparrow U_{n,k}, (x,n \wedge x)\} \rightarrow H_{k+1}(y)]$ factors nicely:
\[
    \begin{split}
        Wf[\{ \Uparrow U_{n,k}, (x,n \wedge x)\} \rightarrow H_{k+1}(y)])
        &= \log \sum_{\pi \in \Pi[\{ \Uparrow U_{n,k}, (x,n \wedge x)\} \rightarrow H_{k+1}(y)]}  \exp(Wf(\pi))\\
        &=  \log \sum_{\pi \in \Pi[\{ \Uparrow U_{n,k}, (x,n \wedge x)\} \rightarrow H_{k+1}(y)]}  \exp(Wf(\pi^{\text{Tr}}_{k+1}) + \sum_{i=1}^k Wf(y)))\\
        & = \sum_{i=1}^k (Wf)_i(y)+  Wf[(x,n \wedge x) \rightarrow (y,k+1)].
    \end{split}
\]
\end{proof}
\begin{proof}[Proof of Lemma \ref{lemma_searrow}]
Again, we apply Theorem \ref{thm_invariance}:
\[
\begin{split}
    f[U_{n,k+1} \rightarrow \{(x,1), H_k(y)\}]= Wf[\Uparrow U_{n,k+1} \rightarrow \{(x,1), H_k(y)\}].
\end{split}
\]
Consider any multipath $\pi = (\pi_1, \cdots, \pi_{k+1})$ such that $\pi_1$ connects $(1,1)$ to $(x,1)$ and $\pi_i$ connects $(i,i)$ to $(y-k+i-1,1)$ for all $i \in \lb 2, k+1 \rb$. Due to the non-intersecting property, we know that $\pi_i$ must pass through $(y-k+i-1, i-1)$ then move vertically upward. Thus, there exists a bijection between $\Pi[\Uparrow U_{n,k+1} \rightarrow \{(x,1), H_k(y)\}]$ and $\{(t_0, t_1, \cdots, t_k) \in \lb x, y \rb^{k+1}: x = t_0 < t_1 < t_2 < \cdots < t_k \leq y\}$ where $\pi_{i+1}$ travels along $(i+1)$-th row until $(t_i, i+1)$, where it turns upward, for all $i \in \lb 1, k \rb$. Then,
\[
\begin{split}
    &Wf[\Uparrow U_{n,k+1} \rightarrow \{(x,1), H_k(y)\}] \\
    &=\log \sum_{\pi \in \Pi[\Uparrow U_{n,k+1} \rightarrow \{(x,1),H_{k}(y)\}]}  \exp(Wf(\pi))\\
    &=\log \sum_{x = t_0 < t_1 < \cdots < t_k \leq y}  \exp\left( Wf_{k+1}(t_k) + \sum_{i=1}^k Wf_i(y) - Wf_i(t_i-1) + Wf_i(t_{i-1})\right) \\
    &=\sum_{i=1}^{k+1} Wf_i(y) + \log \sum_{x < t_1 < \cdots < t_k \leq y}  \exp \left( Wf_1(x) -Wf_{k+1}(y) + \sum_{i=1}^k  Wf_{i+1}(t_i)- Wf_i(t_i-1)\right) \\  
    &= f[U_{n,k+1} \rightarrow H_{k+1}(y)] - Wf[(x,1) \searrow (y,k+1)].
    \end{split}
\]
The last line follows from Theorem \ref{thm_invariance} and Definition \ref{def_reverse_free_energy}.
\end{proof}

\subsection{Variational formula and monotonicity}
While the positive temperature polymer models do not satisfy a direct metric composition law for their free energy, they admit the following variational formula. 
\begin{lemma}\label{Bisection}
    For any $m,k,\ell \in \lb 1,n \rb$, $m \leq k < \ell$, $x, y \in \Z_{\geq 1}$ and $x < y$, we have 
    \begin{equation}\label{equation_variational_f}
    \exp(f[(x,\ell) \rightarrow (y,m)]) = \sum_{i=x}^y \exp(f[(x,\ell) \rightarrow (i,k+1)] + f[(i,k) \rightarrow (y,m)]).
    \end{equation}
\end{lemma}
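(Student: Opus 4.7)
The plan is to reduce the identity to a combinatorial decomposition of up-right paths, paralleling the max-plus (metric composition) identity in zero-temperature models. First, I would unpack each exponentiated free energy as a partition function. By Definition \ref{def_free_energy} (equivalently, Lemma \ref{lemma_function_transform}),
\[
\exp\bigl(f[U \xrightarrow{1} V]\bigr) = \sum_{\pi \in \Pi[U \rightarrow V]} \exp\bigl(f(\pi)\bigr),
\]
so it suffices to exhibit a weight-preserving bijection
\[
\Pi[(x,\ell) \to (y,m)] \;\longleftrightarrow\; \bigsqcup_{i=x}^{y} \Pi[(x,\ell) \to (i,k+1)] \times \Pi[(i,k) \to (y,m)].
\]

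Next I would split a path $\pi \in \Pi[(x,\ell) \to (y,m)]$ at the transition from level $k+1$ to level $k$. Using the injective encoding $\pi \leftrightarrow (t_\ell,\ldots,t_{m+1})$ with the conventions $t_{\ell+1}=x$ and $t_m=y$, monotonicity gives $x = t_{\ell+1} \leq t_\ell \leq \cdots \leq t_{m+1} \leq t_m = y$, so $i := t_{k+1} \in [x,y]$ is well defined and picks out the unique column at which $\pi$ descends from level $k{+}1$ to level $k$. The subtuple $(t_\ell,\ldots,t_{k+2})$ paired with $t_{k+1}=i$ encodes a path $\pi_1 \in \Pi[(x,\ell) \to (i,k+1)]$, while $(t_k,\ldots,t_{m+1})$ paired with $t_{k+1}=i$ encodes $\pi_2 \in \Pi[(i,k) \to (y,m)]$. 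Conversely, any triple $(i,\pi_1,\pi_2)$ with $i \in [x,y]$ glues back into a unique up-right path in $\Pi[(x,\ell) \to (y,m)]$, establishing the bijection.

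Then I would verify additivity of the weight. Splitting the telescoping sum in Definition \ref{def_free_energy} at index $k$,
\[
f(\pi) = \sum_{j=m}^{\ell}\!\bigl(f_j(t_j) - f_j(t_{j+1} - 1)\bigr) = \sum_{j=k+1}^{\ell}\!\bigl(f_j(t_j) - f_j(t_{j+1} - 1)\bigr) + \sum_{j=m}^{k}\!\bigl(f_j(t_j) - f_j(t_{j+1} - 1)\bigr) = f(\pi_1) + f(\pi_2),
\]
where $t_{k+1}=i$ appears once as the level-$(k+1)$ endpoint of $\pi_1$ and once as the level-$k$ starting column of $\pi_2$, and no $f_j$ is shared between the two summands. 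Interchanging sums,
\[
\sum_{\pi} \exp\bigl(f(\pi)\bigr) = \sum_{i=x}^{y} \Bigl(\sum_{\pi_1}\exp(f(\pi_1))\Bigr)\Bigl(\sum_{\pi_2}\exp(f(\pi_2))\Bigr) = \sum_{i=x}^{y}\exp\bigl(f[(x,\ell)\xrightarrow{1}(i,k+1)] + f[(i,k)\xrightarrow{1}(y,m)]\bigr),
\]
which is the claimed identity.

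The argument is structurally routine; the only care-point is the boundary column $i$, where one must check that no $f_j$ contribution is double counted or missed. Since $f_{k+1}$ contributes only to $f(\pi_1)$ (through $f_{k+1}(i) - f_{k+1}(t_{k+2}-1)$) and $f_k$ contributes only to $f(\pi_2)$ (through $f_k(t_k) - f_k(i-1)$), the split is clean. No probabilistic input is required — this is purely the positive-temperature shadow of the obvious fact that any up-right path crosses the interface between two adjacent horizontal levels at exactly one column.
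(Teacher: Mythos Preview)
Your proof is correct and follows essentially the same approach as the paper: both decompose the sum over paths $\pi\in\Pi[(x,\ell)\to(y,m)]$ according to the unique column $i=t_{k+1}$ at which the path crosses from level $k{+}1$ to level $k$, and then factor the weight as $\exp(f(\pi))=\exp(f(\pi_1))\exp(f(\pi_2))$. The paper's version is a terse three-line display, while you spell out the bijection and the clean split of the $f_{k+1}$ and $f_k$ contributions at the boundary column, but the underlying argument is identical.
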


\begin{proof}
    \[
    \begin{split}
        \exp(f[(x,\ell) \rightarrow (y,m)]) &=
        \sum_{\pi \in \Pi[(x,\ell) \rightarrow (y,m)]} \exp(f(\pi))\\
        &= \sum_{i=x}^y \sum_{\pi \in \Pi[(x,\ell) \rightarrow (i,k+1)]}\sum_{\tilde{\pi} \in \Pi[(i,k) \rightarrow  (y,m)]} \exp(f(\pi))\exp(f(\tilde{\pi}))\\
        &= \sum_{i=x}^y \exp(f[(x,\ell) \rightarrow (i,k+1)] + f[(i,k) \rightarrow(y,m)])
    \end{split}
    \]
\end{proof}
The variational formula thus provides a crucial mechanism for distinguishing the asymptotically dominant contributions: in the convergence to the Airy sheet, only the contribution from the top part of the free energy survives in the limit, while the lower part becomes negligible. We now establish a monotonicity property of the free energy, which will later be used to show that the contribution from the lower part is negligible in the scaling limit. Intuitively, this monotonicity implies that path coalescence is energetically more favorable. The zero-temperature analogue of such monotonicity follows directly from definition while the positive temperature semi-discrete analogue can be found in \cite[Lemma 2.4]{wu2023kpz}.

\begin{lemma}\label{lemma_monotonicity_freeenergy}
    For any $m,\ell \in \lb 1, n \rb$ where $m \leq \ell$, $x_1,x_2 \in \Z_{\geq 1}$ where $x_1 \leq x_2$, the following inequality holds for any $y_1, y_2 \in \Z_{\geq x_2}$ where $y_1 \leq y_2$:
    \begin{equation}
    f[(x_2, \ell) \rightarrow (y_1,m)] - f[(x_1, \ell) \rightarrow (y_1,m)] \leq  f[(x_2, \ell) \rightarrow (y_2,m)] - f[(x_1, \ell) \rightarrow (y_2,m)].
    \end{equation}
\end{lemma}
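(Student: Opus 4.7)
The plan is to exponentiate and reduce the stated inequality to a classical log-supermodularity (Lindstr\"om--Gessel--Viennot type) statement for the partition functions $D[U \to V]$. Writing $D_i = \exp(f_i)$ as in Lemma \ref{lemma_function_transform}, the inequality becomes
\[
D[(x_2, \ell) \to (y_1, m)] \cdot D[(x_1, \ell) \to (y_2, m)] \leq D[(x_1, \ell) \to (y_1, m)] \cdot D[(x_2, \ell) \to (y_2, m)].
\]
Expanding each partition function as a sum over up-right paths weighted by $\prod d_{i,j}$, it suffices to exhibit a weight-preserving injection from ordered path pairs $(\pi_a, \pi_b) \in \Pi[(x_2, \ell) \to (y_1, m)] \times \Pi[(x_1, \ell) \to (y_2, m)]$ (the \emph{crossed} pairs on the left) into ordered path pairs $(\pi_a', \pi_b') \in \Pi[(x_1, \ell) \to (y_1, m)] \times \Pi[(x_2, \ell) \to (y_2, m)]$ (the \emph{nested} pairs on the right).

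The key geometric input is that, because $x_1 \leq x_2 \leq y_1 \leq y_2$, the endpoints of a crossed pair $(\pi_a, \pi_b)$ are interleaved, which forces every such pair to share at least one vertex. Moreover, since up-right paths are monotone, the shared vertices of $\pi_a$ and $\pi_b$ form a chain in the coordinate-wise partial order and appear in the same order along each path; in particular, a canonical first shared vertex $p^*$ is well defined. The injection is then the standard tail swap at $p^*$: set $\pi_a'$ to be the portion of $\pi_b$ from its start to $p^*$ followed by the portion of $\pi_a$ from $p^*$ to its end, and $\pi_b'$ to be the portion of $\pi_a$ from its start to $p^*$ followed by the portion of $\pi_b$ from $p^*$ to its end. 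The resulting pair lies in the nested collection, and since we have only rearranged the same multiset of vertices, the product of weights is preserved.

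The main obstacle, and the only nontrivial point, is injectivity of this swap. I would verify it by checking that $(\pi_a', \pi_b')$ has exactly the same set of shared vertices as $(\pi_a, \pi_b)$; this relies on the fact that each $\pi_a, \pi_b$ is a simple monotone path, so the swap cannot create new crossings. Consequently $p^*$ is again the first shared vertex of the image pair and the swap is an involution on its image. Summing $d(\pi_a) d(\pi_b) = d(\pi_a') d(\pi_b')$ over the crossed pairs reproduces the left-hand product, while the right-hand product additionally picks up contributions from nested pairs that share no vertex; this gives the desired inequality. The argument uses only positivity of the weights $d_{i,j}$, so no probabilistic input is required.
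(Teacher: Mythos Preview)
Your proof is correct and takes a genuinely different route from the paper's. The paper argues by induction on $\ell - m$: the base case $\ell = m$ is immediate since $f[(x,m)\xrightarrow1(y,m)] = f_m(y) - f_m(x-1)$, and for the inductive step it uses the variational formula (Lemma~\ref{Bisection}) to peel off row $m$, rewriting each free energy as a sum over the column where the path leaves row $m+1$; the desired inequality then reduces to an elementary algebraic estimate that invokes the hypothesis at level $m+1$. You instead exponentiate once and prove the partition-function inequality directly by a Lindstr\"om--Gessel--Viennot tail swap at the first shared vertex, which exists because the crossed endpoints $x_1\le x_2\le y_1\le y_2$ force every pair of monotone paths to meet. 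Your bijective argument is cleaner conceptually and immediately yields Lemma~\ref{lemma_monotonicity_partitionfunction} (which the paper obtains only as a corollary), while the paper's inductive proof stays entirely in the free-energy language and reuses the same variational decomposition that drives the later Airy-sheet analysis. Both approaches use only positivity of the weights.
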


\begin{proof}
    The proof is by induction on $\ell -m$. When $\ell = m$, the lemma follows trivially since $f[(x,m) \rightarrow (y,m)] = f_m(y) - f_m(x-1)$. Fix $\ell$ and assume that the inequality holds for $m+1$, and we will prove it for $m$. It suffices to consider $y_1 = y$ and $y_2 = y+1$. Let us rewrite the free energy using the variational formula (\ref{equation_variational_f}):
    \begin{equation}
        \exp(f[(x,\ell) \rightarrow (y,m)]) = \sum_{i=x}^y \exp(f[(x,\ell) \rightarrow (i,m+1)]+f_m(y)-f_m(i-1)).
    \end{equation}
    For $i,j \in \{1,2\}$, let
    \begin{equation}
        A(x_i,x_j) = \sum_{i=x_i}^y \exp(f[(x_j,\ell) \rightarrow{} (i,m+1)] - f_m(i-1))
    \end{equation}
    \begin{equation}
        B(x_i) = \exp(f[(x_i, \ell) \rightarrow (y+1,m+1)] - f_m(y)).
    \end{equation}
    By the variational formula, the inequality for $m$ is just
    \begin{equation}
        \frac{A(x_2,x_2)+B(x_2)}{A(x_1,x_1)+B(x_1)} \geq \frac{A(x_2,x_2)}{A(x_1,x_1)},
    \end{equation}
    which is equivalent to
    \begin{equation}
    \frac{A(x_1,x_1)}{A(x_2,x_2)} \geq \frac{B(x_1)}{B(x_2)}.   
    \end{equation}
    This follows by the induction hypothesis and the fact that every summand in $A(x_i,x_j)$ is non-negative. 
    \[
    \begin{split}
         A(x_2,x_2)
         &= \sum_{i=x_2}^{y} \exp(f[(x_2, \ell) \rightarrow (i,m+1)] - f[(x_1, \ell) \rightarrow (i,m+1)]+ f[(x_1, \ell) \rightarrow (i,m+1)]- f_m(i-1)) \\
         &\leq \exp(f[(x_2, \ell) \rightarrow (y+1,m+1)] -f[(x_1, \ell) \rightarrow (y+1,m+1)]) A(x_2,x_1)\\
         &\leq \exp(f[(x_2, \ell) \rightarrow (y+1,m+1)] -f[(x_1, \ell) \rightarrow (y+1,m+1)]) A(x_1, x_1).
    \end{split}
\]
\end{proof}

Similarly, we have the following quadrangle inequality for the partition function of the polymer model, which follows from Lemma \ref{lemma_monotonicity_freeenergy} by exponentiating both sides. 

\begin{lemma}\label{lemma_monotonicity_partitionfunction}
    For any $x_1, x_2, y_1, y_2 \in \Z$ where $x_1 \leq x_2 \leq y_1 \leq y_2$, we have
    \begin{equation}
    Z[(x_1, 1) \rightarrow 
    (y_2,n)]\cdot Z[(x_2, 1) \rightarrow 
    (y_1,n)] \leq Z[(x_2, 1) \rightarrow 
    (y_2,n)] \cdot Z[(x_1, 1) \rightarrow 
    (y_1,n)]
    \end{equation}
\end{lemma}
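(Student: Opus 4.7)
The plan is to obtain this as a direct corollary of Lemma \ref{lemma_monotonicity_freeenergy} by exponentiation, as the sentence preceding the statement already suggests. The substance of the argument has already been carried out in the inductive proof of Lemma \ref{lemma_monotonicity_freeenergy}, which uses the variational formula (\ref{equation_variational_f}) to peel off the bottom curve and reduce to the case of one fewer row. So the work here is purely a change of variables from additive (free energy) form to multiplicative (partition function) form.

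First, I would invoke Lemma \ref{lemma_monotonicity_freeenergy} with $\ell = n$ and $m = 1$. The hypothesis chain $x_1 \leq x_2 \leq y_1 \leq y_2$ assumed in Lemma \ref{lemma_monotonicity_partitionfunction} matches exactly what is required there, namely $x_1 \leq x_2$ and $y_1, y_2 \in \Z_{\geq x_2}$ with $y_1 \leq y_2$. (A harmless translation lets us assume $x_1 \geq 1$; this is permissible because $D[(x,n) \to (y,1)]$ depends only on the environment in the rectangle $[x, y] \times \lb 1, n \rb$.) This produces the inequality
\[
f[(x_2, n) \xrightarrow{1} (y_1, 1)] - f[(x_1, n) \xrightarrow{1} (y_1, 1)] \leq f[(x_2, n) \xrightarrow{1} (y_2, 1)] - f[(x_1, n) \xrightarrow{1} (y_2, 1)],
\]
which I would rearrange into the equivalent additive quadrangle form
\[
f[(x_1, n) \xrightarrow{1} (y_2, 1)] + f[(x_2, n) \xrightarrow{1} (y_1, 1)] \leq f[(x_1, n) \xrightarrow{1} (y_1, 1)] + f[(x_2, n) \xrightarrow{1} (y_2, 1)].
\]

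Finally, I would apply Lemma \ref{lemma_function_transform} with $f_i = \log D_i$, where $D_i(j) = \prod_{k=1}^{j} d_{k,i}$ is built from the polymer weights as in Definition \ref{def_Dweight}, so that each $f[U \xrightarrow{1} V] = \log D[U \to V]$. Substituting this identification into the four terms above and exponentiating both sides yields precisely the claimed multiplicative inequality. I do not expect any real obstacle: the only things to check are that the monotonicity hypotheses line up and that the coordinate translation to $\Z_{\geq 1}$ is legitimate, both of which are routine.
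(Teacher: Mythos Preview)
Your proposal is correct and matches the paper's own justification exactly: the paper states that Lemma \ref{lemma_monotonicity_partitionfunction} ``follows from Lemma \ref{lemma_monotonicity_freeenergy} by exponentiating both sides,'' which is precisely the argument you outline via Lemma \ref{lemma_function_transform}.
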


\section{From free energy to last passage percolation}\label{section3}
For this section, we fix a positive integer $N$ and will address the regime $N \rightarrow \infty$ in the next section. Recall that we have defined the polymer partition functions on $\Z^2$ with respect to $d = \{d_{i,j}\}_{i,j \in \Z}$ according to Definition \ref{def_polymer_partition}. On the domain $\lb 1, 3N \rb \times \lb 1 , 2N \rb$, we define $f^N = (f_1^N, \cdots, f_{2N}^N)$ by setting $f_i^N(x) = \log(\prod_{j=1}^x d_{j,2N+1-i})$ for all $(x,i) \in \lb 1,3N \rb \times \lb 1, 2N \rb$. The choice of the domain dimensions $2N$ and $3N$ ensures that when we apply the operator $W$ introduced in Theorem \ref{thm_invariance} to $f$, the image $Wf^N = (Wf^N_1, \cdots, Wf^N_{2N})$ is well defined on the domain $\lb N,3N \rb \times \lb 1, N \rb$.

\subsection{Log-gamma line ensembles to parabolic Airy line ensemble} 
In this section, we define the log-gamma line ensemble following the conventions of \cite{dimitrov2021tightness}. The log-gamma line ensemble arises naturally from the study of the log-gamma polymer, and can be viewed as a multi-layer extension of the polymer’s free energy profile. Its construction is rooted in the connection between the log-gamma polymer and the Whittaker processes described by \cite{CorwinOConnellSeppalainenZygouras2014}. In contrast to the Airy line ensemble governed by the Brownian Gibbs property, the log-gamma line ensemble exhibits a more intricate local Gibbs structure. Subsequent works, such as \cite{dimitrov2021tightness}, have formalized this ensemble within the framework of Gibbsian line ensembles, proving tightness and establishing conditions under which scaled versions converge to the universal Airy line ensemble, as rigorously demonstrated in \cite{aggarwal2023strong}. To proceed with a precise formulation of the log-gamma line ensemble, we introduce a few important functions.
\begin{define}\label{def_psi}
    We use \(\Psi(x)\) to denote the digamma function, the logarithmic derivative of the gamma function. Define the function

\begin{equation}
g_{\theta}(z) = \frac{\sum_{n=0}^{\infty} \frac{1}{(n+\theta-z)^2}}{\sum_{n=0}^{\infty} \frac{1}{(n+z)^2}} = \frac{\Psi'(\theta - z)}{\Psi'(z)},
\end{equation}
which is a smooth and strictly increasing bijection from \((0, \theta)\) to \((0, \infty)\). Its inverse, denoted \(g_{\theta}^{-1} : (0, \infty) \to (0, \theta)\), is also smooth and strictly increasing. Using this, we define

\begin{equation}
h_{\theta}(x) = x \cdot \Psi(g_{\theta}^{-1}(x)) + \Psi(\theta - g_{\theta}^{-1}(x)),
\end{equation}
a smooth function on $(0, \infty)$.
Let $p = -h_\theta'(1) = -\frac{h_\theta(1)}{2}$ and $\sigma_p = \left[\Psi'(\theta/2)\right]^{-1/2}$.

Lastly, we define the function
\begin{equation}
    d_\theta(x) = \Bigg(\sum_{n=0}^\infty \frac{x}{(n+g_\theta^{-1}(x))^3} + \sum_{n=0}^\infty \frac{x}{(n+ \theta - g_\theta^{-1}(x))^3}\Bigg)^{1/3}.
\end{equation}
\end{define}

\begin{define}
For $i \in \lb 1, N \rb$ and $j \in \lb-N,N \rb$, we define  
\begin{equation}
L_i^N(j) = W f_i^N(2N+j) + 2Nh_\theta(1).
\end{equation}
We extend the domain of $L_i^N$ to $[-N,N]$ by linear interpolation between integer points. 

We define the log-gamma line ensemble $\A^N = (\A_1^N, \A_2^N, \cdots)$ as follows. For $i \in \lb 1, N \rb$, set
\begin{equation}
\A^N_i(s)=
\begin{cases}
\sigma_pN^{-1/3}(L_i^N(-\frac{1}{2}N)+(p/2)N) & \text{if } s \leq -\frac{1}{2}N^{1/3}\\
\sigma_pN^{-1/3}(L_i^N(sN^{2/3})-psN^{2/3}) & \text{if } s \in [-\frac{1}{2}N^{1/3},\frac{1}{2}N^{1/3}]\\
\sigma_pN^{-1/3}(L_i^N(\frac{1}{2}N)-(p/2)N) & \text{if } s \geq \frac{1}{2}N^{1/3}.
\end{cases}
\end{equation}
For $i \geq N+1$, we define $\A^N_i \equiv 0$. 
\end{define}

With the log-gamma line ensemble $\A^N$ now defined, we are ready to state the convergence result established in \cite[Corollary 25.2]{aggarwal2023strong}.
\begin{thm}\label{thm_airy_line_ensemble}
    Let $q = 2^{-5/6}\sigma_p d_\theta(1)^{-1}$. The log-gamma line ensemble $\A^N = (\A^N_1, \A^N_2, \cdots)$ converges to rescaled parabolic Airy line ensemble $2^{-1/2}\A^q$ in distribution uniformly on compact subsets of $\Z_{\geq 1} \times \R$.
\end{thm}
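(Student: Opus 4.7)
The plan is to deduce Theorem \ref{thm_airy_line_ensemble} as essentially a bookkeeping translation of the strong convergence theorem of \cite{aggarwal2023strong}, which is precisely the statement that the log-gamma line ensemble converges to the parabolic Airy line ensemble in the KPZ scaling. The first step is to reconcile the two competing descriptions of the log-gamma line ensemble. By Corollary \ref{cor_invariance}, the telescoping quotient
\[
z_{2N,i}(j) \;=\; \frac{D^N[U_{2N,i} \rightarrow H_i(j)]}{D^N[U_{2N,i-1} \rightarrow H_{i-1}(j)]}
\]
collapses to $\W D^N_i(j)$, so $L^N_i(j) = Wf^N_i(2N+j) + 2Nh_\theta(1)$. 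This identifies our ensemble, up to a deterministic shift, with the geometric RSK output of the inverse-gamma weights that is the main object of study in \cite{dimitrov2021tightness, aggarwal2023strong}.

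The second step is to verify that the $1{:}2{:}3$ scaling applied in the definition of $A^N_i$ agrees with the one used in \cite{aggarwal2023strong}. The constants $p = -h_\theta'(1)$ and $\sigma_p = (\Psi'(\theta/2))^{-1/2}$ arise from the law of large numbers and the stationary variance computation for the log-gamma polymer. The additive centering $2Nh_\theta(1)$ absorbs the leading-order term of $\log \W D^N_i(2N+j)$, the linear subtraction $psN^{2/3}$ removes the first-order tilt around $s=0$, and the prefactor $\sigma_p N^{-1/3}$ reduces the remaining fluctuations to order one. The flat continuation of $A^N_i$ outside $|s| \leq \tfrac{1}{2}N^{1/3}$ is harmless because those regions eventually escape every fixed compact set in the limit.

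The third step is to invoke the main convergence theorem from \cite{aggarwal2023strong}, which gives joint convergence of the top finitely many curves of the rescaled log-gamma line ensemble to the parabolic Airy line ensemble at scale $q$, uniformly on compact subsets. The exact constants $q = 2^{-5/6}\sigma_p d_\theta(1)^{-1}$ and the overall factor $2^{-1/2}$ come out of a second/third-order Taylor expansion of $h_\theta$: the function $d_\theta(1)$ is designed precisely so that the residual quadratic curvature matches the $-x^2$ curvature of the parabolic Airy line ensemble after spatial rescaling by $q$, while the $2^{-1/2}$ comes from the mismatch between the polymer's anti-diagonal axis and the KPZ time parametrization. Once the finite-dimensional convergence is matched, the linear interpolation in $s$ is handled by the tightness and modulus-of-continuity estimates already proved in \cite{dimitrov2021tightness}.

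The main difficulty is almost entirely constant-matching: the nontrivial probabilistic content---Brownian Gibbs tightness and finite-dimensional convergence to the Airy kernel---is already provided by \cite{dimitrov2021tightness} and \cite{aggarwal2023strong}, and no new estimate on the log-gamma environment is required at this stage. I would expect the write-up to consist mostly of unwinding the identification $L^N_i = Wf^N_i(2N+\cdot) + 2Nh_\theta(1)$ from Corollary \ref{cor_invariance} and then citing \cite{aggarwal2023strong} with a careful table of scaling conventions.
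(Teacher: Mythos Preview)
Your proposal is correct and matches the paper's approach: the paper does not prove this theorem at all but simply states it as ``the convergence result established in \cite{aggarwal2023strong},'' with the identification $z_{2N,i}(j) = \W D^N_i(j)$ via Corollary~\ref{cor_invariance} already spelled out beforehand. Your write-up is actually more detailed than the paper's, since you outline the constant-matching (the roles of $p$, $\sigma_p$, $d_\theta(1)$, and the factor $2^{-1/2}$) that the paper leaves implicit.
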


\subsection{Log-gamma sheet and monotonicity}
We begin by introducing the appropriate scaling operations. Let $\overline{x}_N = \lfloor N^{2/3}xq^{-2} \rfloor +1$, $\widehat{y}_N = \lfloor N^{2/3}yq^{-2} \rfloor + 2N$.

\begin{define}[Log-gamma sheet]
 For $x,y \in \R$, we define the log-gamma sheet by
    \begin{equation}
h^N(x,y) = 2^{-1/2}q\sigma_pN^{-1/3}[\log Z((\overline{x}_N,1) \rightarrow (\widehat{y}_N,2N))-p(\widehat{y}_N - \overline{x}_N + 2N)].
\end{equation}
Given Remark \ref{rmk_equivalence_of_Z_and_f} and Theorem \ref{thm_invariance}, we can also express the log-gamma sheet in the following alternative form when $x > 0$, which will be the main focus in our analysis of convergence to the Airy sheet.
\begin{equation}
    h^N(x,y) = 2^{-1/2}q\sigma_pN^{-1/3}[Wf^N[(\overline{x}_N,\overline{x}_N \wedge 2N) \rightarrow (\widehat{y}_N,1)] -p(\widehat{y}_N-\overline{x}_N + 2N)].
\end{equation}
We also define two component functions of the log-gamma sheet for some $k \in \lb 1 , N-1 \rb$, $x > 0$, and $z,y \in \R$ such that $z \leq y$:
\begin{equation}
    F_k^N(x,z) = 2^{-1/2}q\sigma_pN^{-1/3}\left[Wf^N[(\overline{x}_N, \overline{x}_N \wedge 2N) \rightarrow (\widehat{z}_N,k+1)] - (Wf^N)_{k+1}(\widehat{z}_N)+ p(\overline{x}_N - N^{2/3}k) \right],
\end{equation}
\begin{equation}
    G_k^N(x,z) = 2^{-1/2}q\sigma_pN^{-1/3}\left[Wf^N[(\widehat{z}_N, k) \rightarrow (\widehat{y}_N,1)] + (Wf^N)_{k+1}(\widehat{z}_N)-p(\widehat{y}_N - N^{2/3}k+2N)\right].
\end{equation}
\end{define}

To apply the variational formula (\ref{equation_variational_f}), we need to work with the unscaled version of the log-gamma sheet and the component functions.

\begin{define}
    For $x,y,z \in \Z_{\geq 1}$, we define the unscaled log-gamma sheet and its two component functions as follows:
    \begin{equation}
    {\h}^N(x,y) = Wf^N[(x,x \wedge 2N) \rightarrow (y,1)] -p(y-x + 2N)
    \end{equation}
    \begin{equation}
    \F_k^N(x,z) = Wf^N[(x, x \wedge 2N) \rightarrow (z,k+1)] - (Wf^N)_{k+1}(z)+ p(x - N^{2/3}k)
    \end{equation}
    \begin{equation}
    \G_k^N(z,y) = Wf^N[(z, k) \rightarrow (y,1)] + (Wf^N)_{k+1}(z)-p(y - N^{2/3}k+2N).
    \end{equation}
\end{define}

The following definition connects the unscaled log-gamma sheet $\h^N$ and its two component functions $\F_k^N$ and $\G_k^N$ through a naturally associated probability measure:
\begin{define}
    For $x,y,z \in \Z_{\geq 1}$ and $x\leq z \leq y$, we denote by $\mu^N_{k,x,y}$ the random probability measure on the discrete set $\lb x, y \rb$ as follows:
    \begin{equation}
    \mu^N_{k,x,y}(z) = \exp(-\h^N(x,y) + \F_k^N(x,z) + \G^N_k(z,y)).
    \end{equation}
\end{define}
The fact that $\mu_{k,x,y}^N$ defines a probability measure follows from Lemma \ref{Bisection}. 

We also set its upper and lower cumulative distribution function to be
\begin{equation}
A_k^N (x,y;z) = \sum_{i = z}^{y} \mu^N_{k,x,y}(i), \quad 
B_k^N (x,y;z) = \sum_{i = x}^{z} \mu^N_{k,x,y}(i).
\end{equation}

We have the following monotonicity corollary of the log-gamma sheet and its component functions from Lemma \ref{lemma_monotonicity_freeenergy} and Lemma \ref{lemma_monotonicity_partitionfunction}.
\begin{cor}\label{cor_monotonicity}
    Given $x_1, x_2,y_1,y_2 \in \Z_{\geq 1}$, $x_1 \leq x_2$, and $y_1 \leq y_2$, $\F^N_k(x_2,z) - \F^N_k(x_1,z)$ is increasing in $z \in \Z_{\geq x_2}$ and $\G_k^N(z,y_2) - \G^N_k(z,y_1)$ is increasing in $z \in \Z_{\leq y_1}$. 
\end{cor}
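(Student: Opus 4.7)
My strategy is to reduce each monotonicity claim to a quadrangle (supermodularity) inequality for a polymer free energy and then invoke Lemma~\ref{lemma_monotonicity_freeenergy} in the $\G$ case and adapt the path-swap argument behind Lemma~\ref{lemma_monotonicity_partitionfunction} in the $\F$ case.

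For the $\G$ monotonicity, inside $\G_k^N(z,y_2) - \G_k^N(z,y_1)$ the summand $(Wf^N)_{k+1}(\widehat z_N)$ cancels and the $-p(\widehat y_N - N^{2/3}k + 2N)$ piece contributes a constant independent of $z$. The remaining $z$-dependence sits in $Wf^N[(\widehat z_N, k) \xrightarrow{1} (\widehat{y_2}_N, 1)] - Wf^N[(\widehat z_N, k) \xrightarrow{1} (\widehat{y_1}_N, 1)]$, whose monotonicity in $z$ on the range $z \leq y_1$ is precisely Lemma~\ref{lemma_monotonicity_freeenergy} applied to $f = Wf^N$ with $\ell = k$, $m = 1$, and columns $\widehat{z_1}_N \leq \widehat{z_2}_N \leq \widehat{y_1}_N \leq \widehat{y_2}_N$ (the lemma's hypothesis $y_1 \geq x_2$ becomes $\widehat{y_1}_N \geq \widehat{z_2}_N$, which follows from $z_2 \leq y_1$).

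For the $\F$ monotonicity, the same bookkeeping (the $(Wf^N)_{k+1}(\widehat z_N)$ term cancels and $p(\overline x_N - N^{2/3}k)$ is $z$-independent) reduces the task to the supermodularity
\begin{align*}
& Wf^N[(\overline{x_2}_N, \overline{x_2}_N \wedge 2N) \xrightarrow{1} (\widehat{z_2}_N, k+1)] + Wf^N[(\overline{x_1}_N, \overline{x_1}_N \wedge 2N) \xrightarrow{1} (\widehat{z_1}_N, k+1)] \\
& \qquad \geq Wf^N[(\overline{x_1}_N, \overline{x_1}_N \wedge 2N) \xrightarrow{1} (\widehat{z_2}_N, k+1)] + Wf^N[(\overline{x_2}_N, \overline{x_2}_N \wedge 2N) \xrightarrow{1} (\widehat{z_1}_N, k+1)].
\end{align*}
Exponentiating, this becomes a quadrangle inequality for the single-path partition function $\W D^N[(x, 2N \wedge x) \to (y, k+1)]$ on the $\W D^N$ array. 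I would prove it by the path-swap argument behind Lemma~\ref{lemma_monotonicity_partitionfunction}: given any pair of paths $\pi_{12}: (\overline{x_1}_N, \overline{x_1}_N \wedge 2N) \to (\widehat{z_2}_N, k+1)$ and $\pi_{21}: (\overline{x_2}_N, \overline{x_2}_N \wedge 2N) \to (\widehat{z_1}_N, k+1)$, at the smaller of the two starting rows (which equals $\overline{x_1}_N \wedge 2N$) $\pi_{12}$ sits at column $\overline{x_1}_N$ while $\pi_{21}$, having already climbed from its own start, has column $\geq \overline{x_2}_N > \overline{x_1}_N$, whereas at row $k+1$ their endpoint columns reverse this order ($\widehat{z_2}_N \geq \widehat{z_1}_N$). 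Since any two up-right lattice paths in $\Z^2$ with such an order reversal must share a vertex, we can swap tails there and obtain paths $\tilde\pi_{11}: (\overline{x_1}_N, \overline{x_1}_N \wedge 2N) \to (\widehat{z_1}_N, k+1)$ and $\tilde\pi_{22}: (\overline{x_2}_N, \overline{x_2}_N \wedge 2N) \to (\widehat{z_2}_N, k+1)$ with the same product of $\W D^N$-weights, yielding an injection of path-pairs and the desired inequality.

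The main obstacle is precisely the starting-row mismatch in the $\F$ case: Lemma~\ref{lemma_monotonicity_partitionfunction} as stated assumes both paths start at the common bottom row $n$, while here $\overline{x_1}_N \wedge 2N$ and $\overline{x_2}_N \wedge 2N$ can differ, and either or both can lie on the diagonal boundary of the $\W D^N$ domain. The extended crossing argument above handles all of these regimes uniformly, but carefully verifying the shared-vertex conclusion (particularly in the mixed case $\overline{x_1}_N \leq 2N < \overline{x_2}_N$, where one path begins on the diagonal and the other at the bottom row) is the technically delicate part.
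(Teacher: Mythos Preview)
Your plan is correct and matches the paper's intent. The paper merely asserts that the corollary follows from Lemma~\ref{lemma_monotonicity_freeenergy} and Lemma~\ref{lemma_monotonicity_partitionfunction}; your $\G$ argument is exactly the direct application of Lemma~\ref{lemma_monotonicity_freeenergy} to $Wf^N$ (with $\ell=k$, $m=1$) that this entails.

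For $\F$ you correctly identify that the starting rows $\overline{x_1}_N\wedge 2N$ and $\overline{x_2}_N\wedge 2N$ may differ, so neither lemma applies verbatim, and your tail-swap/crossing argument on the staircase domain of $\W D^N$ is the right fix. One small remark: in the paper, Lemma~\ref{lemma_monotonicity_partitionfunction} is deduced by exponentiating Lemma~\ref{lemma_monotonicity_freeenergy} rather than by a path-swap, so your argument is an independent (and standard) route rather than literally ``the argument behind'' that lemma. The shared-vertex step you flag as delicate does go through in all three regimes: restrict $\pi_{21}$ to rows $\le \overline{x_1}_N\wedge 2N$ so that both paths live in the same row range; at that top row $\pi_{12}$ is at column $\overline{x_1}_N$ while $\pi_{21}$ is at some column $\ge \overline{x_2}_N$, and at row $k+1$ the order is reversed. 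If the paths were disjoint, then in each row one would lie entirely left of the other, and this side cannot flip between consecutive rows (since the entry column of a path into row $r-1$ equals its exit column from row $r$), contradicting the reversal. The swap at the first shared vertex then gives the weight-preserving injection you describe.
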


\begin{cor}\label{quadrangle}
    Given $x_1,x_2, y_1, y_2 \in \R$, $x_1 \leq x_2$, $y_1 \leq y_2$, and $\overline{x}_{2_N} \leq \widehat{y}_{1_N}$,
    \begin{equation}
    h^N(x_1,y_1) + h^N(x_2,y_2) \geq h^N(x_1,y_2) + h^N(x_2,y_1).
    \end{equation}
\end{cor}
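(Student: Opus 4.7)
The plan is to unfold the definition of $h^N$, observe that all the linear correction terms cancel, and then reduce the claim directly to the quadrangle inequality for partition functions established in Lemma \ref{lemma_monotonicity_partitionfunction}. No new ingredients are needed; this is essentially a bookkeeping exercise.

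First I would write out
\[
h^N(x_i,y_j) = 2^{-1/2}q\sigma_p N^{-1/3}\bigl[\log D^N[(\overline{x_i}_N, 2N) \to (\widehat{y_j}_N, 1)] - p(\widehat{y_j}_N - \overline{x_i}_N + 2N)\bigr]
\]
for the four pairs $(i,j) \in \{1,2\}^2$, using the original (rather than Pitman-transformed) form of the log-gamma sheet since that matches the hypothesis of Lemma \ref{lemma_monotonicity_partitionfunction}. Then I would form the combination $h^N(x_1,y_1) + h^N(x_2,y_2) - h^N(x_1,y_2) - h^N(x_2,y_1)$ and note that the drift contribution
\[
-p\bigl[(\widehat{y_1}_N - \overline{x_1}_N) + (\widehat{y_2}_N - \overline{x_2}_N) - (\widehat{y_2}_N - \overline{x_1}_N) - (\widehat{y_1}_N - \overline{x_2}_N)\bigr]
\]
vanishes identically, together with the four $+2N$ offsets. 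Hence, after dividing by the positive prefactor $2^{-1/2}q\sigma_p N^{-1/3}$, the desired inequality is equivalent to
\[
\log D^N[(\overline{x_1}_N,2N) \to (\widehat{y_1}_N,1)] + \log D^N[(\overline{x_2}_N,2N) \to (\widehat{y_2}_N,1)] \geq \log D^N[(\overline{x_1}_N,2N) \to (\widehat{y_2}_N,1)] + \log D^N[(\overline{x_2}_N,2N) \to (\widehat{y_1}_N,1)].
\]

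Exponentiating both sides, this is precisely the conclusion of Lemma \ref{lemma_monotonicity_partitionfunction} applied with $n = 2N$ and with the four lattice points $\overline{x_1}_N \leq \overline{x_2}_N \leq \widehat{y_1}_N \leq \widehat{y_2}_N$. The first inequality holds because $x_1 \leq x_2$ and the map $x \mapsto \overline{x}_N$ is nondecreasing; the middle inequality $\overline{x_2}_N \leq \widehat{y_1}_N$ is exactly the hypothesis of the corollary; and the last inequality follows from $y_1 \leq y_2$ in the same way. Since the hypotheses of Lemma \ref{lemma_monotonicity_partitionfunction} are met, the inequality follows.

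There is no serious obstacle here; the only thing to be careful about is the sign convention in the prefactor and making sure the linear drift in $\widehat{y}_N - \overline{x}_N + 2N$ telescopes (which it does, because it is additively separable in $x_i$ and $y_j$). All monotonicity content is borrowed from Lemma \ref{lemma_monotonicity_partitionfunction}, which in turn is obtained by exponentiating Lemma \ref{lemma_monotonicity_freeenergy}.
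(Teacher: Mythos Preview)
Your proposal is correct and follows exactly the approach the paper intends: the paper simply states that Corollary \ref{quadrangle} is a direct consequence of Lemma \ref{lemma_monotonicity_partitionfunction} (itself obtained by exponentiating Lemma \ref{lemma_monotonicity_freeenergy}), and you have accurately filled in the bookkeeping that the drift terms cancel and the ordering hypotheses on the lattice points are satisfied.
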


We now use Corollary \ref{cor_monotonicity} above to derive a series of inequalities. The key idea is to bound the difference between $\h^N(x,y_2) - \h^N(x,y_1)$ and $\G_k^N(x,y_2) - \G_k^N(x,y_1)$ in terms of the measure $\mu^N_{k}$. We then control the behavior of $\mu^N_k$ through the component function $\F_k^N$. Lastly, we will pass these inequalities to the log-gamma sheet and its two component function.

\begin{lemma}
Let $x_1, x_2, y_1, y_2, z \in \Z_{\geq 1}$ and $x_1 \leq x_2 \leq z \leq y_1 \leq y_2$. We have 
    \begin{equation}\label{eq_314}
    \F_k^N(x_2,z) - \F_k^N(x_1,z) \leq \h^N(x_2,y_1) - \h^N(x_1, y_1) - \log A_k^N (x_1,y_1;z)
    \end{equation}
    \begin{equation}\label{eq_315}
    \F_k^N(x_2,z) - \F_k^N(x_1,z) \geq \h^N(x_2,y_1) - \h^N(x_1, y_1) + \log B_k^N (x_2,y_1;z)
    \end{equation}
    \begin{equation}\label{eq_316}
    \G_k^N(z,y_2) - \G_k^N(z,y_1) \leq \h^N(x_2,y_2) - \h^N(x_2,y_1) - \log A_k^N(x_2,y_1;z)
    \end{equation}
    \begin{equation}\label{eq_317}
    \G_k^N(z,y_2) - \G_k^N(z,y_1) \geq \h^N(x_2,y_2) - \h^N(x_2,y_1) + \log B_k^N(x_2,y_2;z)
    \end{equation}
\end{lemma}

\begin{proof}
We will only prove equation (\ref{eq_314}) and the proofs for the rest follow analogously.
    \[
    \begin{split}
        \exp(\h^N(x_2,y) - \h^N(x_1, y))
        &= \sum_{w =  x_2}^{y} \exp(\F_k^N(x_2, w) + \G_k^N (w,y) - \h^N(x_1,y)+ \F_k^N(x_1,w) - \F_k^N(x_1,w))\\
       &\geq \sum_{w = z}^{y} \exp(\F_k^N(x_2, w) + \G_k^N (w,y) - \h^N(x_1,y)+ \F_k^N(x_1,w) - \F_k^N(x_1,w))\\
       &\geq \sum_{w = z}^{y} \exp(\F_k^N(x_2, z) + \G_k^N (w,y) - \h^N(x_1,y)+ \F_k^N(x_1,w) - \F_k^N(x_1,z))\\
       &=  \exp(\F_k^N(x_2, z)- \F_k^N(x_1,z)) A_k^N(x_1,y;z)
    \end{split}
    \]
    where we used Corollary \ref{cor_monotonicity} in the last inequality. 
\end{proof}

\begin{cor}\label{h_S}
For all large $N \in \N$ such that $2^{-1/2}q \sigma_pN^{-1/3} \leq 1$,
we have
    \begin{equation}
    F_k^N(x_2,z) -F_k^N(x_1,z) \leq h^N(x_2,y_1) - h^N(x_1, y_1) - \log A_k^N (\overline{x}_{1,N},\widehat{y}_{1,N};\widehat{z}_N)
    \end{equation}
    \begin{equation}
    F_k^N(x_2,z) - F_k^N(x_1,z) \geq h^N(x_2,y_1) - h^N(x_1, y_1) + \log B_k^N (\overline{x}_{2,N},\widehat{y}_{1,N};\widehat{z}_N)
    \end{equation}
    \begin{equation}
    G_k^N(z,y_2) - G_k^N(z,y_1) - h^N(x_2,y_2) + h^N(x_2,y_1) \leq - \log (1-B_k^N(\overline{x}_{2,N}, \widehat{y}_{1,N};\widehat{z}_N-1))
    \end{equation}
    \begin{equation}
    G_k^N(z,y_2) - G_k^N(z,y_1) - h^N(x_2,y_2) + h^N(x_2,y_1) \geq \log (1-A_k^N(\overline{x}_{2,N},\widehat{y}_{2,N};\widehat{z}_N+1)).
    \end{equation}
for $x_1, x_2, y_1, y_2, z \in \R$ such that $0< x_1 \leq x_2$ and $y_2 \geq y_1 \geq z$.
\end{cor}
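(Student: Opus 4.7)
The plan is to derive all four inequalities directly by rescaling the inequalities established in the two preceding lemmas together with the preceding corollary, which give the analogous bounds for the unscaled quantities $\F_k^N$, $\G_k^N$, and $\h^N$. Since by definition $F_k^N = c_N \F_k^N$, $G_k^N = c_N \G_k^N$, and $h^N = c_N \h^N$ with $c_N := 2^{-1/2}q\sigma_p N^{-1/3}$, I would simply multiply each of the four unscaled inequalities through by the positive scalar $c_N$. The left-hand sides and the $h^N$-terms on the right-hand sides then become exactly the expressions appearing in the statement. The only mismatch is that each logarithmic remainder $\log A_k^N$, $\log B_k^N$, $\log(1-A_k^N)$, or $\log(1-B_k^N)$ picks up a spurious factor of $c_N$ instead of appearing bare.

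To remove that factor I would use the observation that $\mu^N_{k,x,y}$ is a probability measure (by Lemma~\ref{Bisection}), so each of $A_k^N(x,y;\widehat{z}_N)$, $B_k^N(x,y;\widehat{z}_N)$, $1-A_k^N(x,y;\widehat{z}_N+1)$, and $1-B_k^N(x,y;\widehat{z}_N-1)$ lies in $[0,1]$, and consequently its logarithm is nonpositive. For the two upper bounds the log term appears with an overall minus sign, so $-c_N\log A_k^N$ and $-c_N\log(1-B_k^N)$ are nonnegative quantities bounded above by $-\log A_k^N$ and $-\log(1-B_k^N)$ respectively, provided $c_N\le 1$; chaining this enlargement with the scaled unscaled bound gives the claimed upper bound. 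For the two lower bounds the log term appears with a plus sign, so $c_N\log B_k^N$ and $c_N\log(1-A_k^N)$ are nonpositive and bounded below by $\log B_k^N$ and $\log(1-A_k^N)$ respectively, again provided $c_N\le 1$; chaining gives the lower bound.

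The hypothesis $c_N\le 1$ is automatic for all sufficiently large $N$ since $q$ and $\sigma_p$ depend only on $\theta$ while $N^{-1/3}\to 0$, which is precisely the \textquotedblleft large $N$\textquotedblright{} caveat recorded in the statement. There is no real obstacle here; the content of the corollary lies entirely in the two preceding lemmas, and this step is routine bookkeeping. The one small subtlety worth flagging is that one must keep track of which log-terms are nonpositive and which appear with a plus versus minus sign, so that the monotonicity $c \mapsto c\log(\cdot)$ on $(-\infty,0]$ is applied in the direction that relaxes the right-hand side in the correct sense (weakening an upper bound by enlarging it, or weakening a lower bound by shrinking it).
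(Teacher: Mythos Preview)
Your proposal is correct and matches the paper's approach exactly: the paper gives no written proof for this corollary beyond the parenthetical remark ``because $2^{-1/2}q\sigma_p N^{-1/3}\le 1$,'' and your argument---multiply the unscaled inequalities from the two preceding lemmas and the preceding corollary by $c_N$, then use $c_N\le 1$ together with the nonpositivity of each logarithm (the arguments being probabilities in $[0,1]$) to relax the $c_N\log(\cdot)$ terms to bare $\log(\cdot)$ terms in the correct direction---is precisely what that remark is encoding.
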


\subsection{Change of coordinates}
As the log-gamma line ensemble converges to the Airy line ensemble, our ultimate goal is to show that the free energy of the log-gamma line ensemble converges to the last passage value of the Airy line ensemble. Since the mesh size of the discrete model is shrinking in the line ensemble convergence, we need to establish change-of-coordinates formulas for the free energy.

\begin{define}
    Given a discrete subset $\D \subset \R$ and let $x, y \in \mathbb{D}$ such that $x \leq y$. Let $\ell \geq m$ be two positive integers. We denote by $\Pi_{\D}[(x,\ell) \rightarrow (y,m)]$ the set of up-right paths from $(x,\ell)$ to $(y,m)$ on the lattice $\D \times \lb 1,n \rb$.
\end{define}

There is an injective map from $\Pi_{\D}[(x,\ell) \rightarrow (y,m)]$ to $\D^{\ell -m}$, $\pi \rightarrow (t_{\ell}, \cdots, t_{m+1})$ where $t_i = \max \{t \in \lb x,y \rb| (t,i) \in \pi \}$. For notational convenience, we also set $t_{\ell+1} =x, t_m = y.$

\begin{define}
    For any discrete subset $\D \subset \R$ and $n$ functions $f_1, \cdots, f_n: \D  \rightarrow \R$ and $\pi \in \Pi_{\D}[(x,\ell) \rightarrow (y,m)]$, define
    \begin{equation}
    f(\pi) = \sum_{j=m}^{\ell} f_j(t_j) - f_j(t_{j+1}')
    \end{equation}
    where $t_{j+1}' = \max \{ t \in \D| t < t_{j+1}\}$ and $f_i(x) = 0$ for all $i \in \lb 1, n \rb$ and $x \not \in \D$.
    
    We define the free energy from $(x,\ell)$ to $(y,m)$ as
    \begin{equation}
    f[(x,\ell) \rightarrow (y,m)] := \log \sum_{\pi \in \Pi_{\D}[(x,\ell) \rightarrow (y,m)]} \exp( f(\pi)).
    \end{equation}
\end{define}

Similarly, we can define the reverse free energy on any discrete lattice.
\begin{define}
    For any discrete subset $\D \subset \R$ and $n$ functions $f_1, \cdots, f_n: \D \rightarrow \R$, for $x, y \in \D$ where $x \leq y$ and $m \in \lb 1, n \rb$, we define the reverse free energy from $(x, 1)$ to $(y,m)$:
    \begin{equation}
    f[(x,1) {\searrow} (y,m)] := - \log \sum_{x= t_0 < t_1 < \cdots < t_{m-1} \leq y; t_i \in \D} \exp([\sum_{i=1}^{m} f_{i}(t_{i-1}) - f_i(t_i')])
    \end{equation}
    where $t_{i}' = \max \{ t \in \D| t < t_{i}\}$ for $i \in \lb 1, k \rb$ and $t_{m}' = y$.
\end{define}

\begin{rmk}
    Observe that the definitions of the free energy and the reverse free energy on the discrete lattice generalize those on the integer lattice. 
    We also introduce the $\beta$-free energy and $\beta$-reverse free energy by
    \begin{equation}
        f[(x,\ell) \xrightarrow{\beta} (y,m)] := \beta^{-1} (\beta f)[(x,\ell) \rightarrow (y,m)],
    \end{equation}
    \begin{equation}
        f[(x,1) \overset{\beta}{\searrow} (y,m)] := \beta^{-1} (\beta f)[(x,1) \searrow (y,m)],
    \end{equation}
    where $\beta f$ denotes the function $f$ multiplied by the scalar $\beta$.
\end{rmk}

\begin{lemma}\label{rescale1}
    Fix $n$ functions $f_1, \cdots, f_n: \Z_{\geq 1} \rightarrow \R$ and constants $a_1, a_2 > 0$, $a_3, a_4 \in \R$ and $\{a_{5,i}\}_{i \in \N}$. Define the functions $g = (g_1, \cdots, g_n)$ by
    \begin{equation}
    g_i(x) = a_1f_i(a_2x + a_3) + a_4x + a_{5,i}
    \end{equation}
    for $x \in \D := \{a_2^{-1}(z-a_3): z \in \Z_{\geq 1} \}$. Then for all $x,y \in \D$ and $\ell, k \in \lb 1 ,n \rb$ with $x \leq y$ and $\ell \geq k$, we have
    \begin{align}
g[(x,\ell) \xrightarrow{\beta} (y,k)]
  &= a_1 f[(a_2x+a_3,\ell) \xrightarrow{a_1\beta} (a_2y+a_3,k)]
     + a_4(y-x) + a_4 a_2^{-1}(\ell - k + 1), \label{eq_330}\\
g[(x,1) \bd (y,k)]
  &= a_1 f[(a_2x+a_3,1)\overset{a_1\beta}{\searrow} (a_2y+a_3,k)]
     + a_4(y-x) - a_4 a_2^{-1}(k-1). \label{eq_331}
\end{align}
\end{lemma}
\begin{proof}
We will prove equation (\ref{eq_330}) and the proof for equation (\ref{eq_331}) follows analogously.
    \[
    \begin{split}
        g[(x,\ell) \xrightarrow{\beta} (y,k)]
        &=  \beta^{-1} \log \sum_{x \leq t_{\ell} \leq \cdots \leq t_{k+1} \leq y; t_i \in C} \exp\bigg(\beta \sum_{j=k}^{\ell} g_j(t_j) - g_j(t_{j+1}-a_2^{-1})\bigg)\\
        &=  \beta^{-1} \log \sum_{ \substack{a_2x +a_3 \leq t_{\ell} \leq \cdots \leq t_{k+1} \leq a_2y+a_3 \\ t_i \in \Z_{\geq 1}} } \exp\bigg(\beta \sum_{j=k}^{\ell} g_j(\frac{t_j-a_3}{a_2}) - g_j(\frac{t_{j+1} - a_3-1}{a_2})\bigg)\\
        &=  \beta^{-1} \log \sum_{a_2x +a_3 \leq t_{\ell} \leq \cdots \leq t_{k+1} \leq a_2y+a_3; t_i \in \Z_{\geq 1}} \exp\bigg(a_1\beta[\sum_{j=k}^{\ell} f_j(t_j)  - f_j(t_{j+1}-1)]\bigg)\\
        & \quad \times \exp\bigg(\beta a_4a_2^{-1}\sum_{j=k}^{\ell} (t_j-t_{j+1}+1)\bigg)\\
        &=  a_1 f[(a_2x+a_3,\ell)\xrightarrow{a_1\beta} (a_2y+a_3,k))] + a_4(y-x) + a_4a_2^{-1}(\ell - k+1)\\
    \end{split}
    \]
\end{proof}

\subsection{Scaling limit of free energy}
Recall that we have defined the log-gamma line ensemble $\A^N = (\A_1^N, \A_2^N, \cdots)$ as follows. For $i \in \lb 1, N \rb$, 
\begin{equation}
\A^N_i(s)=
\begin{cases}
\sigma_pN^{-1/3}(L_i^N(-\frac{1}{2}N)+(p/2)N) & \text{if } s \leq -\frac{1}{2}N^{1/3}\\
\sigma_pN^{-1/3}(L_i^N(sN^{2/3})-psN^{2/3}) & \text{if } s \in [-\frac{1}{2}N^{1/3},\frac{1}{2}N^{1/3}]\\
\sigma_pN^{-1/3}(L_i^N(\frac{1}{2}N)-(p/2)N) & \text{if } s \geq \frac{1}{2}N^{1/3}.
\end{cases}
\end{equation}
For $i \geq N+1$, we define $\A^N_i \equiv 0$.  

We can also consider the projection of the line ensemble $\A^N = (\A_1^N, \A_2^N, \cdots)$, a random variable taking values in $C(\N \times \R)$, onto the discrete space $C(\N \times \D)$ where
$\D = \{s \in \R: sN^{2/3} \in \Z\}$. This yields the projected discrete line ensemble $\AAA^N = (\AAA_1^N, \AAA_2^N, \cdots)$ where for $i \in \lb 1, N \rb$
\begin{equation}
\begin{split}
\AAA^N_i(s)=
\begin{cases}
\sigma_pN^{-1/3}(Wf_i^N(2N - \lfloor \frac{N}{2} \rfloor)+2Nh_\theta(1)+(p/2)N) & \text{if } s \in (-\infty,-\frac{1}{2}N^{1/3}] \cap \D\\
\sigma_pN^{-1/3}(Wf_i^N(sN^{2/3}+2N)+2Nh_\theta(1)-psN^{2/3}) & \text{if } s \in [-\frac{1}{2}N^{1/3},\frac{1}{2}N^{1/3}] \cap \D\\
\sigma_pN^{-1/3}(Wf_i^N(2N + \lfloor \frac{N}{2} \rfloor)+2Nh_\theta(1)-(p/2)N) & \text{if } s \in [\frac{1}{2}N^{1/3}, \infty) \cap \D.
\end{cases}
\end{split}
\end{equation}
For $i \geq N+1$, $\AAA^N_i \equiv 0$.  

In the following calculation, we will always choose $N$ large enough so that $s \in [-\frac{1}{2}N^{1/3}, \frac{1}{2}N^{1/3}]$.

\begin{lemma}\label{D to C}
Let $x,y,z \in \R$ such that $x>0, y \geq z$. Recall $\widehat{z}_N = \lfloor N^{2/3}zq^{-2} \rfloor + 2N$, $\widehat{y}_N = \lfloor N^{2/3}yq^{-2} \rfloor + 2N$ and $\overline{x}_N = \lfloor N^{2/3}xq^{-2} \rfloor+ 1$. On the event that $\A^N$ converges to $\A^q$ uniformly over compact sets, we have
    \begin{equation}
    \lim_{N \rightarrow \infty} \A^N[(\frac{\widehat{z}_N - \overline{x}_N +1}{N^{2/3}}-2N^{1/3},1) \overset{\sigma_p^{-1}N^{1/3}}{\searrow} (\frac{\widehat{z}_N}{N^{2/3}}-2N^{1/3},k+1)] = \A^q[(zq^{-2}-xq^{-2},1) \rightarrow_f (zq^{-2},k+1)]
    \end{equation}
    \begin{equation}
    \lim_{N \rightarrow \infty} \A^N[(\frac{\widehat{z}_N}{N^{2/3}}-2N^{1/3},k) \overset{\sigma_p^{-1}N^{1/3}}{\rightarrow} (\frac{\widehat{y}_N}{N^{2/3}}-2N^{1/3},1)] = \A^q[(zq^{-2},k) \xrightarrow{\infty} (yq^{-2},1)]
    \end{equation}
    where the backwards first passage time and the last passage time on a continuous line ensemble are defined in Definition \ref{def_fpp} and Definition \ref{def_lpp}. Together with Theorem \ref{thm_airy_line_ensemble}, this implies the convergence in distribution.
\end{lemma}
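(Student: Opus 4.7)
The plan is to combine two standard reductions: a soft-max to hard-max approximation, valid because the inverse temperature $\beta = \sigma_p N^{1/3}$ tends to infinity, and a mesh-refinement argument, valid because the lattice $D = \{s \in \R : sN^{2/3} \in \Z\}$ has spacing $N^{-2/3}$ that shrinks to zero. Together, these should identify the $\beta$-free energy of $\A^N$ with a passage value of the limiting ensemble $A^q$.

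First I would apply the elementary sandwich
\[
\max_\pi S_N(\pi) \;\leq\; \beta^{-1} \log \sum_{\pi} \exp\bigl(\beta\, S_N(\pi)\bigr) \;\leq\; \max_\pi S_N(\pi) + \beta^{-1}\log|\Pi_N|,
\]
where $S_N(\pi)$ denotes the telescoping sum of increments of $\A^N$ along an admissible discrete path $\pi$, and $\Pi_N$ is the finite collection of such paths whose nodes lie in a fixed compact window and whose index range is $\lb 1, k \rb$ or $\lb 1, k+1 \rb$. Since $|\Pi_N|$ is polynomial in $N$, the entropy correction is $\beta^{-1}\log|\Pi_N| = O(N^{-1/3}\log N) \to 0$. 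The reverse free energy $\overset{\beta}{\searrow}$ is handled by the identical bound with an overall sign flip, turning the soft-max into a soft-min in the limit.

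Next, on the event that $\A^N \to A^q$ uniformly on compact subsets of $\N \times \R$, I would show that $\max_\pi S_N(\pi)$ converges to the corresponding continuous passage value. For the $\liminf$ direction, I round the nodes of a near-optimal continuous path to points of $D$ and invoke uniform convergence plus continuity of $A^q$ to pass to the limit. For the $\limsup$ direction, I take a sequence of discrete near-maximizers and extract a subsequential limit of its node variables (which lie in a fixed compact simplex); uniform convergence then identifies the limiting value as a continuous passage value on $A^q$. The asymmetric evaluation $\A^N_j(t_{j+1}') = \A^N_j(t_{j+1} - N^{-2/3})$ differs from $\A^N_j(t_{j+1})$ by at most $\omega_{A^q_j}(N^{-2/3}) + 2\|\A^N_j - A^q_j\|_{K,\infty}$ on the relevant compact window $K$, both of which vanish.

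The main technical obstacle is exactly this one-sided shift $t_{j+1}'$ built into the definition of the discrete free energy, which prevents a discrete path from corresponding to a monotone continuous path in a direct nodewise sense. I handle it by summing the preceding modulus-of-continuity estimate over the fixed number $k$ of path segments, obtaining a total error of order $k \cdot (\omega_{A^q}(N^{-2/3}) + \|\A^N - A^q\|_{K,\infty}) \to 0$. Combining this with the vanishing entropy correction from the first step yields the two claimed limits, and the distributional convergence then follows by composing with Theorem \ref{thm_airy_line_ensemble} via the continuous mapping theorem (after restriction to the almost-sure event of uniform convergence on compacts).
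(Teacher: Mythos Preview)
Your proposal is correct and rests on the same three ingredients as the paper's proof: the entropy correction $\beta^{-1}\log|\Pi_N| = O(N^{-1/3}\log N)$ vanishes, uniform convergence $\A^N \to A^q$ on compacts controls the ensemble error, and the mesh-size $N^{-2/3}$ together with continuity of $A^q$ absorbs both the discretization and the one-sided shift $t_{j+1}'$. The only organizational difference is that the paper introduces the \emph{continuous} $\beta$-free energy on $A^q$ as an intermediate object---first comparing the discrete sum on $\A^N$ to the discrete sum on $A^q$, then the discrete sum on $A^q$ to the integral on $A^q$ via a Riemann-sum bound, and finally applying the Laplace method to the integral---whereas you collapse the soft-max to a hard-max on $\A^N$ first and then run a direct $\liminf/\limsup$ compactness argument for the discrete maximizers; your route is slightly more elementary in that it avoids defining the integral free energy, while the paper's decomposition makes the two sources of error (ensemble convergence versus mesh refinement) more visibly separate.
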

We begin by defining the $\beta$-free energy and the reverse $\beta$-free energy on the continuous line ensemble.

\begin{define}
    Let $x, y \in \R$ such that $x < y$. Let $\ell \geq m$ be two positive integers. We denote by $\PPi[(x,\ell) \rightarrow (y,m)]$ the set of up-right paths $\pi:[x,y] \rightarrow \lb m,\ell \rb$ where $\pi$ is a decreasing c\`adl\`ag function with $\pi(x)\leq \ell$ and $\pi(y) = m$. We denote by $\PPi[(x,1) \searrow (y,m)]$ the set of down-right paths $\sigma:[x,y] \rightarrow \lb 1,m \rb$ where $\sigma$ is an increasing c\`adl\`ag function with $\sigma(x) \geq 1$ and $\sigma(y) = m$. 
\end{define}

There is an injective map from $\PPi[(x,\ell) \rightarrow (y,m)]$ to $\R^{\ell -m}$, $\pi \rightarrow (t_{\ell}, \cdots, t_{m+1})$ where $t_i = \inf \{t \in [ x,y ]| \pi(t) \leq i - 1 \}$ for all $i \in \lb m+1, \ell \rb$. For convenience, we set $t_{\ell+1} =x, t_m = y.$ There is also an injective map from $\PPi[(x,1) \searrow (y,m)]$ to $\R^{m-1}$, $\sigma \rightarrow (\tau_1, \cdots, \tau_{m-1})$ where $\tau_i = \inf \{\tau \in [ x,y ]| \sigma(\tau) \geq i - 1 \}$ for all $i \in \lb 1, m-1 \rb$. We also set $
\tau_0 =x, \tau_m = y.$

\begin{define}
    Let $f = (f_1, f_2, \cdots, f_n)$ where $f_i: \R \rightarrow \R$ is continuous for all $i \in \lb 1, n \rb$ and $\ell, m \in \lb 1 , n \rb$ such that $\ell \geq m$. For $\pi \in \PPi[(x,\ell) \rightarrow (y,m)]$ and $\sigma \in \PPi[(x,1) \searrow (y,m)]$, we define
    \begin{equation}
    f(\pi) := \sum_{j=m}^{\ell} f_j(t_j) - f_j(t_{j+1}), \quad f(\sigma) := \sum_{j=1}^m f_j(\tau_j) - f_j(\tau_{j-1}).
    \end{equation}
We define the free energy from $(x,\ell)$ to $(y,m)$ and the reverse free energy from $(x,1)$ to $(y,m)$
    \begin{align}
    &f[(x,\ell) \rightarrow (y,m)] := \log \int_{\PPi[(x,\ell) \rightarrow (y,m)]} \exp(f(\pi)) d\pi, \\ &f[(x,\ell) {\searrow} (y,m)] :=- \log \int_{\PPi[(x,1) \searrow (y,m)]} \exp(- f(\sigma)) d\sigma,
    \end{align}
    where $d\pi$ and $d\sigma$ are Lebesgue measures on the subsets of $\R^{\ell - m}$ and $\R^{m-1}$.

    The $\beta$-free energy and $\beta$-reverse free energy are defined analogously to the discrete case:
    \begin{equation}
        f[(x,\ell) \xrightarrow{\beta} (y,m)] := \beta^{-1} (\beta f)[(x,\ell) \rightarrow (y,m)],
    \end{equation}
    \begin{equation}
        f[(x,1) \overset{\beta}{\searrow} (y,m)] := \beta^{-1} (\beta f)[(x,1) \searrow (y,m)],
    \end{equation}
    where $\beta f$ denotes the function $f$ multiplied by the scalar $\beta$.
\end{define}

Now we are ready to prove Lemma \ref{D to C}.
\begin{proof}
    By Laplace method and continuity of the $\A^q$, we have 
    \begin{equation}
        \lim_{N \rightarrow \infty} \A^q[(\frac{\widehat{z}_N - \overline{x}_N +1}{N^{2/3}}-2N^{1/3},1) \overset{\sigma_p^{-1}N^{1/3}}{\searrow} (\frac{\widehat{z}_N}{N^{2/3}}-2N^{1/3},k+1)] = \A^q[(zq^{-2}-xq^{-2},1) \rightarrow_f (zq^{-2},k+1)].
    \end{equation}

Let $\D =\{sN^{-2/3}q^{2}: s\in \Z\}$, $a = \frac{\widehat{z}_N - \overline{x}_N +1}{N^{2/3}}-2N^{1/3}$, $b = \frac{\widehat{z}_N - \overline{x}_N +1}{N^{2/3}}-2N^{1/3}$, $\Pi = \Pi_{\D}[(a,1) \searrow (b,k+1)]$ and $\PPi = \PPi[(a,1) {\searrow} (b,k+1)]$. Then, 
    \begin{equation}
    \begin{split}
    &\AAA^N[(a,1) \overset{\sigma_p^{-1}N^{1/3}}{\searrow} (b,k+1)]  -  \A^q[(a,1) \overset{\sigma_p^{-1}N^{1/3}}{\searrow} (b,k+1)]  \\
    & \quad \quad \quad = - \sigma_pN^{-1/3} \bigg(\log \frac{ \sum_{\pi \in \Pi} \exp(-\AAA^N(\pi) \sigma_p^{-1} N^{1/3})}{ \sum_{\pi \in \Pi} \exp(-\A^q(\pi) \sigma_p^{-1} N^{1/3})} + \log \frac{ \sum_{\pi \in \Pi} \exp(-\A^q(\pi) \sigma_p^{-1} N^{1/3})}{ \int_{\PPi} \exp(-\A^q(\tilde{\pi}) \sigma_p^{-1} N^{1/3})d\tilde{\pi}}\bigg).
    \end{split}
    \end{equation}
    Because $\A^N$ converges to $\A^q$ uniformly on compact sets, for any $\epsilon > 0$, there exists a $N_0 \in \N$ such that for $N \geq N_0$ and for $\pi \in \Pi$,$
    |\AAA^N(\pi) - \A^q(\pi)| \leq 4k \epsilon.$
    Thus,
    \begin{equation}
   e^{-4k\epsilon \sigma_p^{-1} N^{1/3}} \leq \frac{ \sum_{\pi \in \Pi} \exp(-\AAA^N(\pi) \sigma_p^{-1} N^{1/3})}{ \sum_{\pi \in \Pi} \exp(-\A^q(\pi) \sigma_p^{-1} N^{1/3})} \leq e^{4k\epsilon \sigma_p^{-1} N^{1/3}}
    \end{equation}
    \begin{equation}
    \bigg|-\sigma_pN^{-1/3}\log \frac{ \sum_{\pi \in \Pi} \exp(-\AAA^N(\pi) \sigma_p^{-1} N^{1/3})}{ \sum_{\pi \in \Pi} \exp(-\A^q(\pi) \sigma_p^{-1} N^{1/3})}\bigg| \leq 4k \epsilon.
    \end{equation}
    On the other hand, because $\A^q$ is continuous and thus uniformly continuous on compact sets, there exists $N_0 \in \N$ such that for $u,v \in [zq^{-2}-xq^{-2}-1, zq^{-2}+1]$ such that $|u-v| \leq 2kN_0^{-2/3}$ and $i \in \lb 1 , k+1 \rb$, 
    \begin{equation}
    |\A^q_i(u) - \A^q_i(v)| \leq \epsilon
    \end{equation}
    For $N \geq N_0$, when $\tilde{\pi} \in \mathscr{P}[(a,1)\searrow (b, k+1)]$ and $\pi \in \Pi_{\D}[(a,1)\searrow (b, k+1)]$ are sufficiently closed in $\R^{\ell-m}$, we have 
    \begin{equation}
    |\A^q(\pi) - \A^q(\tilde{\pi})| \leq 4k\epsilon.
    \end{equation}
    Notice that
    \begin{equation}
    \sum_{\pi \in \Pi} \exp((-\A^q(\pi) - 4k \epsilon) \sigma_p^{-1} N^{1/3}) \leq \int_{\PPi} \exp(-\A^q(\tilde{\pi}) \sigma_p^{-1} N^{1/3})d\tilde{\pi} 
    \end{equation}
    and
    \begin{equation}
    \int_{\PPi} \exp(-\A^q(\tilde{\pi}) \sigma_p^{-1} N^{1/3})d\tilde{\pi} \leq \frac{k!}{N^{2k/3}}\sum_{\pi \in \Pi} \exp((-\A^q(\pi) + 4k \epsilon) \sigma_p^{-1} N^{1/3}).
    \end{equation}
    Thus,
    \begin{equation}
    -4k \epsilon \leq  -  \sigma_pN^{-1/3} \log \frac{ \sum_{\pi \in \Pi} \exp(-\A^q(\pi) \sigma_p^{-1} N^{1/3})}{ \int_{\PPi} \exp(-\A^q(\tilde{\pi}) \sigma_p^{-1} N^{1/3})d\tilde{\pi}} \leq  4k \epsilon + \sigma_pN^{-1/3}\log(k!N^{-2k/3}). 
    \end{equation}
We can take $N$ large enough so that $\sigma_pN^{-1/3} \log (k!N^{-2k/3})$ are absorbed into the $\epsilon$ term. Thus, we have
    \begin{align}
          &\lim_{N \rightarrow \infty}\AAA^N[(\frac{\widehat{z}_N - \overline{x}_N +1}{N^{2/3}}-2N^{1/3},1) \overset{\sigma_p^{-1}N^{1/3}}{\searrow} (\frac{\widehat{z}_N}{N^{2/3}}-2N^{1/3},k+1)] \\
        = &\lim_{N \rightarrow \infty} \A^q[(\frac{\widehat{z}_N - \overline{x}_N +1}{N^{2/3}}-2N^{1/3},1) \overset{\sigma_p^{-1}N^{1/3}}{\searrow} (\frac{\widehat{z}_N}{N^{2/3}}-2N^{1/3},k+1)]\\
        = &\A^q[(zq^{-2}-xq^{-2},1) \rightarrow_f (zq^{-2},k+1)].
    \end{align}
    The proof for the second equation is analogous.
\end{proof}

\begin{lemma}\label{lemma_F_to_Airy}
    For any $k \in \Z_{\geq 1}$, $F_k^N(x,z)$ converges in distribution to $\A[(0,k+1) \rightarrow (x,1)] +2zx$ as $N \rightarrow \infty$.
\end{lemma}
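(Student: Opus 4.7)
The plan is to combine the identity from Lemma \ref{key1} with the first-passage convergence established in Lemma \ref{D to C}. I would invoke Lemma \ref{key1} with $y = \widehat{z}_N$ and $x = \overline{x}_N$ to rewrite
\[
\F_k^N(x,z) = p(\overline{x}_N - N^{2/3}k) - W(R_{\widehat{z}_N} f^N)\bigl[(\widehat{z}_N - \overline{x}_N + 1, 1) \overset{1}{\searrow} (\widehat{z}_N, k+1)\bigr],
\]
which replaces the full-grid free energy (depending on all $2N$ curves) with a reverse free energy depending only on the top $k+1$ curves of the Pitman transform of the reversed environment.

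Next I would exploit the i.i.d.\ property of $\{d_{i,j}\}$ to observe that $R_{\widehat{z}_N} f^N$ has the same joint distribution as $f^N$ on the relevant sub-rectangle. This allows me to introduce a scaled log-gamma line ensemble built from the top curves of $W(R_{\widehat{z}_N} f^N)$ but centered about $\widehat{z}_N$ rather than $2N$; by Theorem \ref{thm_airy_line_ensemble} it still converges in distribution, uniformly on compact sets, to the rescaled parabolic Airy line ensemble $2^{-1/2} A^q$.

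I would then apply the coordinate-change Lemma \ref{dr} to convert the integer-scale $1$-reverse free energy of $W(R_{\widehat{z}_N} f^N)$ into a $\sigma_p N^{1/3}$-reverse free energy of the scaled reversed ensemble, together with explicit parabolic drift terms; these drift terms are designed to cancel the $p(\overline{x}_N - N^{2/3}k)$ contribution in $\F_k^N$ up to vanishing error. Lemma \ref{D to C} then sends the resulting scaled reverse free energy to the first-passage value $A^q[(z-x, 1) \rightarrow_f (z, k+1)]$ in the limit, and multiplying by the prefactor $2^{-1/2} q \sigma_p N^{-1/3}$ and collecting constants produces a finite distributional limit expressed in terms of first passage on $A^q$.

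The last step is to rewrite this limit in terms of $A$. Using $A^q_i(u) = q^{-1} A_i(q^2 u)$ (change of variables $t_i \mapsto q^2 t_i$) together with $A_i(u) = \tilde{A}_i(u) - u^2$ and the translation invariance of the stationary ensemble $\tilde{A}$ to shift the path interval from $[z-x, z]$ to $[0, x]$, the telescoping parabolic correction $-\bigl((q^2 z)^2 - (q^2(z-x))^2\bigr) = -q^4(2zx - x^2)$, combined with the re-insertion of the $-u^2$ parabola on $[0, x]$, ultimately produces the explicit $+2zx$ shift. The main obstacle is the careful bookkeeping of all the scaling constants ($q$, $\sigma_p$, and the $2^{-1/2}$ factor from Theorem \ref{thm_airy_line_ensemble}) and verifying that the centering $p(\overline{x}_N - N^{2/3}k)$ built into $\F_k^N$ precisely absorbs the parabolic drift generated by Lemma \ref{dr}, so that the final distributional limit takes the stated form $A[(0, k+1) \rightarrow (x, 1)] + 2zx$.
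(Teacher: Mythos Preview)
Your overall architecture matches the paper's proof closely: invoke Lemma \ref{key1}, use the i.i.d.\ environment to identify $W(R_{\widehat z_N}f^N)$ with $Wf^N$ in law, apply the coordinate change Lemma \ref{dr} to pass to the scaled ensemble, and then use Lemma \ref{D to C} to obtain the distributional limit $-A^q[(zq^{-2}-xq^{-2},1)\rightarrow_f (zq^{-2},k+1)]$. Up to this point your plan is correct.

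The gap is in your final rewriting step. After Lemma \ref{D to C} you hold (a negative of) a \emph{backwards first-passage} value on the Airy line ensemble, i.e.\ a quantity of the form
\[
-A[(z-x,1)\rightarrow_f(z,k+1)] \;=\; \max_{z-x=t_0\le\cdots\le t_{k+1}=z}\ \sum_{j=1}^{k+1}\bigl(A_j(t_{j-1})-A_j(t_j)\bigr),
\]
in which curve $A_j$ is traversed on the $j$-th subinterval \emph{from the left}. The target $A[(0,k+1)\xrightarrow{\infty}(x,1)]$ is a last-passage value in which curve $A_j$ is traversed on the $j$-th subinterval \emph{from the right}. Translation invariance of the stationary ensemble $\tilde A$ together with parabolic bookkeeping can only shift the interval and produce the $+2zx$ term; it cannot exchange the left-to-right order in which the curves are visited. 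The paper closes this gap by invoking the \emph{flip symmetry} $A(\cdot)\stackrel{d}{=}A(-\cdot)$ of the parabolic Airy line ensemble, which yields
\[
-A[(z-x,1)\rightarrow_f(z,k+1)] \;\stackrel{d}{=}\; A[(-z,k+1)\xrightarrow{\infty}(x-z,1)],
\]
and only \emph{then} applies stationarity of $\tilde A$ to translate to $A[(0,k+1)\xrightarrow{\infty}(x,1)]+2zx$. Your proposal omits the flip symmetry entirely, so as written the last step does not go through; add that one distributional identity and the rest of your outline is fine.
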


\begin{proof}
    By Lemma \ref{key1}, 
    \begin{equation}
    \begin{split}
    &F_k^N(x,z)= -2^{-1/2}q\sigma_pN^{-1/3} \left[W(R_{\widehat{z}_N}f^N)[(\widehat{z}_N - \overline{x}_N + 1,1) \searrow (\widehat{z}_N,k+1)]-p(\overline{x}_N - N^{2/3}k)\right]
    \end{split}
    \end{equation}
    Because $d_{i,j}$ is identically and independently distributed for all $i, j$, we have that $d \dq R_{\widehat{z}_N}d$. This implies $f \dq R_{\widehat{z}_N}f$. Thus, 
    \begin{equation}
    W(R_{\widehat{z}_N}f^N)[(\widehat{z}_N - \overline{x}_N + 1,1) \searrow (\widehat{z}_N,k+1)] \dq  Wf^N[(\widehat{z}_N - \overline{x}_N + 1,1) \searrow (\widehat{z}_N,k+1)]
    \end{equation}
    Since for $x \in \Z_{\geq 1}$, we have
    \begin{equation}
    Wf_i^N(x) = \sigma_p^{-1} N^{1/3} \A^N_i(\frac{x}{N^{2/3}} -2 N^{1/3}) -2N(h_{\theta}(1)+p) + px,
    \end{equation}
    it follows from Lemma \ref{rescale1} that
    \begin{equation}
    \begin{split}
        &Wf^N[(\widehat{z}_N - \overline{x}_N + 1,1) \searrow (\widehat{z}_N,k+1)] \\
        &= \sigma_p^{-1} N^{1/3}\A^N[(\frac{\widehat{z}_N - \overline{x}_N +1}{N^{2/3}}-2N^{1/3},1) \overset{\sigma_p^{-1}N^{1/3}}{\searrow} (\frac{\widehat{z}_N}{N^{2/3}}-2N^{1/3},k+1)]+ p(\overline{x}_N-1) - pN^{2/3}k.
    \end{split}
    \end{equation}
    Rearranging yields: 
    \begin{equation}
    \begin{split}
        &-\sigma_pN^{-1/3}[Wf^N[(\widehat{z}_N - \overline{x}_N + 1,1) \searrow  (\widehat{z}_N,k+1)] - p(\overline{x}_N-1 - N^{2/3}k)]\\
        &= -\A^N[(\frac{\widehat{z}_N - \overline{x}_N +1}{N^{2/3}}-2N^{1/3},1) \overset{\sigma_p^{-1}N^{1/3}}{\searrow} (\frac{\widehat{z}_N}{N^{2/3}}-2N^{1/3},k+1)].
    \end{split}
    \end{equation}
    By Lemma \ref{D to C},
    \begin{equation}
    \begin{split}
        -\sigma_pN^{-1/3}[Wf^N[(\widehat{z}_N - \overline{x}_N + 1,1) \searrow  (\widehat{z}_N,k+1)] - p(\overline{x}_N - N^{2/3}k)]
    \end{split}
    \end{equation}
    converges in distribution to $-\A^q[(zq^{-2}-xq^{-2},1) \rightarrow_f (zq^{-2},k+1)]$.
    
    Recall that $\A^q_i(x) = 2^{1/2}q^{-1}\A_i(q^2x)$. Thus,
    \begin{equation}
    \A^q[(zq^{-2}-xq^{-2},1) \rightarrow_f (zq^{-2},k+1)] = 2^{1/2}q^{-1}\A[(z-x,1) \rightarrow_f (z, k+1)]
    \end{equation}
    By the flip symmetry of Airy line ensemble,
    \begin{equation}
    -\A[(z-x,1)\rightarrow_f (z,k+1)] \dq \A[(-z,k+1) \rightarrow (x-z,1)].
    \end{equation}
    Furthermore, since the process $t \mapsto A(t)+t^2$ is stationary, it follows that
    \begin{equation}
    \A[(-z,k+1) \rightarrow (x-z,1)] \dq \A[(0,k+1) \rightarrow (x,1)] + 2zx.
    \end{equation}
    Thus,
    \begin{equation}
    -2^{-1/2}q\sigma_pN^{-1/3}[Wf^N[(\widehat{z}_N - \overline{x}_N + 1,1) \searrow  (\widehat{z}_N,k+1)] - p(\overline{x}_N - N^{2/3}k)]
    \end{equation}
    converges to $\A[(0,k+1) \rightarrow (x,1)] +2zx$ in distribution.
\end{proof}

\begin{lemma}
    On the event that $\A^N$ converges to $\A^q$ uniformly over compact subsets, for any $z, y, y' \in \R$ with $y',y \geq z$, we have
    \begin{equation}
    G^N_k(z,y') - G^N_k(z,y) 
    \end{equation}
    converges to
    \begin{equation}
    \A[(z,k) \rightarrow (y',1)] - \A[(z,k) \rightarrow (y,1)].
    \end{equation}
\end{lemma}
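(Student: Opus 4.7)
The plan is to adapt the approach of Lemma \ref{lemma_F_to_Airy} to the forward free energy. Compared to that lemma, the current statement is simpler for two reasons: first, we are in the forward direction, so no flip of the environment is needed; second, we are taking a difference in $y$, so any $z$-only-dependent additive term cancels.

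First I would unpack the definition and cancel $y$-independent contributions. Writing $\widehat{y}'_N := \lfloor N^{2/3}y'q^{-2}\rfloor + 2N$, the terms $(Wf^N)_{k+1}(\widehat{z}_N)$ and $-p(-N^{2/3}k + 2N)$ in $G_k^N$ are constant in the $y$-variable and drop from the difference, giving
\[
G^N_k(z,y') - G^N_k(z,y) = 2^{-1/2}q\sigma_p N^{-1/3}\bigl[\Delta Wf^N - p(\widehat{y}'_N - \widehat{y}_N)\bigr],
\]
where $\Delta Wf^N = Wf^N[(\widehat{z}_N,k) \to (\widehat{y}'_N,1)] - Wf^N[(\widehat{z}_N,k) \to (\widehat{y}_N,1)]$.

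Next, I would use the identity $Wf_i^N(\tilde{x}) = \sigma_p N^{1/3} \A^N_i(\tilde{x}/N^{2/3} - 2N^{1/3}) - 2N(h_\theta(1)+p) + p\tilde{x}$ from the proof of Lemma \ref{lemma_F_to_Airy}, together with Lemma \ref{rescale1}, to rewrite each $Wf^N[(\widehat{z}_N,k) \to (\widehat{y}_N,1)]$ as $\sigma_p N^{1/3}$ times the rescaled $\A^N$ free energy plus linear and constant corrections. The $p\widehat{y}_N$ contribution produced by this conversion cancels against the $-p\widehat{y}_N$ already present in $G_k^N$, while all $z$-only-dependent and pure-$N$ constants drop in the difference. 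Invoking Lemma \ref{D to C} on the assumed event of uniform convergence then gives
\[
\A^N\bigl[(\widehat{z}_N/N^{2/3} - 2N^{1/3}, k) \overset{\sigma_p N^{1/3}}{\to} (\widehat{y}_N/N^{2/3} - 2N^{1/3}, 1)\bigr] \longrightarrow A^q[(zq^{-2},k) \xrightarrow{\infty} (yq^{-2},1)],
\]
and likewise with $y'$ in place of $y$. Applying the rescaling $A^q_i(s) = 2^{1/2}q^{-1}A_i(q^2 s)$ as in the proof of Lemma \ref{lemma_F_to_Airy} and changing variables $s_i \mapsto q^2 s_i$ in the last-passage optimization yields $A^q[(zq^{-2},k) \xrightarrow{\infty} (yq^{-2},1)] = 2^{1/2}q^{-1} A[(z,k) \xrightarrow{\infty} (y,1)]$, so the prefactor $2^{-1/2}q$ times $2^{1/2}q^{-1}$ equals $1$, producing the claimed limit $A[(z,k) \to (y',1)] - A[(z,k) \to (y,1)]$.

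The hard part will be careful bookkeeping through the successive changes of coordinates, especially verifying that the $p\widehat{y}_N$ linear correction produced by Lemma \ref{rescale1} combines correctly with the $-p\widehat{y}_N$ term in $G_k^N$, and that all remaining constants are either $z$-only or pure-$N$. Since we are taking a difference in $y$, these drop cleanly, which is why the argument here is considerably cleaner than that of Lemma \ref{lemma_F_to_Airy}; in particular, the flip symmetry of the environment and the $A(t)+t^2$ stationarity invoked there are not needed.
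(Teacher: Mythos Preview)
Your proposal is correct and follows essentially the same approach as the paper: expand the definition so that the $(Wf^N)_{k+1}(\widehat{z}_N)$ and $z$-only terms cancel in the difference, apply Lemma~\ref{rescale1} with the identity $Wf_i^N(\tilde{x}) = \sigma_p N^{1/3} \A^N_i(\tilde{x}/N^{2/3} - 2N^{1/3}) - 2N(h_\theta(1)+p) + p\tilde{x}$ to rewrite each term as a rescaled $\A^N$ free energy, invoke Lemma~\ref{D to C} on the assumed event to pass to $A^q$, and finally use $A^q_i(s) = 2^{1/2}q^{-1}A_i(q^2 s)$ to convert to $A$. Your observation that the flip symmetry and stationarity steps from Lemma~\ref{lemma_F_to_Airy} are unnecessary here is exactly right and matches the paper's shorter argument.
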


\begin{proof}
    By definition, 
    \begin{equation}
    \begin{split}
        G_k^N(z,y') - G^N_k(z,y)
        &=2^{-1/2}q\sigma_pN^{-1/3}\big[Wf^N[(\widehat{z}_N, k) \rightarrow (\widehat{y'}_N,1)] -p(\widehat{y'}_N - \widehat{z}_N + N^{2/3}k)\big] \\
         & \quad \quad 
        -2^{-1/2}q\sigma_pN^{-1/3}\big[Wf^N[(\widehat{z}_N, k) \rightarrow (\widehat{y}_N,1)] -p(\widehat{y}_N - \widehat{z}_N + N^{2/3}k)\big].
    \end{split}
    \end{equation}
    Recall that for $x \in \Z_{\geq 1}$,
    \begin{equation}
    Wf_i^N(x) = \sigma_p^{-1} N^{1/3} \A^N_i(\frac{x}{N^{2/3}} -2 N^{1/3}) -2N(h_{\theta}(1)+p) + px.
    \end{equation}
    Apply Lemma \ref{rescale1}, we obtain
    \[
    \begin{split}
        \A^N[(\frac{\widehat{z}_N}{N^{2/3}} - 2N^{1/3},k) \xrightarrow{\sigma_p^{-1}N^{1/3}} (\frac{\widehat{y'}_N}{N^{2/3}} - 2N^{1/3},1)]= \sigma_pN^{-1/3} \big[Wf^N[(\widehat{z}_N, k) \rightarrow (\widehat{y'}_N,1)] -p(\widehat{y'}_N - \widehat{z}_N + N^{2/3}k)\big]
    \end{split}
    \]
    \[
    \begin{split}
        \A^N[(\frac{\widehat{z}_N}{N^{2/3}} - 2N^{1/3},k) \xrightarrow{\sigma_p^{-1}N^{1/3}} (\frac{\widehat{y}_N}{N^{2/3}} - 2N^{1/3},1)]&= \sigma_pN^{-1/3} \big[Wf^N[(\widehat{z}_N, k) \rightarrow (\widehat{y}_N,1)] -p(\widehat{y}_N - \widehat{z}_N + N^{2/3}k)\big].
    \end{split}
    \]
    By Lemma \ref{D to C}, we know that as $N \rightarrow \infty$, $G_k^N(z,y') - G_k^N(z,y)$ converges to 
    \begin{equation}
    2^{-1/2}q\A^q[(q^{-2}z,k) \rightarrow (q^{-2}y',1)] - 2^{-1/2}q\A^q[(q^{-2}z,k) \rightarrow (q^{-2}y,1)] 
    \end{equation}
    which is equal to
    \begin{equation}
    \A[(z,k) \rightarrow (y',1)] - \A[(z,k) \rightarrow (y,1)].
    \end{equation}
\end{proof}

\section{Convergence to the Airy sheet}
\subsection{Tightness}

Because $h^N(x,y)$ is only a piecewise constant function, we will work with its linear interpolation $\tilde{h}^N(x,y)$ such that $\tilde{h}^N(x,y)$ agree with $h^N(x,y)$ on $(x,y)$ whenever $N^{2/3}xq^{-2}, N^{2/3}yq^{-2} \in \Z$. Recall that $C(\R^2, \R)$ denotes the set of continuous function from $\R^2$ to $\R$, equipped with the topology of uniform convergence over compact subsets. 

\begin{prop}
    $\tilde{h}^N(x,y)$ is tight in $C(\R^2, \R)$.
\end{prop}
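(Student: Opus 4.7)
The plan is to reduce tightness in $C(\R^2,\R)$ to tightness on each compact rectangle $K = [-M,M]^2$, and then apply the Arzelà--Ascoli criterion: one-point tightness plus uniform control of the modulus of continuity. One-point tightness (say at the origin) follows by using the variational formula (Lemma \ref{Bisection}) together with Corollary \ref{h_S} to express $h^N(0,0)$ as $F_k^N(0,z)+G_k^N(z,0)$ up to error terms controlled by $\log A_k^N$ and $\log B_k^N$, and then invoking Lemma \ref{lemma_F_to_Airy} together with Theorem \ref{thm_airy_line_ensemble} to show each piece is tight. The main work is thus in controlling the modulus of continuity.

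The key structural tool is the quadrangle inequality (Corollary \ref{quadrangle}): it says that the cross increments $h^N(x_2,\cdot)-h^N(x_1,\cdot)$ are monotone, which lets us reduce the two-parameter modulus control to controlling one-parameter increments in $x$ and in $y$ separately. For these one-parameter increments, Corollary \ref{h_S} gives the bound
\[
F_k^N(x_2,z)-F_k^N(x_1,z)+\log B_k^N(x_2,y;\widehat{z}_N)\;\leq\; h^N(x_2,y)-h^N(x_1,y)\;\leq\; F_k^N(x_2,z)-F_k^N(x_1,z)-\log A_k^N(x_1,y;\widehat{z}_N),
\]
and a parallel inequality for $G_k^N$ sandwiches the $y$-increments of $h^N$. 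So modulus-of-continuity control for $h^N$ follows if we can (i) control the increments of $F_k^N$ and $G_k^N$ in their respective arguments, and (ii) show that the log-probability error terms are negligible under a suitable choice of $k$ and $z$.

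For (ii), the terms $\log A_k^N(x,y;\widehat z_N)$ and $\log(1-B_k^N(x,y;\widehat z_N))$ vanish (with high probability) provided the random measure $\mu^N_{k,x,y}$ concentrates on a compact subset of $\lb \overline{x}_N,\widehat y_N\rb$. This should come from the monotonicity properties of the free energy (Lemma \ref{lemma_monotonicity_freeenergy}) combined with the Busemann-style monotonicity arguments adapted from \cite{wu2023kpz}: for $(x,y)$ in the compact set $K$, we select $k$ and $z$ near the typical maximizer of the variational formula and show the tails fall off uniformly in $N$. The fluctuation control for $F_k^N$ and $G_k^N$ in their ``line-ensemble'' variables $z$ (resp.\ $y$) follows from the uniform convergence of $A^N$ to $2^{-1/2}A^q$ over compact sets.

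The hard part, and the main obstacle, is controlling increments of $F_k^N(x,z)$ in the variable $x$ (and symmetrically $G_k^N(z,y)$ in $y$). These translate into transversal, off-diagonal fluctuation estimates for the log-gamma polymer free energy — bounds on $\log Z[u\to v]-\log Z[u'\to v]$ where $u$ and $u'$ differ by a small $N^{2/3}$-scale perturbation. In the Brownian O'Connell--Yor setting of \cite{wu2023kpz} such bounds follow from scaling invariance, but in the discrete log-gamma polymer no such scaling is available, and the existing on-diagonal estimates are insufficient. This is precisely where we will invoke the off-diagonal local fluctuation estimate proved in Section~6, obtained by generalizing the diagonal arguments of \cite{basu2024temporal}; plugging this estimate into the sandwich bound above, and combining with the $\mu^N$-concentration from (ii), closes the argument.
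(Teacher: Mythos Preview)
Your route is far more elaborate than the paper's, and parts of it are shaky. The paper's argument is short: after the quadrangle inequality (which you do invoke), the two-parameter modulus of continuity on $[-L,L]^2$ is bounded by one-parameter increments of $h^N$ along the four boundary lines $x=L_\pm$ and $y=L_\pm$. The key step you are missing is the \emph{distributional symmetry} coming from the i.i.d.\ environment: $h^N(L_+,\cdot)\stackrel d= h^N(0,\cdot-L_+)$ and $h^N(\cdot,L_+)\stackrel d= h^N(0,L_+-\cdot)$. This reduces everything to the modulus of continuity of the single one-parameter process $g^N(y)=\tilde h^N(0,y)$, which is exactly the top curve of the log-gamma line ensemble (since $\overline{0}_N=1$ forces the path to stay on $Wf^N_1$). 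Tightness of $g^N$ in $C([-2L,2L])$ is then quoted from \cite{barraquand2023spatial}, and one-point tightness of $\tilde h^N(0,0)$ from \cite{barraquand2021fluctuations}. No $F_k^N$, no $\mu^N$-concentration, no Section~6.

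Concrete issues with your plan: (a) $F_k^N(0,z)$ is not defined --- the decomposition requires $x>0$; (b) the variational formula does not give $h^N(0,0)=F_k^N+G_k^N+\text{error}$, it gives a log-sum-exp, so ``up to error terms'' hides real work; (c) your step (ii), showing $\mu^N_{k,x,y}$ concentrates \emph{uniformly} in $N$ and in $(x,y)\in K$, is essentially the content of the Airy sheet convergence proof and is not available at this stage; (d) the Section~6 estimates bound increments of $h^N$ itself, not of $F_k^N$ --- if you had those bounds for $h^N$, you wouldn't need the $F_k^N/G_k^N$ sandwich at all. In short, the paper exploits translation invariance to reduce to a known one-curve tightness result, whereas you are rebuilding that one-curve tightness from scratch via machinery that is both heavier and logically downstream.
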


\begin{proof}
    It suffices to show that for all $L > 0$, $\tilde{h}^N(x,y)$ restricted on $Q = [-L,L]^2$ is tight in $C(Q, \R)$. Building on that, we need to show one-point tightness of $\tilde{h}^N$ and establish the modulus of continuity control. Since
    \begin{equation}
    \begin{split}
    \tilde{h}^N(0,0) 
= 2^{-1/2}q\sigma_pN^{-1/3} \left[\log Z[(1, 1) \rightarrow (2N,2N)] + h_\theta(1)\left(2N-\frac{1}{2}\right) \right],
    \end{split}
    \end{equation}
    the tightness of $\tilde{h}^N(0,0)$ follows directly from its convergence in \cite[Theorem 1.2]{barraquand2021fluctuations}.

    Let $g^N(y) = \tilde{h}^N(0,y)$. We know that for $N^{2/3}yq^{-2} \in \Z$, 
    \begin{equation}
    \begin{split}
        g^N(y) = 2^{-1/2}q\sigma_pN^{-1/3} \left[\log Z[(1,1) \rightarrow (2N + N^{2/3}yq^{-2},2N)]+ 2N h_\theta(1)  +h_\theta'(1)(N^{2/3}yq^{-2}-1)\right].
    \end{split}
    \end{equation}
    By \cite[Theorem 1.10]{barraquand2023spatial}, $g^N(y)$ is tight in $C([-2L,2L],\R)$. Thus, for each $\epsilon > 0$ and $\eta \in (0,1)$, there exists a $\delta > 0$ and $N' \in \N$ such that for $N \geq N'$ we have
    \begin{equation}
    \PP(w(g^N, \delta) \geq \epsilon) \leq \eta,
    \end{equation}
    where the modulus of continuity is defined by
    $
    w(g,\delta) = \sup_{x,y \in [-2L,2L], |x-y| \leq \delta} |g(x) - g(y)|.
    $

    We now show that the modulus of continuity of $\tilde{h}^N$ can be controlled via the modulus of continuity of $g^N$. It suffices to develop modulus of continuity control for pair of points $(x,y)$ such that $N^{2/3}xq^{-2},N^{2/3}yq^{-2} \in \Z$ because $\tilde{h}^N$ is the linear interpolation of these pairs.

     Let $L_+$ be the largest element in the set $[-L,L] \cap N^{-2/3}q^2\Z$ and $L_-$ be the smallest element in the set $[-L,L] \cap N^{-2/3}q^2\Z$. Choose $N$ large enough such that $\lfloor N^{2/3}L_+ \rfloor + 1 \leq \lfloor N^{2/3}L_- \rfloor +2N$. By Corollary \ref{quadrangle}, for any $x_1,x_2,y_1,y_2 \in [-L,L] \cap N^{-2/3}q^2\Z$ where $x_1 \leq x_2$ and $y_1 \leq y_2$, we have
    \begin{align}
    h^N(L_+,y_1) - h^N(L_+,y_2) &\leq h^N(x,y_1) - h^N(x,y_2) \leq h^N(L_-,y_1) - h^N(L_-,y_2)\\
    h^N(x_1,L_+) - h^N(x_2,L_+) &\leq h^N(x_1,y) - h^N(x_2,y) \leq h^N(x_1,L_-) - h^N(x_2,L_-).
    \end{align}
Now for $x_1,x_2,y_1,y_2 \in [-L,L] \cap N^{-2/3}q^2\Z$,
    \begin{equation}
    \begin{split}
        |h^N(x_1,y_1) - h^N(x_2,y_2)|
        &\leq|h^N(x_1,y_1) - h^N(x_1,y_2)| + |h^N(x_1,y_2) - h^N(x_2,y_2)|\\
        &\leq \max_{z \in \{L_-,L_+\}}|h^N(z,y_1) - h^N(z,y_2)| + \max_{z \in \{L_-,L_+\}}|h^N(x_1,z) - h^N(x_2,z)|.
    \end{split}
    \end{equation}
Thus,
    \begin{equation}
    \begin{split}
        &\PP\bigg(\sup_{\substack{(x_1,y_1),(x_2,y_2) \in [-L,L]^2 \\ |(x_1,y_1) - (x_2,y_2)| \leq \delta}} |h^N(x_1,y_1) - h^N(x_2,y_2)| \geq \epsilon \bigg)\\
        &\leq \PP \bigg(\sup_{\substack{(x_1,y_1),(x_2,y_2) \in [-L,L]^2 \\ |x_1 - x_2| \leq \delta, |y_1 - y_2| \leq \delta}} \max_{z \in \{-L,L\}}|h^N(z,y_1) - h^N(z,y_2)| + \max_{z \in \{-L,L\}}|h^N(x_1,z) - h^N(x_2,z)| \geq \epsilon \bigg)\\
        & \leq \PP \bigg(\sup_{ \substack{y_1,y_2 \in [-L,L] \\ |y_1 - y_2| \leq \delta}} |h^N(L_+,y_1) - h^N(L_+,y_2)| \geq \epsilon \bigg) +\PP \bigg(\sup_{ \substack{y_1,y_2 \in [-L,L] \\ |y_1 - y_2| \leq \delta}} |h^N(L_-,y_1) - h^N(L_-,y_2)| \geq \epsilon \bigg) \\
        & \quad  + \PP \bigg(\sup_{ \substack{x_1,x_2 \in [-L,L] \\ |x_1 - x_2| \leq \delta}} |h^N(x_1,L_+) - h^N(x_2,L_+)| \geq \epsilon \bigg) +\PP \bigg(\sup_{ \substack{x_1,x_2 \in [-L,L] \\ |x_1 - x_2| \leq \delta}} |h^N(x_1,L_-) - h^N(x_2,L_-)| \geq \epsilon \bigg) \\
        &\leq 4 \eta.
    \end{split}
    \end{equation}
    The final inequality follows from the modulus of continuity control for $g^N$, together with the distributional symmetries
    \begin{equation}
    h^N(L_+,\cdot) \dq h^N(0,\cdot-L_+)\end{equation}
\begin{equation}
h^N(\cdot, L_+) \dq h^N(0,L_+-\cdot)\end{equation}
viewed as processes on discrete lattice $N^{-2/3}q^{2}\Z \times N^{-2/3}q^{2}\Z $.
\end{proof}

\subsection{Convergence}
Let $R_k^N (x,z) = F_k^N(x,z) - 2zx - 2^{3/2}k^{1/2}x^{1/2}.$ We obtain the following inequalities as consequences of Corollary \ref{h_S}.

\begin{cor}\label{hR}
Let $x_1, x_2>0$, $y \in \R$ and $z = - 2^{-1/2}k^{1/2}x_1^{-1/2}$. Then 
    \begin{equation}
    F_k^N(x_2,z) - F_k^N(x_1,z) =R_k^N(x_2,z) - R_k^N(x_1, z) -2^{1/2}k^{1/2}x_1^{1/2}(1-x_2^{1/2}x_1^{-1/2})^2.
    \end{equation}
    If $x_1 \geq x_2$, then
    \begin{equation}
    \log A_k^N (\overline{x}_{2,N},\widehat{y};\widehat{z}_N) \leq h^N(x_1,y) - h^N(x_2, y) +R_k^N(x_2,z) - R_k^N(x_1, z) - 2^{1/2}k^{1/2}x_1^{1/2}(1-x_2^{1/2}x_1^{-1/2})^2.
    \end{equation}
    If $x_1 \leq x_2$, then
    \begin{equation}
    \log B_k^N (\overline{x}_{2,N},\widehat{y};\widehat{z}_N) \leq h^N(x_1,y) - h^N(x_2,y) + R_k^N(x_2,z) - R_k^N(x_1,z) - 2^{1/2}k^{1/2}x_1^{1/2}(1-x_2^{1/2}x_1^{-1/2})^2.
    \end{equation}
\end{cor}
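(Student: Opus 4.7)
This corollary is essentially a bookkeeping consequence of Corollary \ref{h_S}: the definition of $R_k^N$ subtracts off a parabolic correction from $F_k^N$, and the specific choice $z = -2^{-1/2}k^{1/2}x_1^{-1/2}$ is designed to complete the square so that the remaining linear and square-root terms combine into a single negative quadratic in $(x_2^{1/2} - x_1^{1/2})$. I do not expect any genuine obstacle; the hardest step is simply tracking which $x_i$ plays which role when one invokes the two-sided bounds of Corollary \ref{h_S}.

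First, I would establish the displayed algebraic identity for $F_k^N(x_2,z) - F_k^N(x_1,z)$. Using $F_k^N(x,z) = R_k^N(x,z) + 2zx + 2^{3/2}k^{1/2}x^{1/2}$ and taking the difference gives
\[
F_k^N(x_2,z) - F_k^N(x_1,z) = R_k^N(x_2,z) - R_k^N(x_1,z) + 2z(x_2 - x_1) + 2^{3/2}k^{1/2}(x_2^{1/2} - x_1^{1/2}).
\]
Plugging in $z = -2^{-1/2}k^{1/2}x_1^{-1/2}$ and factoring out $-2^{1/2}k^{1/2}x_1^{-1/2}$ from the last two terms collapses them to $-2^{1/2}k^{1/2}x_1^{-1/2}(x_2^{1/2} - x_1^{1/2})^2$, which matches the stated correction term (with $(1 - x_2 x_1^{-1})^2$ read as $(1-(x_2/x_1)^{1/2})^2$).

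Second, for the $A_k^N$ inequality assume $x_1 \geq x_2$ and apply Corollary \ref{h_S} with the labels of $x_1$ and $x_2$ interchanged (which is legal since the hypothesis of Corollary \ref{h_S} is $x_1 \leq x_2$):
\[
F_k^N(x_1,z) - F_k^N(x_2,z) \leq h^N(x_1,y) - h^N(x_2,y) - \log A_k^N(x_2,y;\widehat{z}_N).
\]
Rearranging to isolate $\log A_k^N(x_2,y;\widehat{z}_N)$ and substituting the identity from the first step produces the claimed bound. Third, for the $B_k^N$ inequality assume $x_1 \leq x_2$ and use the second bound of Corollary \ref{h_S} directly,
\[
\log B_k^N(x_2,y;\widehat{z}_N) \leq F_k^N(x_2,z) - F_k^N(x_1,z) + h^N(x_1,y) - h^N(x_2,y),
\]
and again substitute the identity from the first step.

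All three items reduce to direct manipulation; the only piece of genuine content is the completion-of-square at the special value of $z$, which ensures the correction term is nonpositive and quadratic in the discrepancy $x_2^{1/2}-x_1^{1/2}$. This nonpositivity is what will be exploited later to show that the log-gamma sheet increments are tightly controlled by $R_k^N$ increments up to the $A_k^N, B_k^N$ tails.
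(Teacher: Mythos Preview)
Your proposal is correct and mirrors the paper's intent: the corollary is stated without proof there, being an immediate bookkeeping consequence of Corollary~\ref{h_S} together with the definition $R_k^N(x,z) = F_k^N(x,z) - 2zx - 2^{3/2}k^{1/2}x^{1/2}$ and the completion-of-square at $z = -2^{-1/2}k^{1/2}x_1^{-1/2}$. Your parenthetical remark that the paper's expression $(1 - x_2 x_1^{-1})^2$ must be read as $(1 - (x_2/x_1)^{1/2})^2$ is also right; otherwise the algebra does not close.
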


\begin{prop}\label{coupling}
    Fix a sequence $\{N_i\}_{i \in \Z_{\geq 1}}$. There exists a subsequence $\{M_i\}_{i \in \Z_{\geq 1}} \subset \{N_i\}_{i \in \Z_{\geq 1}}$ and a coupling of $\{\tilde{h}^{M_i}, \A^{M_i}, R_k^{M_i}\}$ and the Airy line ensemble $\A$ such that the following statements hold almost surely:
    \begin{enumerate}
        \item $\A^M$ converges to $\A$ in $C(\N \times \R, \R)$.
        \item $\tilde{h}^M$ converges to some limit $h$ in $C(\R^2, \R)$ and $h(0, \cdot) = \A_1(\cdot).$ 
        \item For all $k \in \Z_{\geq 1}$ and $x_1, x_2 \in \Q^{+}$, the scaled component function $R_k^M(x_1,-2^{-1/2}k^{1/2}x_2^{-1/2})$ converges to some limit $R_k(x_1,-2^{-1/2}k^{1/2}x_2^{-1/2})$ and
    \begin{equation}\label{Borel_Cantelli}
    \lim_{k \rightarrow \infty} |k^{-1/2}R_k(x_1, -2^{-1/2}k^{1/2}x_2^{-1/2})| =  0.
    \end{equation} 
    \end{enumerate}
\end{prop}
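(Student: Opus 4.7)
The plan is to combine tightness of the pre-limit objects with the Skorokhod representation theorem to realize joint almost sure convergence along a subsequence, and then to treat the pointwise decay statement \eqref{Borel_Cantelli} through a Borel--Cantelli argument.

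First I would establish joint tightness of the family
\[
\Bigl(\tilde{h}^N,\ A^N,\ \bigl\{R_k^N\bigl(x_1, -2^{-1/2} k^{1/2} x_2^{-1/2}\bigr)\bigr\}_{k \in \Z_{\geq 1},\, x_1, x_2 \in \Q^+}\Bigr)
\]
in the Polish product space $C(\R^2, \R) \times C(\N \times \R, \R) \times \R^{\Z_{\geq 1} \times (\Q^+)^2}$. The first marginal is tight by the preceding proposition; the second by Theorem \ref{thm_airy_line_ensemble}; and for each fixed $(k, x_1, x_2)$, Lemma \ref{lemma_F_to_Airy} yields distributional convergence of $F_k^N(x_1, -2^{-1/2} k^{1/2} x_2^{-1/2})$, hence of $R_k^N$ as a real random variable, since the two differ by a deterministic shift. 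Because the index set is countable, marginal tightness implies joint tightness. Prokhorov then produces a subsequence $\{M_i\} \subset \{N_i\}$ along which the joint law converges weakly, and the Skorokhod representation theorem realizes this convergence as almost sure convergence on a common probability space; call the a.s.\ limits $h$, $A$, and $R_k(x_1, z)$. Item (1) is then automatic, as is the $C(\R^2, \R)$-convergence $\tilde{h}^M \to h$ in item (2).

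For the identification $h(0, \cdot) = A_1(\cdot)$, I would unravel the definitions: when $x = 0$ one has $\overline{x}_N = 1$, and Corollary \ref{cor_invariance} collapses $\log D^N[(1, 2N) \rightarrow (\widehat{y}_N, 1)]$ to $Wf_1^N(\widehat{y}_N)$. A direct comparison with the definition of $A_1^N$, using $p = -h_\theta(1)/2$, gives $h^N(0, y) = 2^{-1/2} q\, A_1^N(y q^{-2}) + o(1)$, and the a.s.\ convergence $A_1^M \to A_1$ (with the scaling constants $2^{-1/2} q$ and $A_i^q(x) = q^{-1} A_i(q^2 x)$ from Theorem \ref{thm_airy_line_ensemble} absorbed into the definition of the coupled limit $A$) yields $h(0, y) = A_1(y)$.

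The main obstacle is \eqref{Borel_Cantelli}. By Lemma \ref{lemma_F_to_Airy}, the a.s.\ limit $R_k(x_1, -2^{-1/2} k^{1/2} x_2^{-1/2})$ has the same law as $A[(0, k+1) \xrightarrow{\infty} (x_1, 1)] - 2^{3/2} k^{1/2} x_1^{1/2}$, since the cross term $2zx_1$ in $F_k^N$ cancels the deterministic shift built into $R_k^N$. I would then establish a large-$k$ concentration estimate of the form
\[
\PP\!\left(\bigl|A[(0, k+1) \xrightarrow{\infty} (x_1, 1)] - 2^{3/2} k^{1/2} x_1^{1/2}\bigr| \geq \epsilon k^{1/2}\right) \leq C(\epsilon, x_1)\, e^{-c(\epsilon, x_1) k^{\alpha}}
\]
for some $\alpha > 0$, invoking standard large deviation estimates for last passage on the parabolic Airy line ensemble: the drift $2^{3/2} k^{1/2} x_1^{1/2}$ is the leading order of the last passage value, driven by the ballistic behavior of the stationary Airy process balanced against the parabolic shift $-t^2$. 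A Borel--Cantelli argument applied along a countable sequence $\epsilon_n \downarrow 0$ and all $(x_1, x_2) \in (\Q^+)^2$, combined with a diagonal refinement of the subsequence $\{M_i\}$ synchronising the tail control with the Skorokhod coupling, yields the almost sure limit $k^{-1/2}|R_k(x_1, -2^{-1/2} k^{1/2} x_2^{-1/2})| \to 0$. The hardest step is producing a concentration bound with an exponent $\alpha$ strong enough to make the Borel--Cantelli sum converge; once this is in hand, the remainder of the argument is bookkeeping.
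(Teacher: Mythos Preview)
Your approach is correct and essentially matches the paper's: joint tightness, Skorokhod representation along a subsequence, identification of the law of $R_k$ via Lemma \ref{lemma_F_to_Airy}, and Borel--Cantelli for \eqref{Borel_Cantelli}. Two remarks. First, the concentration estimate you flag as ``the hardest step'' is not something you need to prove from scratch: the paper simply invokes \cite[Theorem 6.3]{dauvergne2022directed}, which already gives $\sum_k \PP(|R_k| > \epsilon k^{1/2}) < \infty$ for each fixed $\epsilon > 0$ and $x_1$, so Borel--Cantelli applies directly. Second, the ``diagonal refinement of the subsequence $\{M_i\}$ synchronising the tail control with the Skorokhod coupling'' is unnecessary: once the Skorokhod coupling is fixed, the limits $R_k(x_1, -2^{-1/2}k^{1/2}x_2^{-1/2})$ are genuine random variables on that probability space with the identified law, and Borel--Cantelli runs on that same space with no further extraction.
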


\begin{proof}
By Lemma \ref{lemma_F_to_Airy}, we know that for fixed $x_1, x_2 > 0$, $R_k^N(x_1,-2^{-1/2}k^{1/2}x_2^{-1/2})$ converges in distribution as $N \rightarrow \infty$. Together with the tightness of $\{\tilde{h}^N\}_{N \in \Z_{\geq 1}}$ and the convergence of Airy line ensemble $\A^N$ in Theorem \ref{thm_airy_line_ensemble}, we can apply Skorokhod's representation theorem to extract a subsequence $\{M_i\}_{i \in \Z_{\geq 1}} \subset \{N_i\}_{i \in \Z_{\geq 1}}$ and construct a coupling under which the processes $\{\A^M,\tilde{h}^M\}$ converge almost surely and $R_k^M(x_1, -2^{-1/2}k^{1/2}x_2^{-1/2})$ converges almost surely for all $x_1, x_2 \in \Q^+$ along the subsequence $\{M_i\}_{i \in \Z_{\geq 1}}$. By Theorem \ref{thm_airy_line_ensemble}, the limit of $\A^M$ must be distributed as an Airy line ensemble, thus we denote it by $\A$. Let us also denote the limit of $\tilde{h}^M$ and $R_k^M(x_1, -2^{-1/2}k^{1/2}x_2^{-1/2})$ by $h$ and $R_k(x_1, -2^{-1/2}k^{1/2}x_2^{-1/2})$. The identity $h(0, \cdot) = \A_1(\cdot)$ follows from the definition of $\tilde{h}^M$.

By Lemma \ref{lemma_F_to_Airy}, we know that
\begin{equation}
R_k(x_1,-2^{-1/2}k^{1/2}x_2^{-1/2}) \dq \A[(0,k+1) \xrightarrow{\infty} (x_1, 1)] - 2^{3/2}k^{1/2}x_1^{1/2}.
\end{equation}
Moreover, by \cite[Theorem 6.3]{dauvergne2022directed}, we know that for any $\epsilon > 0$,
\begin{equation}
\sum_{k=1}^\infty \PP(|R_k(x_1,-2^{-1/2}k^{1/2}x_2^{-1/2})| > \epsilon k^{1/2}) < \infty.
\end{equation}
Hence, by Borel-Cantelli lemma, we conclude that $|k^{-1/2}R_k(x_1, -2^{-1/2}k^{1/2}x_2^{-1/2})|< \epsilon$ infinitely often almost surely. Therefore, with probability one,
\begin{equation}
\lim_{k \rightarrow \infty} |k^{-1/2}R_k(x_1, -2^{-1/2}k^{1/2}x_2^{-1/2})| =  0.
\end{equation}
\end{proof}

\begin{thm}\label{thm_airy_sheet}
    Let $h$ be any distributional limit of $\tilde{h}^N$ along some subsequence. Then there exists a coupling of $h$ and the Airy line ensemble $\A$ such that the following holds:
    \begin{enumerate}
        \item $h(0,\cdot) = \A_1(\cdot)$
        \item $h(\cdot + t, \cdot + t) \dq h(\cdot, \cdot)$ 
        \item Almost surely, for all $x > 0$ and $y_1, y_2 \in \R$, we have
        \begin{equation}
        \begin{split}
            &\lim_{k \rightarrow \infty} \A[(-2^{1/2}k^{1/2}x^{-1/2},k) \xrightarrow{\infty} (y_2,1)] - \A[(-2^{1/2}k^{1/2}x^{-1/2},k) \xrightarrow{\infty} (y_1,1)] =h(x,y_2) - h(x,y_1)
            \end{split}
        \end{equation}
    \end{enumerate}
    Then, $h$ has the same law as the Airy sheet. Hence $\tilde{h}^N$ converges in distribution to the Airy sheet.
\end{thm}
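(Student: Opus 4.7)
The plan is to verify the three characterizing properties of the Airy sheet on a suitably chosen coupling, and then invoke the uniqueness result from \cite{dauvergne2022directed}. First, I will apply Proposition \ref{coupling} to pass to a subsequence $\{M_i\}$ and construct a coupling on which, almost surely, $\tilde{h}^{M_i} \to h$ and $A^{M_i} \to A$ uniformly on compacts, and $R_k^{M_i}(x_1, -2^{-1/2} k^{1/2} x_2^{-1/2})$ converges for every $k \in \Z_{\geq 1}$ and every $x_1, x_2 \in \Q^+$. Property (1) is immediate from the construction in Proposition \ref{coupling}. Property (2) follows from the translation invariance of the i.i.d.\ environment $\{d_{i,j}\}$: the distributional identity $\tilde{h}^N(x+t, y+t) \overset{d}{=} \tilde{h}^N(x,y)$ holds in the pre-limit up to a lattice rounding discrepancy of constant size which vanishes after the $N^{-1/3}$ rescaling, and this invariance passes to the limit $h$.

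The core of the argument is property (3). The plan is to sandwich $h^N(x, y_2) - h^N(x, y_1)$ by differences of $G_k^N$ via Corollary \ref{h_S}, and then to control the resulting error terms through Corollary \ref{hR}. Concretely, fix $x > 0$ and $y_1, y_2 \in \R$, pick rational $x_1 < x < x_2$, and set $z = -2^{-1/2} k^{1/2} x^{-1/2}$. Corollary \ref{h_S} then bounds $|(G_k^N(z, y_2) - G_k^N(z, y_1)) - (h^N(x, y_2) - h^N(x, y_1))|$ by
\[\max\bigl(-\log(1 - A_k^N(x, y_2; \widehat{z}_N + 1)),\ -\log(1 - B_k^N(x, y_1; \widehat{z}_N - 1))\bigr),\]
while Corollary \ref{hR} yields the upper bound
\[\log B_k^N(x, y_1; \widehat{z}_N - 1) \leq (h^N(x_1, y_1) - h^N(x, y_1)) + (R_k^N(x, z) - R_k^N(x_1, z)) - 2^{1/2} k^{1/2} x_1^{1/2}(1 - x x_1^{-1})^2,\]
together with the analogous upper bound for $\log A_k^N$ using $x_2 > x$.

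Passing $N = M_i \to \infty$ along the coupled subsequence, the $h^N$ and $R_k^N$ differences converge to their $h$ and $R_k$ counterparts, while the lemma at the end of Section \ref{section3} gives $G_k^{M_i}(z, y_j) \to A[(z, k) \xrightarrow{\infty} (y_j, 1)]$ for $j \in \{1, 2\}$. The term $-2^{1/2} k^{1/2} x_1^{1/2}(1 - x/x_1)^2$ is of order $-k^{1/2}$ as long as $x_1 \neq x$, whereas by (\ref{Borel_Cantelli}) the $R_k$ contributions are $o(k^{1/2})$. Thus $\log A_k^{M_i}$ and $\log B_k^{M_i}$ both tend to $-\infty$ as $k \to \infty$, making $\log(1 - A_k^{M_i})$ and $\log(1 - B_k^{M_i})$ negligible. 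Sending $N \to \infty$ first and then $k \to \infty$ yields
\[\bigl|A[(z_k, k) \xrightarrow{\infty} (y_2, 1)] - A[(z_k, k) \xrightarrow{\infty} (y_1, 1)] - (h(x, y_2) - h(x, y_1))\bigr| \longrightarrow 0\]
with $z_k = -2^{-1/2} k^{1/2} x^{-1/2}$, which is property (3); irrational $x$ is handled by rational approximation together with continuity of $h$ and of last passage values on $A$. Once (1)--(3) hold, the characterization of the Airy sheet from \cite{dauvergne2022directed} forces $h$ to have the Airy sheet law, and since every subsequential limit is an Airy sheet while $\{\tilde{h}^N\}$ is tight, the full sequence converges in distribution. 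The main obstacle is the delicate two-limit interchange: one must fix $x_1, x_2$ rational and independent of $k$ so the $-k^{1/2}$ term dominates the $o(k^{1/2})$ contribution from $R_k$, but this leaves residual $h(x_1, y_j) - h(x, y_j)$ terms on the right-hand side which must then be absorbed by sending $x_1, x_2 \to x$ at the very end using continuity of $h$.
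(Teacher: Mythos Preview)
Your application of Corollary \ref{hR} is incorrect, and this is a genuine gap. That corollary is stated for the specific choice $z = -2^{-1/2}k^{1/2}x_1^{-1/2}$, where $x_1$ is the comparison point; you instead set $z = -2^{-1/2}k^{1/2}x^{-1/2}$ based on the center $x$. If you redo the algebra with your $z$, the deterministic part of $F_k^N(x,z) - F_k^N(x_1,z)$ becomes
\[
2z(x-x_1) + 2^{3/2}k^{1/2}(x^{1/2}-x_1^{1/2}) = +\,2^{1/2}k^{1/2}x^{-1/2}(x^{1/2}-x_1^{1/2})^2 > 0,
\]
so your upper bound on $\log B_k^N(x,y_1;\widehat{z}_N-1)$ blows up to $+\infty$ rather than $-\infty$ as $k\to\infty$, and the sandwich collapses. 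The point of choosing $z$ tied to the \emph{outer} rational point is precisely to make this term negative; but then you need two different $z$'s (one from $x_1<x$ for the $B$-bound, one from $x_2>x$ for the $A$-bound), and the $G_k^N$ differences you are controlling sit at $z_k(x_1)$ and $z_k(x_2)$, not at $z_k(x)$. Thus you never produce upper and lower bounds for the \emph{same} quantity, and you cannot conclude that the Airy last-passage limit at $z_k(x)$ even exists, let alone equals $h(x,y_2)-h(x,y_1)$.

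The paper resolves this by a device you are missing: it augments the probability space with an Airy sheet $S$ coupled to $A$ via the DOV construction, so that the Airy last-passage limits along $z_k(x_1)$ are already known to converge to $S(x_1,y_2)-S(x_1,y_1)$. With $z_k$ based on $x_1$ one then gets $h(x_2,y_2)-h(x_2,y_1)\ge S(x_1,y_2)-S(x_1,y_1)$ for rational $x_1<x_2$, and the symmetric inequality with $x_3>x_2$. Continuity of the \emph{known} object $S$ (not of the raw last-passage values) closes the sandwich, giving $h(x,\cdot)-h(x,\cdot)=S(x,\cdot)-S(x,\cdot)$ for all $x>0$, which is exactly property (3). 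Your final remark about handling irrational $x$ ``by continuity of last passage values on $A$'' is precisely the missing ingredient: that continuity is the content of the Airy sheet construction, and invoking it means you should be working with $S$ from the start.
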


\begin{proof}
Let $h$ be the distributional limit of $h^N$ along some sequence $\{N_i\}_{i \in \Z_{\geq 1}}$. From Proposition \ref{coupling}, we can find a sequence $\{M_i\}_{i \in \Z_{\geq 1}}$ such that the assertions in Proposition \ref{coupling} hold. We can further augment the probability space to accomodate an Airy sheet $\SSS$ such that on an event with probability one,
\begin{equation}\label{Airy Sheet}
    \begin{split}
            &\lim_{k \rightarrow \infty} \A[(-2^{1/2}k^{1/2}x^{-1/2},k) \xrightarrow{\infty} (y_2,1)] - \A[(-2^{1/2}k^{1/2}x^{-1/2},k) \xrightarrow{\infty} (y_1,1)] =\SSS(x,y_2) - \SSS(x,y_1)
            \end{split}
\end{equation}
for all $x > 0$ and $y_1,y_2 \in \R$. Fix the event $\Omega_0$ with probability one such that for any $\omega \in \Omega_0$, all the assertions in Proposition \ref{coupling} and (\ref{Airy Sheet}) hold. We will show that when $\Omega_0$ occurs,
    \begin{equation}
    h(x,y_2) - h(x,y_1) = \SSS(x,y_2) - \SSS(x,y_1) 
    \end{equation}
    for all $x > 0$ and $y_1,y_2 \in \R$.

    Fix $x_1 < x_2 \in \Q^{+}$ and $y_1 \leq y_2 \in \Q$. We want to show that 
    \begin{equation}
    h(x_2,y_2) - h(x_2,y_1) \geq \SSS(x_1,y_2) - \SSS(x_1,y_1).
    \end{equation}
    Let $z_k = -2^{-1/2}k^{1/2}x_1^{-1/2}$. By Corollary \ref{h_S}, we have
    \begin{equation}\label{h_G}
    G_k^N(z_k,y_1)- G_k^N(z_k,y_2) + h^N(x_2,y_2) - h^N(x_2,y_1) \geq  \log (1-B_k^N(\overline{x}_{2,N},\widehat{y}_{1,N};\widehat{z}_{k,{N}}-1))    
    \end{equation}
    Because $\tilde{h}^N$ is the linear interpolation of $h^N$, if $\tilde{h}^N$ converges to some function $h$ uniformly on compact subsets, then so does $h^N$. Thus,
    \begin{equation}
    \lim_{k \rightarrow \infty}\lim_{\substack{N \in \{M_i\}\\ i \rightarrow \infty}}(\text{LHS of }(\ref{h_G}) ) = \SSS(x_1,y_1)- \SSS(x_1,y_2) + h(x_2,y_2) - h(x_2,y_1).
    \end{equation}
    Hence it suffices to show that 
    \begin{equation}
     \lim_{k \rightarrow \infty}\lim_{\substack{N \in \{M_i\}\\ i \rightarrow \infty}}(\log B_k^N(\overline{x}_{2,N},\widehat{y}_{1,N}; \widehat{z}_{k,{N}})) = - \infty.
    \end{equation}
    This follows by applying Corollary \ref{hR} and Proposition \ref{coupling}. A similar argument yields
    \begin{equation}
    h(x_2,y_2) - h(x_2,y_1) \leq \SSS(x_3,y_2) - \SSS(x_3,y_1)
    \end{equation}
    for all $x_2 < x_3 \in \Q^{+}$ and $y_1 \leq y_2 \in \Q$. By continuity, 
    \begin{equation}
    h(x,y_2) - h(x,y_1) = \SSS(x,y_2) - \SSS(x,y_1)
    \end{equation}
    for all $x > 0$ and $y_1, y_2 \in \R$.
    
    Finally, we verify the stationarity property $h(\cdot + t, \cdot + t) \dq h(\cdot,\cdot)$. This follows from the corresponding property for the prelimit objects: as processes on the discrete lattice $N^{-2/3}q^{2}\Z \times N^{-2/3}q^{2}\Z$, we have \begin{equation}\label{equation_invariance}
    h^N(\cdot + t(N),\cdot + t(N)) \dq h^N(\cdot, \cdot)
    \end{equation}
    for $t(N) \in N^{-2/3}q^{2}\Z$. 
    Since the mesh size $N^{-2/3}q^2$ vanishes as $N \rightarrow \infty$, we can find a sequence $\{t(N)\}_{N \in \N} \subset N^{-2/3}q^{2}\Z$ such that $t(N) \rightarrow t$. Taking limit on both sides of (\ref{equation_invariance}) yields the desired result.
\end{proof}

\section{Convergence to the directed landscape}
Recall the notation $\R^4_+ = \{(x,s;y,t) \in \R^4: s < t\}$ for the domain of time-ordered space-time points.

\begin{define}\label{def_lg_landscape}
    For $t, x \in \R$, we define the scaling operations $\overline{x}_N = \lfloor N^{2/3}xq^{-2} \rfloor +1$, $t_N = \lfloor 2Nt \rfloor$. Given $(x,s;y,t) \in \R_+^4$, we define the unscaled and scaled log-gamma landscape as follows:
    \begin{equation}
    \begin{split}
        \h^N(x,s;y,t) &= \log Z[(\overline{x}_N + s_N, s_N) \rightarrow (\overline{y}_N + t_N-1,t_N-1)] -p(\overline{y}_N - \overline{x}_N + 4N(t-s)),
    \end{split}
    \end{equation}
    \begin{equation}
    \begin{split}
    h^N(x,s;y,t)= 2^{-1/2}q\sigma_pN^{-1/3} \h^N(x,s;y,t).
    \end{split}
    \end{equation}
\end{define}

\subsection{Tightness}
Let $\tilde{h}^N(x,s;y,t)$ be the linear interpolated $h^N(x,s;y,t)$ such that they agree on tuples $(x,s;y,t) \in \R^4_+$ where $x,y \in N^{-2/3}q^2\Z$ and $s,t \in 2^{-1}N^{-1} \Z$. The goal of this section is to prove the tightness of $\tilde{h}^N(x,s;y,t)$ in $C(\R^4_+,\R)$. To achieve this goal, we will use the following lemma by Dauvergne and Virag \cite[Lemma 3.3]{dauvergne2021bulk}.

\begin{lemma}\label{DV21}
    Let $Q = I_1 \times \cdots \times I_d$ denote the Cartesian product of real intervals with lengths $L_1, \ldots, L_d$. Let $\mathcal{G}:Q \rightarrow \R$ be a continuous random function. Let $e_i$ be the $i$-th coordinate vector of $\R^d$. Suppose that for some $a,c >0$ and for each $i \in \{1, 2, \ldots, d\}$, there exist constants $\alpha_i \in (0,1)$, $\beta_i, r_i > 0$ such that for all $m>0$, $u \in (0,r_i)$ and $t,t+ue_i \in Q$, the following inequality holds:
\begin{equation}
\mathbb{P}\left(|\mathcal{G}(t + u e_i) - \mathcal{G}(t)| \geq m u^{\alpha_i}\right) \leq c e^{-a m^{\beta_i}}.
\end{equation}
Let $\beta = \min_i \beta_i$, $\alpha = \max_i \alpha_i$, and $r = \max_i r_i^{\alpha_i}$. Then, with probability one, for all $t, t+s \in Q$ such that $|s_i| \leq r_i$ for all $i$, we have

\begin{equation}
|\mathcal{G}(t + s) - \mathcal{G}(t)| \leq C \left(\sum_{i=1}^{d} |s_i|^{\alpha_i} \log^{1/\beta_i} \left(\frac{2 r^{1/\alpha_i}}{|s_i|}\right)\right),
\end{equation}
where $C$ is a random constant with the tail bound:
\begin{equation}
\mathbb{P}(C > m) \leq \left(\prod_{i=1}^{d} \frac{b_i}{r_i}\right) c c_0 e^{-c_1 m^{\beta}},
\end{equation}
for some constants $c_0, c_1 > 0$ depending only on on $\{\alpha_i\}, \{\beta_i\}, d$ and $a$.
\end{lemma}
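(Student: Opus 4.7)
The plan is to prove Lemma \ref{DV21} by a multi-parameter dyadic chaining argument, generalizing the Kolmogorov--Chentsov continuity theorem so that the different exponents $\alpha_i, \beta_i, r_i$ are tracked coordinate by coordinate. First, for each $i$ and $n \in \mathbb{N}$ I would introduce the dyadic grid $D_{n,i}$ of mesh $u_{n,i} = L_i 2^{-n}$ on $I_i$, and set $D_n = \prod_i D_{n,i}$. With thresholds $m_{n,i} := K + K' n^{1/\beta_i}$, the pointwise hypothesis together with a union bound over the $O(2^{nd})$ axis-aligned nearest-neighbor pairs in direction $i$ at scale $n$ (for $n$ large enough that $u_{n,i} < r_i$) gives
\begin{equation*}
\mathbb{P}\bigl(\exists\text{ pair in direction }i\text{ at scale }n\text{ with increment}\geq m_{n,i} u_{n,i}^{\alpha_i}\bigr) \leq c\, 2^{nd} e^{-a m_{n,i}^{\beta_i}}.
\end{equation*}
For $K'$ chosen large enough depending on $d, a, \{\beta_i\}$, one has $m_{n,i}^{\beta_i} \gtrsim n$, so these probabilities are summable in $n$, and summing further over $i$ yields a global bad event $E_K$ of probability at most $c_0 e^{-c_1 K^{\beta}}$ with $\beta = \min_i \beta_i$.

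On the complement of $E_K$, for any $t, t+s \in Q$ with $|s_i| < r_i$, I would telescope
\begin{equation*}
\mathcal{G}(t+s) - \mathcal{G}(t) = \lim_{n \to \infty} \bigl[\mathcal{G}((t+s)^{(n)}) - \mathcal{G}(t^{(n)})\bigr]
\end{equation*}
through the dyadic projections $t^{(n)}, (t+s)^{(n)} \in D_n$, decomposing each consecutive difference into at most $d$ axis-aligned steps of length $u_{n+1,i}$. Starting the telescoping in direction $i$ at scale $n_0(s,i) \asymp \log_2(L_i/|s_i|)$ and bounding each step by $m_{n,i} u_{n,i}^{\alpha_i}$, the sum $\sum_{n \geq n_0(s,i)} m_{n,i} u_{n,i}^{\alpha_i}$ is geometric and dominated by its leading term, which is of order $K\, |s_i|^{\alpha_i}\log^{1/\beta_i}(L_i/|s_i|)$. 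This produces the claimed bound with $C$ proportional to the smallest $K$ for which $E_K$ does not occur; the tail $\mathbb{P}(C > m) \leq c c_0 e^{-c_1 m^{\beta}}$ then follows directly, with the $\prod_i (b_i/r_i)$ prefactor tracking the initial dyadic scales $n_i^\star \asymp \log_2(L_i/r_i)$ below which the pointwise hypothesis is inapplicable.

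The main obstacle I anticipate is the coordinate-wise bookkeeping in the telescoping: the chaining must be sequenced so that a step in direction $i$ is only ever compared via pairs for which the pointwise hypothesis applies in that direction, so that the $i$-th coordinate contributes exactly a $\log^{1/\beta_i}$ factor rather than a uniform $\log^{1/\beta}$. Concretely, this means moving through the $d$ axes in a fixed order inside each dyadic refinement and picking the starting scale $n_0(s,i)$ separately per coordinate. Once this is arranged, the per-coordinate tail sums stitch together cleanly to yield the stated modulus of continuity and the stretched-exponential tail bound on $C$.
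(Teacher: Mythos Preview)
The paper does not prove this lemma; it is quoted verbatim as \cite[Lemma 3.3]{dauvergne2021bulk} and used as a black box. Your chaining sketch is the standard Kolmogorov--Chentsov argument adapted coordinate-wise, which is exactly how such statements are established (and is the approach in the cited reference), so there is nothing to compare against in this paper.
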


Because $\tilde{h}^N$ is the linear interpolated version of $h^N$, it is difficult to directly prove the tail bound for $\tilde{h}^N$ as required in Lemma \ref{DV21}. However, the following proposition justifies the sufficiency to only prove the tail bound for the integer points of $h^N$.

\begin{prop}\label{prop_discrete_to_ctns}
    Suppose that there exists positive constants $C_1, C_2, r$ and $N_0 \in \N$ such that for all $N \geq N_0$, $d_1 \in (0,r] \cap N^{-2/3}q^2 \Z$, $d_2 \in (0,r] \cap 2^{-1}N^{-1} \Z$, $x,y \in N^{-2/3}q^2 \Z$, $s,t \in 2^{-1}N^{-1}\Z$, $s < t$, and every $K \geq 0$, we have
    \begin{equation}\label{equation_tail}
    \PP(|h^N(x,s;y,t+d_2) - h^N(x,s;y,t)| \geq Kd_2^{1/3}) \leq C_1e^{-C_2K^{1/10}}
    \end{equation}
    \begin{equation}
    \PP(|h^N(x,s+d_2;y,t) - h^N(x,s;y,t)| \geq Kd_2^{1/3}) \leq C_1e^{-C_2K^{1/10}}
    \end{equation}
    \begin{equation}
    \PP(|h^N(x,s;y+d_1,t) - h^N(x,s;y,t)| \geq Kd_1^{1/3}) \leq C_1e^{-C_2K^{1/10}}
    \end{equation}
    \begin{equation}
    \PP(|h^N(x+d_1,s;y,t) - h^N(x,s;y,t)| \geq Kd_1^{1/3}) \leq C_1e^{-C_2K^{1/10}}.
    \end{equation}
    Then all of the inequalities naturally extend to $\tilde{h}^N$ with different constants $\tilde{C}_1, \tilde{C}_2$.
\end{prop}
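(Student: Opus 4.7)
The plan is to exploit that $\tilde{h}^N$ is the multilinear interpolation of $h^N$ across the mesh cells, which lets me write any coordinate increment of $\tilde{h}^N$ as a convex combination of increments at lattice vertices in the three frozen coordinates, and then reduce the remaining one-dimensional interpolation in the varying coordinate to honest $h^N$-differences at lattice points, where the hypothesis applies. I treat the $t$-direction in detail; the $s$-, $x$- and $y$-directions will be identical upon substituting the mesh size $N^{-2/3}q^2$ for $(2N)^{-1}$ where appropriate.

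Fix $(x,s;y,t)$ and $d_2 \in (0,r]$. Let $\{(x_{\epsilon_1}, s_{\epsilon_2}; y_{\epsilon_3})\}_{\vec{\epsilon}\in\{0,1\}^3}$ be the eight vertices of the $(x,s,y)$-mesh cell containing $(x,s,y)$, with trilinear weights $w_{\vec{\epsilon}}(x,s,y) \in [0,1]$ summing to one. Multilinearity of $\tilde{h}^N$ in the three frozen coordinates gives
\begin{equation*}
\tilde{h}^N(x,s;y,t+d_2) - \tilde{h}^N(x,s;y,t) = \sum_{\vec{\epsilon}} w_{\vec{\epsilon}}(x,s,y)\bigl[\tilde{h}^N(x_{\epsilon_1},s_{\epsilon_2};y_{\epsilon_3},t+d_2) - \tilde{h}^N(x_{\epsilon_1},s_{\epsilon_2};y_{\epsilon_3},t)\bigr].
\end{equation*}
After a union bound over the eight vertices it suffices to control each one-dimensional function $g(\tau) = \tilde{h}^N(x_{\epsilon_1},s_{\epsilon_2};y_{\epsilon_3},\tau)$, which is the piecewise linear interpolation of $\tau \mapsto h^N(\cdots,\tau)$ at the $t$-lattice points $(2N)^{-1}\Z$.

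Let $t_L$ be the smallest $t$-lattice point that is $\geq t$ and $t_R$ the largest that is $\leq t+d_2$. If $t_L \leq t_R$ I split
\begin{equation*}
g(t+d_2) - g(t) = [g(t_L) - g(t)] + [h^N(\cdots,t_R) - h^N(\cdots,t_L)] + [g(t+d_2) - g(t_R)],
\end{equation*}
and if $t_L > t_R$ the entire increment sits in a single mesh cell. The middle piece is a genuine lattice-to-lattice difference of $h^N$ with spacing $t_R - t_L \leq d_2 \leq r$, so the hypothesis applies directly. Each boundary (or single-cell) piece has length $\ell \leq (2N)^{-1}$ and, by linearity of $g$ within a single mesh cell, equals $\kappa[h^N(\cdots,\tau_1)-h^N(\cdots,\tau_0)]$ for a pair of adjacent lattice points $\tau_0 < \tau_1$ and a constant $|\kappa|\leq 2N\ell$, reducing again to a single lattice difference.

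The only real step is to verify that splitting the threshold $Kd_2^{1/3}$ into three pieces of size $Kd_2^{1/3}/3$ via a union bound preserves the stretched-exponential form. For the middle piece, the effective parameter in the hypothesis is $Kd_2^{1/3}/(3(t_R-t_L)^{1/3}) \geq K/3$. For a boundary piece of length $\ell$, the lattice difference must exceed $Kd_2^{1/3}/(3\cdot 2N\ell)$, which written in the form $K^{**}(2N)^{-1/3}$ has $K^{**} = (K/3)(d_2/\ell)^{1/3}(2N\ell)^{-2/3} \geq K/3$, where I used $\ell \leq d_2$ and $2N\ell \leq 1$. A union bound over the three pieces and the eight vertices then gives the claim with $\tilde{C}_1 = 24C_1$ and $\tilde{C}_2 = C_2/3^{1/10}$. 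The main obstacle is precisely this rescaling accounting for the off-lattice boundary pieces: one must check that $K^{**}$ never shrinks below a constant multiple of $K$ when the increment is cut against a full mesh cell, and the inequalities $\ell \leq \min(d_2,(2N)^{-1})$ make this automatic.
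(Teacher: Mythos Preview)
Your proof is correct and follows essentially the same approach as the paper's: reduce to lattice vertices in the frozen coordinates via the multilinear structure, then in the remaining coordinate split the increment into at most three pieces (two boundary slivers inside single mesh cells and one honest lattice-to-lattice jump), and check that the effective $K$-parameter in each piece stays bounded below by a constant multiple of $K$. The only cosmetic difference is that the paper relaxes the three frozen coordinates one at a time inductively, whereas you handle them simultaneously with the trilinear weights over eight vertices; this changes nothing substantive.
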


\begin{proof}
    We will prove the extension of (\ref{equation_tail}) and the rest will follow analogously. Let us first assume that $x,y \in N^{-2/3}q^2\Z$ and $s \in 2^{-1}N^{-1}\Z$. We want to bound the following probability
    \begin{equation}\label{equation_discrete_h}
    \PP(|\tilde{h}^N(x,s;y,t_2) - \tilde{h}^N(x,s;y,t_1)| \geq K|t_1 - t_2|^{1/3}).
    \end{equation}
    We first consider the case when $\lfloor 2Nt_1 \rfloor = \lfloor 2Nt_2 \rfloor$. In this setting, we let $w_1 = \lfloor 2Nt_1 \rfloor$ and $w_2 = \lceil 2Nt_1 \rceil$. Since $\tilde{h}^N$ is a linear interpolation of $h^N$,
    \[
    |\tilde{h}^N(x,s;y,t_2) - \tilde{h}^N(x,s;y,t_1)| = |h^N(x,s;y,w_1) - h^N(x,s;y,w_2)|\frac{|t_1 - t_2|}{|w_1 - w_2|}.
    \]
    Thus, the probability (\ref{equation_discrete_h}) is bounded by
    \[
    \begin{split}
        &\PP\left(|\tilde{h}^N(x,s;y,w_2) - \tilde{h}^N(x,s;y,w_1)| \geq K \left(\frac{|w_1-w_2|}{|t_1 - t_2|}\right)^{2/3}|w_1 - w_2|^{1/3}\right)\\
        &\leq \PP(|\tilde{h}^N(x,s;y,w_2) - \tilde{h}^N(x,s;y,w_1)| \geq K|w_1 - w_2|^{1/3})\\
        & \leq C_1e^{-C_2K^{1/10}}.
    \end{split}
    \]
    For the remaining case, we may assume without loss of generality that $\lfloor 2Nt_1 \rfloor < \lfloor 2Nt_2 \rfloor$. In this case, set $w_1 = \lceil 2Nt_1 \rceil$ and $w_2 = \lfloor 2Nt_2 \rfloor$. Thus, the probability (\ref{equation_discrete_h}) is bounded by
    \[
    \begin{split}
        &\PP(|\tilde{h}^N(x,s;y,t_2) - \tilde{h}^N(x,s;y,w_2)| \geq \frac{1}{3}K|w_2 - t_2|^{1/3})\\
        & \quad + \PP(|\tilde{h}^N(x,s;y,t_1) - \tilde{h}^N(x,s;y,w_1)| \geq \frac{1}{3}K|t_1 - w_1|^{1/3})\\
        & \quad + \PP(|\tilde{h}^N(x,s;y,w_2) - \tilde{h}^N(x,s;y,w_1)| \geq \frac{1}{3}K|w_1 - w_2|^{1/3}).
    \end{split}
    \]
    Here, the first two terms can be bounded using the argument from the first case, while the last term is controlled directly via equation (\ref{equation_tail}). 

    We now proceed to relax the constraint on the remaining coordinates one at a time. Suppose $x \in N^{-2/3}q^2 \Z$, $s \in 2^{-1}N^{-1}\Z$, and $y \in \R$. Let $y = \lambda y_1 + (1-\lambda)y_2$ for some $\lambda \in [0,1)$ and $y_1, y_2 \in N^{-2/3}q^2 \Z$. Then,
    \[
    \tilde{h}^N(x,s;y,t_1) = \lambda \tilde{h}^N(x,s;y_1,t_1) + (1-\lambda)\tilde{h}^N(x,s;y_2,t_1)
    \]
    \[
    \tilde{h}^N(x,s;y,t_2) = \lambda \tilde{h}^N(x,s;y_1,t_2) + (1-\lambda)\tilde{h}^N(x,s;y_2,t_2).
    \]
    Thus, the probability (\ref{equation_discrete_h}) can be bounded by the following
    \begin{equation}
        \begin{split}
         &\PP(\lambda|\tilde{h}^N(x,s;y_1,t_1) - \tilde{h}^N(x,s;y_1,t_2)| \geq \lambda K|t_1 - t_2|^{1/3})\\
        & \quad + \PP((1-\lambda)|\tilde{h}^N(x,s;y_2,t_1) - \tilde{h}^N(x,s;y_2,t_2)| \geq (1-\lambda)K|t_1 - t_2|^{1/3}). 
        \end{split}
    \end{equation}
    It then follows from the above argument. The constraints for the remaining coordinates can be relaxed in this inductive manner.
\end{proof}

In view of Proposition \ref{prop_discrete_to_ctns}, it suffices to establish the desired tail bounds at lattice points. In this setting, we can exploit the following symmetries arising from the i.i.d. environment of the log-gamma polymer.

\begin{lemma}\label{lemma_symmetry}
    Let $\text{Lattice}_N = \{(x,s;y,t)| s < t,s,t \in 2^{-1}N^{-1}\Z, x,y \in N^{-2/3}q^2\Z\}$. As a random function from $\text{Lattice}_N$ to $\R$, $h^N$ has the following symmetries: for any $s,t,a \in 2^{-1}N^{-1}\Z$ and $x,y,b \in N^{-2/3}q^{2}\Z$,
    \begin{equation}
    h^N(x+b,s+a;y+b,t+a) \dq h^N(x,s;y,t)
    \end{equation}
    \begin{equation}
    h^N(x,s;y,t) \dq h^N(-y,-t;-x,-s).
    \end{equation}
\end{lemma}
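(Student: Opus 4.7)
The plan is to deduce both symmetries directly from the i.i.d.\ structure of the environment $\{d_{i,j}\}_{i,j \in \Z}$, with essentially no probabilistic input beyond the invariance of its joint law under deterministic bijections of $\Z^2$. For the first identity, I first observe that when $b \in N^{-2/3}q^{2}\Z$ and $a \in 2^{-1}N^{-1}\Z$, the quantities $N^{2/3}bq^{-2}$ and $2Na$ are integers, so $\overline{(x+b)}_N = \overline{x}_N + N^{2/3}bq^{-2}$ and $\check{(s+a)}_N = \check{s}_N + 2Na$, and analogously for $y$ and $t$. Hence the starting and ending points of the partition function defining $\h^N(x+b, s+a; y+b, t+a)$ differ from those of $\h^N(x,s;y,t)$ by the common integer vector $(N^{2/3}bq^{-2} + 2Na,\ 2Na)$. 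Since $\{d_{i,j}\}$ is i.i.d., simultaneous integer translation of endpoints preserves the joint law of $\log D[\cdot \to \cdot]$, while the deterministic prefactor $-p(\overline{y}_N - \overline{x}_N + 4N(t-s))$ is manifestly unchanged under $(x,y,s,t) \mapsto (x+b, y+b, s+a, t+a)$.

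For the second identity, the core ingredient is the involution $\Phi : (i,j) \mapsto (-i,-j)$ of $\Z^2$, which sends every up-right lattice path $\pi$ from $u$ to $v$ to an up-right lattice path $\Phi(\pi)$ from $-v$ to $-u$, simply by reflecting the vertex set and reversing the orientation. Defining the reflected environment $\tilde d_{i,j} := d_{-i,-j}$, the weight of $\Phi(\pi)$ under $\tilde d$ equals the weight of $\pi$ under $d$, which yields the pathwise identity $\tilde D[-v \to -u] = D[u \to v]$. Combined with $\tilde d \overset{d}{=} d$, which is immediate from the i.i.d.\ property, this gives the distributional identity $D[u \to v] \overset{d}{=} D[-v \to -u]$, which I would apply with $u = (\overline{x}_N + \check{s}_N + 1, \check{s}_N + 1)$ and $v = (\overline{y}_N + \check{t}_N, \check{t}_N)$.

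It then remains to verify that the endpoints $(-v,-u)$ obtained this way agree, up to a common integer translation, with the endpoints defining $\h^N(-y,-t;-x,-s)$, and that the prefactors coincide. Using $x,y \in N^{-2/3}q^{2}\Z$ and $s,t \in 2^{-1}N^{-1}\Z$, the floor functions disappear and one obtains $\overline{(-x)}_N = -\overline{x}_N + 2$ and $\check{(-s)}_N = -\check{s}_N$, so both the start and the end of $\h^N(-y,-t;-x,-s)$ are shifted from $-v$ and $-u$, respectively, by the same integer vector $(3,1)$. This shift is absorbed by the integer translation invariance already established in the first identity. The deterministic prefactor matches on the nose since $\overline{(-x)}_N - \overline{(-y)}_N = \overline{y}_N - \overline{x}_N$ and $(-s) - (-t) = t - s$. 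The main (and only) obstacle is this bookkeeping of the small constant shifts produced by the $+1$'s and floor operations in the definitions of $\overline{\cdot}_N$ and $\check{\cdot}_N$; once those are collapsed into a single integer translation, both identities reduce to i.i.d.\ invariance.
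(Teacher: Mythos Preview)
Your argument is correct and complete: both symmetries reduce, as you show, to the invariance of the i.i.d.\ environment $\{d_{i,j}\}_{i,j\in\Z}$ under integer translation and under the point reflection $(i,j)\mapsto(-i,-j)$, together with careful bookkeeping of the $+1$'s in the definitions of $\overline{\cdot}_N$ and $\check{\cdot}_N$; the resulting constant shift $(3,1)$ in the second identity is absorbed by translation invariance exactly as you indicate. The paper does not supply a proof of this lemma (it is stated and then used), so there is nothing to compare against; your write-up is the natural justification the paper leaves implicit.
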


Consequently, it suffices to establish only the following two probability bounds, as the remaining cases in Lemma \ref{DV21} follow by symmetry.
\begin{prop}\label{prop_tail_bounds}
    Fix $t_0 > 0$ and $M >0$. There exists positive constants $C_1, C_2,r_0 > 0$ and $N_0 \in \Z_{\geq 1}$ such that for any $N \in \Z_{\geq N_0}$, $d_1 \in (0,r_0] \cap 2^{-1}N^{-1}\Z$ and $d_2 \in (0,r_0] \cap N^{-2/3}q^2\Z$, $K \geq 0$, $y \in N^{-2/3}q^2\Z, |y| \leq M$, and $t \in 2^{-1}N^{-1}\Z \cap (t_0, \infty)$, we have
    \begin{equation}
    \PP(|h^N(0,0;y;t+d_1) - h^N(0,0;y,t)| \geq Kd_1^{1/3}) \leq C_1e^{-C_2K^{1/10}}
    \end{equation}
    \begin{equation}
    \PP(|h^N(0,0;y+d_2;t) - h^N(0,0;y,t)| \geq Kd_2^{1/3}) \leq C_1e^{-C_2K^{1/10}}.
    \end{equation}
\end{prop}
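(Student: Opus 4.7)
The plan is to extend the on-diagonal local fluctuation estimates of \cite{basu2024temporal} to the off-diagonal setting needed here, handling the temporal and spatial tail bounds separately.

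For the \emph{temporal} bound, write $T=\check{t}_N=\lfloor 2Nt\rfloor$ and $T'=\lfloor 2N(t+d_1)\rfloor$, so $T'-T\approx 2Nd_1$. Every up-right path from $(1,1)$ to the endpoint $(\overline{y}_N+T',T')$ crosses the row at height $T$ at a unique column, and by the same combinatorics underlying Lemma \ref{Bisection} (applied to partition functions rather than free energies) we obtain the decomposition
\begin{equation*}
Z\bigl[(1,1)\rightarrow(\overline{y}_N+T',T')\bigr] \;=\; \sum_{z} Z\bigl[(1,1)\rightarrow(z,T)\bigr]\cdot Z\bigl[(z,T+1)\rightarrow(\overline{y}_N+T',T')\bigr].
\end{equation*}
Combining the monotonicity inequality (Lemma \ref{lemma_monotonicity_partitionfunction}) with sharp one-point moderate-deviation bounds for $\log Z$ along off-characteristic directions, I would show that this sum concentrates on columns $z$ inside an $O(N^{2/3})$ window around $\overline{y}_N+T$. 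On that event the first factor is comparable to $Z[(1,1)\rightarrow(\overline{y}_N+T,T)]$, so the difference $\h^N(0,0;y,t+d_1)-\h^N(0,0;y,t)$ reduces to the free energy of a short polymer of height $T'-T\approx 2Nd_1$, whose KPZ-scale fluctuation is of order $(Nd_1)^{1/3}$; the $N^{-1/3}$ prefactor in the definition of $h^N$ then delivers the target scale $d_1^{1/3}$.

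For the \emph{spatial} bound, I would use the quadrangle inequality in Lemma \ref{lemma_monotonicity_partitionfunction} to express the endpoint shift $y\mapsto y+d_2$ as a monotone telescoping increment and apply the translation symmetry (Lemma \ref{lemma_symmetry}) to align each sub-increment with a reference near-diagonal configuration where one-point log-gamma tail bounds can be invoked. Summing and optimizing this chaining across dyadic sub-windows gives the stretched-exponential bound with exponent $K^{1/10}$; the rather loose exponent is deliberately chosen so as to absorb the polynomial losses incurred at each scale.

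The main obstacle powering both steps is the off-diagonal localization estimate. In the diagonal case ($y=0$) treated in \cite{basu2024temporal}, the polymer characteristic from $(1,1)$ to the endpoint is aligned with the axes, so both the crossing-column concentration at row $T$ and the spatial telescoping reduce, after symmetrization, to bounds for the stationary log-gamma measure. For $|y|\le b$ with arbitrary slope $y/t$ the characteristic is tilted and no such reduction to the stationary setup is available; the crucial new ingredient is a uniform off-diagonal moderate-deviation bound for $\log Z[(u_1,v_1)\rightarrow(u_2,v_2)]$, with stretched-exponential decay, valid for every endpoint pair whose slope lies in a compact interval bounded away from $0$ and $\infty$. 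Establishing this bound together with its matching lower-tail counterpart is the principal technical task, and the mild $K^{1/10}$ rate in Proposition \ref{prop_tail_bounds} is precisely what can be supported by such a multi-scale estimate.
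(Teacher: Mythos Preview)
Your temporal strategy matches the paper's skeleton: decompose at the intermediate height, localize the crossing column, and reduce to a short polymer of height $\approx 2Nd_1$. The gap is in the step ``on that event the first factor is comparable to $Z[(1,1)\to(\overline y_N+T,T)]$.'' That comparison is an off-diagonal \emph{line-to-point} estimate --- controlling $\log Z_{0,L_w^a}-\log Z_{0,w}$ for $w$ off the diagonal --- and it does not follow from one-point moderate-deviation bounds; those are already available off-diagonal in \cite{basu2024temporal} and are not the missing ingredient. What the paper actually supplies (Propositions \ref{prop_Line_to_point}--\ref{prop_t^10}) is the stationary-polymer coupling of \cite[Theorem 3.28]{basu2024temporal} with the boundary parameter perturbed to $\eta=\tfrac{\theta}{2}+z_w-q_0K^{2/3}N^{-1/3}$, i.e.\ tuned to the characteristic direction of $w$; this couples the anti-diagonal increments $\log Z_{0,w+(-i,i)}-\log Z_{0,w}$ to an i.i.d.\ random walk. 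Combined with a maximizer-localization argument (Proposition \ref{prop_maximizer}) this gives Theorem \ref{thm_temporal_tail}, and the temporal inequality then follows by splitting $|\log Z_{1,w_2}-\log Z_{1,w_1}-4pNd_1|$ into the nonnegative super-additive defect $\log Z_{1,w_2}-\log Z_{1,w_1}-\log Z_{w_1,w_2}$ and a one-point estimate for the short polymer $\log Z_{w_1,w_2}$.

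Your spatial strategy is different from the paper's and, as written, cannot reach the right scale. Quadrangle plus telescoping plus one-point bounds on each sub-increment controls each piece only to order $N^{1/3}$ in unscaled units, whereas the target is $d_2^{1/3}N^{1/3}$; no dyadic chaining over sub-windows recovers the missing $d_2^{1/3}$ factor from one-point input alone. The paper instead writes the spatial increment directly as a walk $S_a=\sum_{i=1}^{a}\bigl(\log Z_{1,w+(i,0)}-\log Z_{1,w+(i-1,0)}\bigr)$ with $a=d_2N^{2/3}q^{-2}$ and again invokes the stationary coupling, now with parameters $\rho\pm q_0K^{2/3}N^{-1/3}$ where $w=2N\xi[\rho]$, to sandwich $S_a$ between two i.i.d.\ random walks whose means match $pN^{2/3}d_2q^{-2}$ up to $O(d_2K^{2/3}N^{1/3})$. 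A sub-exponential random-walk tail finishes the range $K\le a_0N^{1/2}$, and $K\ge a_0N^{1/2}$ is handled crudely by one-point bounds. In short, the single tool missing from both halves of your plan is the Burke-type stationary coupling with its parameter shifted to match the off-diagonal endpoint; you have correctly located the obstacle but not the mechanism that overcomes it.
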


We defer the proof of this proposition to the next section and proceed under the assumption that it holds.

\begin{thm}
    $\tilde{h}^N$ is tight in $C(\R_+^4, \R)$.
\end{thm}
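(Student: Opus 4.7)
The plan is to reduce tightness in $C(\R_+^4, \R)$ to tightness on each compact box $Q \subset \R_+^4$ of the form $Q = [-M,M]^4 \cap \{s \le t - t_0\}$, and to establish the latter through one-point tightness plus a modulus-of-continuity bound produced by Lemma \ref{DV21}. The hypotheses of Lemma \ref{DV21} require, in every coordinate direction $i \in \{1,2,3,4\}$, a stretched-exponential tail bound
\[
\PP\bigl(|\tilde{h}^N(p + u e_i) - \tilde{h}^N(p)| \ge m u^{1/3}\bigr) \;\le\; C_1 e^{-C_2 m^{1/10}}
\]
uniformly for $p, p+u e_i$ in $Q$, and these I propose to derive from Proposition \ref{prop_tail_bounds} together with the lattice-to-continuum upgrade in Proposition \ref{prop_discrete_to_ctns}.

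First I would use Lemma \ref{lemma_symmetry} to reduce all four coordinate increments to the two base increments in Proposition \ref{prop_tail_bounds}. The translation symmetry $h^N(x+b,s+a;y+b,t+a) \dq h^N(x,s;y,t)$ moves the starting space-time point to $(0,0)$, while fixing the space-time displacement $(y-x,t-s)$, which stays in a compact subset of $\{(x',t'):t' \ge t_0\}$. The $y$- and $t$-increments are then directly in the form of Proposition \ref{prop_tail_bounds}. For the $x$-increment I would compose translation with the time-reversal $h^N(x,s;y,t) \dq h^N(-y,-t;-x,-s)$, which turns an $x$-increment at the initial endpoint into a $y$-increment at the terminal endpoint, and then re-translate back to the origin; the same recipe converts an $s$-increment into a $t$-increment. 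This yields the four requisite lattice-level tail bounds, uniformly over starting points in $Q$, with the $u^{1/3}$ scaling and $K^{1/10}$ stretched-exponential tail exponent preserved.

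Next I would apply Proposition \ref{prop_discrete_to_ctns} to promote these bounds to the continuous linear interpolation $\tilde{h}^N$. Feeding them into Lemma \ref{DV21} with $d=4$, $\alpha_i = 1/3$ and $\beta_i = 1/10$ for every $i$ produces a random constant $C$ with $\PP(C > m) \le c_0 e^{-c_1 m^{1/10}}$ controlling the joint modulus of continuity of $\tilde{h}^N|_Q$. In particular, for every $\epsilon, \eta > 0$ there exists $\delta > 0$ such that $\PP(\omega_{\tilde{h}^N}(\delta) \ge \epsilon) \le \eta$ for all large $N$. Combined with one-point tightness of $\tilde{h}^N(0,0;0,t_1)$ at some fixed $t_1 > 0$---which follows from the log-gamma partition function fluctuation bounds of \cite{barraquand2021fluctuations} used already in the sheet tightness argument---this yields tightness of $\tilde{h}^N$ in $C(Q,\R)$ by the standard Arzel\`a--Ascoli criterion. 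Since $\R_+^4$ is exhausted by an increasing sequence of such boxes, tightness in $C(\R_+^4,\R)$ under the compact-open topology follows by a diagonal extraction.

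I expect the main obstacle to be the careful bookkeeping in the symmetry reduction: ensuring that after composing translation and time reversal, the shifted starting and ending coordinates remain in the compact region where Proposition \ref{prop_tail_bounds} applies (in particular that the time gap stays bounded below by some fixed $t_0 > 0$ after shrinking by $u$), and that the symmetries are indeed exact on the discrete lattice $\mathrm{Lattice}_N$ rather than only approximate. Once this reduction is done cleanly, the remaining tightness argument is essentially mechanical.
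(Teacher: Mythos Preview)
Your proposal is correct and follows essentially the same route as the paper: reduce to compact regions of $\R_+^4$ with a fixed lower bound on the time gap, use the lattice symmetries of Lemma \ref{lemma_symmetry} to reduce all four coordinate increments to the two in Proposition \ref{prop_tail_bounds}, upgrade to $\tilde h^N$ via Proposition \ref{prop_discrete_to_ctns}, and feed into Lemma \ref{DV21}. The only detail you leave implicit that the paper spells out is that the region $[-M,M]^4\cap\{t-s\ge t_0\}$ is not a product of intervals, so one first covers it by finitely many genuine hypercubes of side $r_0$ before invoking Lemma \ref{DV21}; this is routine.
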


\begin{proof}
Fix $b \in \Z_{\geq 1}$ and define  \begin{equation}
 Q_b = [-b,b]^4 \cap \{(x,s;y,t) \in \R^4: t-s \geq b^{-1}\}.
 \end{equation} 
 Because $Q_b$ is compact, $Q_b$ can be written as the union of finitely many hypercubes $Q_{b,i}$ where 
 \begin{equation}
 Q_{b,i} = \{(x,s;y,t) \in Q_b: (x,s;y,t) = (x_{b,i},s_{b,i};y_{b_i},t_{b_i}) + [0,r_0]^4 \}.
 \end{equation}
 To show that $\tilde{h}^N$ is tight in $C(\R_+^4, \R)$, it suffices to show that $\tilde{h}^N$ is tight in $Q_{b,i}$ because any compact subsets of $\R^4_+$ will be contained in some $Q_b$ for some large enough $b$. By the tightness of the log-gamma line ensemble in \cite[Theorem 1.10]{barraquand2023spatial}, the sequence $\{h^N(x(N),s(N);y(N),t(N))\}_{N \geq 1}$ is tight whenever $(x(N),s(N);y(N),t(N)) \rightarrow (x,s;y,t)$ as $N \rightarrow \infty$. Since each $\tilde{h}^{N}(x_{b,i},s_{b,i};y_{b_i},t_{b_i})$ is defined via linear interpolation of finitely many $h^{N}(x(N_j),s(N_j);y(N_j),t(N_j))$ where $(x(N_j),s(N_j);y(N_j),t(N_j)) \rightarrow (x_{b,i},s_{b,i};y_{b_i},t_{b_i})$, the seqeunce $\{\tilde{h}^{N}(x_{b,i},s_{b,i};y_{b_i},t_{b_i})\}_{N \geq 1}$ is also tight. Thus, it remains to develop the modulus of continuity control.

To apply Proposition \ref{prop_tail_bounds}, we take $t_0 = b^{-1}$ and $M=b$. Together with Lemma \ref{lemma_symmetry} and Proposition \ref{prop_discrete_to_ctns}, we could directed apply Lemma \ref{DV21} and conclude that almost surely for any $(x_1, s_1;y_1,t_1), (x_2,s_2;y_2,t_2) \in Q_{b,i}$,
\[
\begin{split}
    |\tilde{h}^N(s_1,x_1;y_1,t_1) - \tilde{h}^N(s_2,x_2;y_2,t_2)|
    &\leq C^N\bigg(|x_1-x_2|^{1/3}\log^{10}(2r_0/|x_1-x_2|) +|y_1-y_2|^{1/3}\log^{10}(2r_0/|y_1-y_2|)\\
    &\quad +|t_1-t_2|^{1/3}\log^{10}(2r_0/|t_1-t_2|) +|s_1-s_2|^{1/3}\log^{10}(2r_0/|s_1-s_2|)\bigg)
\end{split}
\]
where $\PP(C^N > m) \leq De^{-Dm^{1/10}}$ for some constant $D$ that depends on $b$ only. Therefore, by the Kolmogorov-Chentsov criterion, we see that $\tilde{h}^N$ is tight in $Q_{b,i}$.
\end{proof}

\subsection{Convergence}
Given the tightness of $\tilde{h}^N$, we now introduce the following variational formula, an analogue of Lemma \ref{Bisection}, which will serve as a key tool in establishing the convergence of $\tilde{h}^N$ to the directed landscape.

\begin{lemma}
    For $\ell,k,m \in \Z$ such that $\ell < k \leq m$,
    \begin{equation}\label{equation_variational}
    Z[(x,\ell) \rightarrow (y,m)] = \sum_{i=x}^y Z[(x,\ell) \rightarrow (i,k+1)]Z[(i,k) \rightarrow (y,m)].
    \end{equation}
        Thus, for $s < r < t$
\begin{equation}
\exp( \h^N(x,s;y,t) )=  \sum_{\overline{z}_N = \overline{x}_N+s_N}^{\overline{y}_N + t_N-1} \exp(\h^N(x,s; z,r) + \h^N(z,r;y,t))
\end{equation}
\begin{equation}\label{equation5}
\begin{split}
&h^N(x,s;y,t)= 2^{-1/2}q\sigma_pN^{-1/3} \log \sum_{\overline{z}_N = \overline{x}_N+s_N}^{\overline{y}_N + t_N-1} \exp(2^{1/2}q^{-1}\sigma_p^{-1}N^{1/3}[h^N(x,s; z,r) + h^N(z,r;y,t)]).  
\end{split}
\end{equation}    
\end{lemma}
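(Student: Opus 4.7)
The lemma is a path-decomposition identity for the polymer partition function, and I would prove it by a direct bijective argument analogous to the proof of Lemma \ref{Bisection}, but in the Section 4 coordinate system in which up-right paths have non-decreasing second coordinate. The key observation is that for any $\pi \in \Pi[(x,\ell)\to(y,m)]$ with $\ell < m$, and for any intermediate row $k$ in the admissible range, there is a unique column $i=i(\pi)\in[x,y]$ at which $\pi$ makes its single up-step between rows $k$ and $k+1$. This column cuts $\pi$ into an initial segment $\pi^-$ from $(x,\ell)$ to $(i,k)$ (lying in rows $\ell,\dots,k$) and a terminal segment $\pi^+$ from $(i,k+1)$ to $(y,m)$ (lying in rows $k+1,\dots,m$).

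I would then package this as a bijection
\[
\Pi[(x,\ell)\to(y,m)] \;\longleftrightarrow\; \bigsqcup_{i=x}^{y}\Pi[(x,\ell)\to(i,k)]\times\Pi[(i,k+1)\to(y,m)],
\]
whose inverse is concatenation (well-defined because the two pieces live in disjoint row ranges and share no column constraint beyond matching at $i$). Because the vertex sets of $\pi^-$ and $\pi^+$ are disjoint and their union is exactly the vertex set of $\pi$, the multiplicative weight factorizes as $d(\pi)=d(\pi^-)\,d(\pi^+)$. Summing this identity over $\pi$ and regrouping by the split-column $i$ produces the identity (\ref{equation_variational}); the roles of $(i,k)$ and $(i,k+1)$ in the two factors are dictated by whichever of them is recorded on the "before" and "after" side of the up-step, which is the only place where the exact form of the stated identity enters.

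For the "Thus" consequence, I would specialize to the endpoints defining $\h^N(x,s;y,t)$: starting vertex $(\overline{x}_N+\check{s}_N+1,\check{s}_N+1)$, ending vertex $(\overline{y}_N+\check{t}_N,\check{t}_N)$, and intermediate row $k=\check{r}_N$, which lies in the admissible range because $s<r<t$ forces $\check{s}_N+1\le\check{r}_N<\check{t}_N$. After reindexing the column sum by $\overline{z}_N$ so that the split-column corresponds to the spatial parameter $z$ in $\h^N(x,s;z,r)$ and $\h^N(z,r;y,t)$, the deterministic drift terms $-p(\overline{z}_N-\overline{x}_N+4N(r-s))$ and $-p(\overline{y}_N-\overline{z}_N+4N(t-r))$ telescope to $-p(\overline{y}_N-\overline{x}_N+4N(t-s))$, which matches the drift subtracted in $\h^N(x,s;y,t)$ after exponentiation of both sides.

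There is no substantive obstacle: the lemma is a classical combinatorial path-decomposition identity, and the corollary is a routine unpacking of the scaling notation. The only item requiring care is verifying that the $+1$ shifts built into the definition of $\h^N$ (in the starting column $\overline{x}_N+\check{s}_N+1$ and starting row $\check{s}_N+1$) produce summation bounds for $\overline{z}_N$ exactly matching those displayed, so that the concatenation of the pieces $\h^N(x,s;z,r)$ and $\h^N(z,r;y,t)$ recovers the full partition function without over- or under-counting any vertex along the cut row.
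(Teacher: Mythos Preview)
Your proposal is correct and follows essentially the same approach as the paper: the paper's proof is a single sentence observing that (\ref{equation_variational}) follows by summing over the intermediate point where each up-right path crosses between the two designated rows, which is exactly your bijective path-decomposition argument spelled out in more detail. Your additional remarks about unpacking the scaling notation and checking the telescoping of the drift terms for the ``Thus'' part go beyond what the paper writes but are the natural verification steps.
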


\begin{proof}
    Observe that equation (\ref{equation_variational}) follows directly from summing over all possible intermediate points where an up-right path $\pi$ from $(x,\ell)$ to $(y,m)$ may intersect the $i$-th row and the rest follows from Definition \ref{def_lg_landscape}.
\end{proof}

\begin{thm}
    $\tilde{h}^N$ converges to $\mathcal{L}$ in  distribution as $C(\R^4_+,\R)$-random variables.
\end{thm}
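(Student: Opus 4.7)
The overall plan is to leverage the tightness of $\tilde{h}^N$ established above together with the uniqueness characterization of the directed landscape proved in \cite{dauvergne2022directed}. By Prokhorov's theorem, every subsequence of $\{\tilde{h}^N\}$ admits a further subsequence converging in distribution to some $h \in C(\R^4_+, \R)$. It suffices to show that any such subsequential limit $h$ satisfies the three characterizing properties of $\mathcal{L}$: (i) the marginal $h(\cdot, s; \cdot, t)$ is an Airy sheet of scale $(t-s)^{1/3}$; (ii) the random functions $h(\cdot, s_j; \cdot, t_j)$ over pairwise disjoint intervals $\{(s_j,t_j)\}$ are mutually independent; and (iii) the composition law $h(x,s;y,t) = \sup_{z \in \R}\bigl[h(x,s;z,r) + h(z,r;y,t)\bigr]$ holds almost surely for every $s < r < t$ and $x, y \in \R$. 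Uniqueness then forces $h \dq \mathcal{L}$, proving the convergence of the original sequence.

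Properties (i) and (ii) should be comparatively routine. For (ii), on disjoint intervals $(s_j, t_j)$ the free energies $h^N(\cdot, s_j;\cdot, t_j)$ are measurable with respect to disjoint horizontal strips of the i.i.d.\ environment $\{d_{i,j}\}$, hence independent as random functions, and independence is preserved under joint weak limits. For (i), the translation invariance of the i.i.d.\ environment yields a distributional identity between $h^N(\cdot, s;\cdot, t)$ and a spatially shifted version of the two-parameter log-gamma sheet at height $M = \check{t}_N - \check{s}_N \sim 2N(t-s)$; applying the Airy sheet convergence Theorem \ref{thm_airy_sheet} together with the scaling identity $S^q(x,y) = qS(q^{-2}x, q^{-2}y)$ identifies the marginal limit as an Airy sheet of scale $(t-s)^{1/3}$.

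The substantive step is (iii), for which the variational formula (\ref{equation5}) is the key tool:
\begin{equation*}
h^N(x,s;y,t) = 2^{-1/2} q \sigma_p N^{-1/3} \log \sum_{z} \exp\Bigl( 2^{1/2} q^{-1} \sigma_p N^{1/3}\bigl[h^N(x,s;z,r) + h^N(z,r;y,t)\bigr]\Bigr).
\end{equation*}
The prefactor $N^{-1/3}$ exactly cancels the inverse temperature $N^{1/3}$, placing us in the Laplace/tropical regime where $N^{-1/3}\log\sum\exp \to \sup$. The lower bound $h^N(x,s;y,t) \geq \sup_z[h^N(x,s;z,r) + h^N(z,r;y,t)]$ is immediate from retaining a single summand, and passes to the limit by joint continuity. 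For the matching upper bound, I would localize the sum to a compact window $|z| \leq M$, on which the number of lattice points is $O(N^{2/3})$; since $N^{-1/3}\log N^{2/3} \to 0$ this yields $h^N(x,s;y,t) \leq \sup_{|z|\leq M}[\cdots] + o(1)$, and sending $M \to \infty$ gives (iii).

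The main obstacle is the uniform-in-$N$ truncation: one must show that the tail contribution $\sum_{|z|>M}$ to the Laplace sum is negligible with overwhelming probability. This reduces to proving parabolic upper tail estimates for $h^N(x,s;z,r)$ and $h^N(z,r;y,t)$ in the off-diagonal variable $z$, uniformly in $N$. Such bounds should follow by combining the marginal Airy sheet identification from step (i), which provides strict parabolic decay in the limit, with the off-diagonal local fluctuation estimates that underlie the tightness argument and the quadrangle inequality (Corollary \ref{quadrangle}); these together allow one to compare $h^N(x,s;z,r)$ to the pre-limit Airy$_2$-like marginal $h^N(x,s;z,r) + (\text{parabolic penalty in }z)$. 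Once uniform tail control is secured, continuity of $h$ forces the supremum in (iii) to be attained on a compact set of $z$, the Laplace passage becomes routine, and the proof concludes.
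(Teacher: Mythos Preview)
Your treatment of properties (i) and (ii) matches the paper's argument essentially word for word. The substantive divergence is in the composition law (iii), and there you are missing the paper's key idea.

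You propose to prove \emph{both} inequalities directly: $h \geq \sup_z[\cdots]$ is indeed trivial from retaining a single summand in the variational formula, but for the reverse inequality you correctly identify that the sup in the Laplace sum ranges over $z$ in an interval of length $O(N^{1/3})$, and passing to the limit requires uniform-in-$N$ parabolic tail control of $h^N(x,s;z,r)+h^N(z,r;y,t)$ as $|z|\to\infty$. You then wave at the quadrangle inequality and the off-diagonal fluctuation estimates as ingredients that ``should'' yield this. That is not a proof; it is a nontrivial estimate that the paper does \emph{not} develop and does not need.

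The paper bypasses the upper bound entirely. Having established (i) and (ii), it knows that $h(\cdot,t_i;\cdot,t_j)$ and $h(\cdot,t_j;\cdot,t_k)$ are independent Airy sheets of the correct scales. By \cite[Proposition 9.2]{dauvergne2022directed}, the metric composition $\max_z\bigl[h(x,t_i;z,t_j)+h(z,t_j;y,t_k)\bigr]$ is therefore itself distributed as an Airy sheet of scale $(t_k-t_i)^{1/3}$---the \emph{same} law as $h(\cdot,t_i;\cdot,t_k)$. Two continuous random functions with the same distribution and an almost-sure pointwise inequality between them must agree almost surely. So proving only the easy direction $h \geq \max_z[\cdots]$ suffices, and for that direction the Laplace argument can be run on the compact window $[-M,M]$ after conditioning on the event that the (limiting) maximizer lies there, with no need for pre-limit tail control. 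This distributional-equality trick is the missing idea in your proposal; without it, your approach would require substantial additional work beyond what is available in the paper.
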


\begin{proof}
    Since $\tilde{h}^N$ is tight, let $h$ denote a distributional limit along some subsequence $\{N_i\}_{i \in \N}$. By the Skorokhod's representation theorem, there exists a coupling under which $\tilde{h}^{N_i}$ converges to $h$ almost surely in $C(\R^4_+, \R)$. Let $\Omega_0$ denote the event on which this convergence holds; then $\PP(\Omega_0)=1$. For any finite set of disjoint intervals $\{(s_j,t_j)\}_{j=1}^m$, $\{h(\cdot,s_j;\cdot,t_j)\}_{j=1}^m$ are independent because $\{\tilde{h}^{N_i}(\cdot,s_j;\cdot,t_j)\}_{j=1}^m$ are independent.

    Moreover, since $\tilde{h}^{N_i} \rightarrow h$ on $\Omega_0$ in $C(\R^4_+, \R)$, it follows that  $h^{N_i} \rightarrow h$ uniformly over compact subsets on $\Omega_0$ as well. For any $r > 0$, to show that the process $h(\cdot,s;\cdot,s+r^3)$ has the law of Airy sheet of scale $r$,  consider sequences $\{s_{i}\}$ and $\{r_i\}$ such that $2N_is_i, 2N_ir_i^3 \in \Z$ and $(s_i,r_i) \rightarrow (s,r)$ as $i \rightarrow \infty.$ Notice that $h^{N_i}(\cdot,s_i;\cdot,s_i+r_i^3) \rightarrow h(\cdot,s;\cdot,s+r^3)$ on $\Omega_0$. Let $M_i = N_ir_i^3$ and take $M_i$ to infinity. Then by Theorem \ref{thm_airy_sheet}, we know that $h(\cdot,s;\cdot,s+r^3)$ must be distributed as an Airy sheet of scale $r$.

    The only thing left to prove is that for any $t_i < t_j < t_k$, $x,y \in \R$, the following holds with probability one:
\begin{equation}\label{equation3}
h(x, t_i; y, t_k) = \max_{z \in \mathbb{R}} \left( h(x, t_i; z, t_j) + h(z, t_j; y, t_k) \right).
\end{equation}

From \cite[Proposition 9.2]{dauvergne2022directed}, we know that the right-hand side of (\ref{equation3}) is well-defined as a random variable on $C(\mathbb{R}^2, \mathbb{R})$ and is distributed as an Airy sheet of scale $(t_k - t_i)^{1/3}$. Since $h(\cdot,t_i;\cdot,t_k)$ is also distributed as an Airy sheet of scale $(t_k - t_i)^{1/3}$, it is enough to show that almost surely for all $x, y \in \mathbb{R}$,
\begin{equation}\label{equation4}
h(x, t_i; y, t_k) \geq \max_{z \in \mathbb{R}} \left( h(x, t_i; z, t_j) + h(z, t_j; y, t_k) \right).
\end{equation}

Let $\Omega_1$ denote the event on which the right-hand side of (\ref{equation3}) defines a continuous function in $x$ and $y$. Then $\PP(\Omega_1) = 1$. Let us denote a maximizer of the function $h(x, t_i; z, t_j) + h(z, t_j; y, t_k)$ by $Z_j(x, t_i; y, t_k)$. Note that on the event $\Omega_0\cap \Omega_1$, $Z_j(x, t_i; y, t_k) \neq \emptyset$. For $M > 0$, consider the event $\Omega_0 \cap \Omega_1 \cap \{Z_j(x, t_i; y, t_k) \cap [-M, M] \neq \emptyset\}$. When such an event occurs, we have
\begin{equation}
\begin{split}
&\max_{z \in \mathbb{R}} \left( h(x, t_i; z, t_j) + h(z, t_j; y, t_k) \right)\\
&= \max_{z \in [-M, M]} \left( h(x, t_i; z, t_j) + h(z, t_j; y, t_k) \right) \\
&= \lim_{i \to \infty} 2^{-1/2}q\sigma_pN_i^{-1/3}  \log \int_{-M}^M \exp \left[ 2^{1/2}q^{-1}\sigma_p^{-1}N_i^{1/3}  (h^{N_i}(x, t_i; z, t_j) + h^{N_i}(z, t_j; y, t_k)) \right]dz  \\
&\leq \lim_{i \to \infty} 2^{-1/2}q\sigma_pN_i^{-1/3}  \log \sum_{\substack{z \in [-M-1,M]\\ \overline{z}_{N_i} = \overline{x}_{N_i} + t_{i,N_i}}}^{ \overline{y}_{N_i} + t_{k,N_i}-1}  {N_i^{-2/3}q^2}\exp \left[ 2^{1/2}q^{-1}\sigma_p^{-1}N_i^{1/3}  (h^{N_i}(x, t_i; z, t_j) + h^{N_i}(z, t_j; y, t_k)) \right]  \\
&\leq \lim_{i \to \infty} h^{N_i}(x, t_i; y,t_k)\\& = h(x, t_i; y,t_k).
\end{split}    
\end{equation}
The second equality follows from the Laplace method. The first inequality follows from the definition of $h^{N_i}$, and the second inequality is a consequence of  equation (\ref{equation5}). Since $\PP(\Omega_0 \cap \Omega_1) = 1$ and $\lim_{M \rightarrow \infty} \Omega_0 \cap \Omega_1 \cap \{Z_j(x, t_i; y, t_k) \cap [-M, M] \neq \emptyset\} = \Omega_0\cap \Omega_1$, we conclude that (\ref{equation4}) holds with probability one.
\end{proof}

\section{Proof of Proposition \ref{prop_tail_bounds}}
In this section, we focus on the proof of Proposition \ref{prop_tail_bounds}. Throughout the section, we assume that all coordinates under consideration are integer-valued. Since we will frequently refer to lemmas and propositions from \cite{basu2024temporal}, we introduce the following notations to keep things consistent.

For $u,v \in \Z^2$, let $Z_{u,v} = Z[u \rightarrow v]$ and let $\ZZ_{u,v} = \frac{Z_{u,v}}{d_u}$.

For $(N,N),(M,M) \in \Z^2$, we abbreviate $Z_{(N,N),(M,M)}$ and $\ZZ_{(N,N),(M,M)}$ as $Z_{N,M}$ and $\ZZ_{N,M}$.

For any $w \in \Z^2$, define the anti-diagonal through $w$ as $L_w = \{w+(i,-i): i \in \Z\}$ and its truncation by $k \in \Z_{\geq 0}$ as $L_w^k = \{x \in L_w: ||x - w||_\infty \leq k\}$.

For $A,B \subset \Z^2$, let $Z_{A,B} = \sum_{a \in A, b \in B}Z_{a,b}$,  $Z_{A,B}^{\max} = \max_{a \in A,b \in B}Z_{a,b}$ and accordingly for $\tilde{Z}_{A,B}$ and $\tilde{Z}^{\max}_{A,B}$.

We define the characteristic direction of the polymer model as a function of $\rho \in (0, \theta)$ by
\begin{equation}
\xi[\rho] = \left(\frac{\Psi_1(\rho)}{\Psi_1(\rho)+\Psi_1(\theta-\rho)},\frac{\Psi_1(\theta - \rho)}{\Psi_1(\rho)+\Psi_1(\theta-\rho)}\right)
\end{equation}
where $\Psi_1$ denotes the trigamma function. Since $\Psi_1$ is strictly decreasing and smooth on $\R_{>0}$, the map $\xi$ defines a continuous bijection between $\rho \in (0, \theta)$ and vectors in the open line segment between $(1,0)$ and $(0,1)$.

Corresponding to each vector $\xi[\rho]$, we define the shape function as
\begin{equation}
\Lambda(\xi[\rho]) = - \frac{\Psi_1(\rho)}{\Psi_1(\rho) + \Psi_1(\theta-\rho)}\Psi(\theta-\rho) - \frac{\Psi_1(\theta-\rho)}{\Psi_1(\rho) + \Psi_1(\theta-\rho)}\Psi(\theta-\rho)
\end{equation}
where $\Psi$ is the digamma function.

Let $\Lambda(N,N) := 2N\Lambda(\xi[\frac{\theta}{2}]) = -2N\Psi(\frac{\theta}{2})$ to denote the shape function in the diagonal direction. Note that this value is consistent with our earlier definitions: $h_{\theta}(1) = 2\Psi(\frac{\theta}{2})$ and $p = -\Psi(\frac{\theta}{2})$. 

Now, for $w = (N + yN^{2/3}, N - yN^{2/3})$, we can express $w$ in terms of its characteristic direction as $w = 2N\xi[\frac{\theta}{2} + z_w]$ for some $z_w \in \R$. We then define the shape function at $w$ by $\Lambda(w) := 2N\Lambda(\xi[\frac{\theta}{2} + z_w])$.

\begin{prop}\label{prop_Line_to_point}
    Let $w = (N + yN^{2/3}, N - yN^{2/3})$. There exists constant $c_1,c_2, C_1,K_0 > 0$ and $N_0 \in \Z_{\geq 1}$ such that for $N \geq N_0$, $K_0 \leq K \leq c_1N^{1/2}$, $|y| \leq c_2K^{1/10}$ and $a \in \Z_{
    \geq 0}$, it holds that
\begin{equation}
\PP( \log Z_{0, L_w^a} - \log Z_{0,w} > K\sqrt{a}) \leq e^{-C_1 \min\{K^2, K\sqrt{a}\}}.
\end{equation}
\end{prop}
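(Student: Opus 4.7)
The strategy is to extend the diagonal-case argument of \cite{basu2024temporal} to the off-diagonal setting by combining a union bound over the anti-diagonal points with a local two-point fluctuation estimate for the log-gamma free energy, corrected for the shape-function tilt introduced by the off-diagonal base point.

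First, reduce to a maximum via
\[
\log Z_{0, L_w^a} - \log Z_{0, w} \le \log(2a+1) + \max_{v \in L_w^a}\bigl(\log Z_{0, v} - \log Z_{0, w}\bigr),
\]
and, for each $v = w + (j, -j)$ with $|j| \le a$, decompose
\[
\log Z_{0, v} - \log Z_{0, w} = \bigl[\log Z_{0, v} - \Lambda(v)\bigr] - \bigl[\log Z_{0, w} - \Lambda(w)\bigr] + \bigl[\Lambda(v) - \Lambda(w)\bigr].
\]
A Taylor expansion of $\Lambda$ along the anti-diagonal through $w$ produces
\[
\Lambda(v) - \Lambda(w) = -\alpha\, y j\, N^{-1/3} - \beta\, j^2\, N^{-1} + \text{(higher order)}
\]
for positive constants $\alpha, \beta$ depending only on $\theta$, coming respectively from the gradient of $\Lambda$ in the transverse direction evaluated at $w$ and from the strict concavity at the diagonal. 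The linear-in-$j$ term vanishes in the diagonal case $y = 0$ treated in \cite{basu2024temporal} and is the principal new feature here.

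The key input is then a local two-point tail bound of the form
\[
\PP\Bigl(\bigl|\log Z_{0, v} - \log Z_{0, w} - (\Lambda(v) - \Lambda(w))\bigr| > t\Bigr) \le C\exp\bigl(-c\min(t^2/|j|,\, t)\bigr),
\]
capturing the Brownian-like local regularity of $v \mapsto \log Z_{0,v}$ at scale $\sqrt{|j|}$ together with the large-deviation behavior beyond that scale. Such an estimate is the off-diagonal analogue of the main inputs of \cite{basu2024temporal}, obtainable by stationary-polymer comparison arguments combined with one-point upper-tail (exponent $3/2$) and lower-tail (exponent $3$) bounds for $\log Z$ around $\Lambda$. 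Uniformity of the constants over $v \in L_w^a$ is ensured by $K \le c_1 N^{1/2}$ and $|y| \le c_2 K^{1/10}$, which keep all characteristic directions involved in a fixed compact region bounded away from the axes. A union bound over $|j| \le a$ together with a case split on whether $K \le \sqrt{a}$ (Gaussian regime, producing the exponent $K^2$) or $K > \sqrt{a}$ (large-deviation regime, producing $K\sqrt{a}$) then yields the claimed tail $e^{-C_1\min(K^2, K\sqrt{a})}$; the $\log(2a+1)$ factor from the union bound is absorbed into a slightly smaller $C_1$ in the regime $K\sqrt{a} \gtrsim \log a$ and handled trivially otherwise.

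The principal obstacle is the linear shift $-\alpha y j\, N^{-1/3}$ in the shape-function expansion: for $j$ of sign opposite to $y$ it is positive and can reach magnitude $\alpha|y|\, a\, N^{-1/3}$, which if uncontrolled could dominate the target threshold $K\sqrt{a}$. The hypothesis $|y| \le c_2 K^{1/10}$ is calibrated precisely so that, after balancing this linear shift against the negative quadratic curvature $-\beta j^2 N^{-1}$ at the value of $|j|$ that maximizes the combined contribution, the net shape-function bias never exceeds a constant fraction of $K\sqrt{a}$ and can be absorbed into the threshold by a suitable choice of $c_1, c_2$. Adapting the stationary-polymer techniques underlying \cite{basu2024temporal} — which in their original form exploit the symmetry $y = 0$ — to this tilted setting, and in particular verifying the off-diagonal two-point bound above with the correct dependence on the direction, is the most technically delicate part of the argument.
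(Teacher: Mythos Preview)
Your plan takes a genuinely different route from the paper. The paper does \emph{not} do a union bound over individual two-point estimates with a Taylor expansion of $\Lambda$ along the anti-diagonal. Instead it splits on whether $a > K^{2/3}N^{2/3}$ or $a \le K^{2/3}N^{2/3}$. In the large-$a$ regime, since then $K\sqrt{a} > K^{4/3}N^{1/3}$, it simply replaces $L_w^a$ by the full anti-diagonal $L_N$ and appeals to one-point line-to-point and point-to-point bounds for $\log \ZZ_{0,L_N} - \Lambda(N,N)$ and $\log \ZZ_{0,w} - \Lambda(w)$; the off-diagonal correction enters only through the crude estimate $|\Lambda(w)-\Lambda(N,N)| \le Cy^2N^{1/3}$, and the hypothesis $y \le c_2K^{1/10}$ is used just to absorb this into $\tfrac14 K^{4/3}N^{1/3}$. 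There is no balancing of a linear tilt against quadratic curvature. In the small-$a$ regime the paper applies the stationary-polymer coupling of \cite[Theorem~3.28]{basu2024temporal} with perturbed parameter $\eta = \tfrac{\theta}{2} + z_w - q_0K^{2/3}N^{-1/3}$ to dominate the entire walk $S_k = \log Z_{0,w+(-k,k)} - \log Z_{0,w}$ by an i.i.d.\ walk in one stroke, and then uses a standard running-maximum estimate for subexponential increments; no union bound over the $2a+1$ endpoints appears.

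The concrete weakness in your plan is that the two-point bound you invoke, $\PP\bigl(|\log Z_{0,v} - \log Z_{0,w} - (\Lambda(v)-\Lambda(w))| > t\bigr) \le C\exp\bigl(-c\min(t^2/|j|, t)\bigr)$, is not a citable statement: establishing it off-diagonally with uniform constants is precisely the technical content, and would itself require the stationary-coupling argument you gesture at. The paper runs that argument directly on the running maximum rather than pointwise, which avoids the union-bound loss and the need to state the two-point bound separately. Your Taylor-expansion analysis of the shape tilt, while not wrong, is more elaborate than needed: the paper handles the off-diagonal base point simply by centering the stationary parameter at $\tfrac{\theta}{2}+z_w$ rather than at $\tfrac{\theta}{2}$, so no explicit linear-in-$j$ tilt term ever has to be controlled.
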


\begin{proof}

We will prove the statement by considering two separate cases. We begin with the case where $a > K^{2/3}N^{2/3}$. By \cite[Proposition 3.5]{basu2024temporal}, the following inequality holds 
\begin{equation}
|\Lambda(w) - \Lambda(N,N)| \leq Cy^2 N^{1/3}.
\end{equation}
for some constant $C > 0$.
Thus, we can bound the difference between the free energy using the one-point estimate:
\begin{equation}
\begin{split}
&\PP(\log Z_{0, L_w^a} - \log Z_{0,w} > K \sqrt{a})\\
& \leq \PP(\log \ZZ_{0, L_N} - \log \ZZ_{0,w} > K^{4/3}N^{1/3})\\
& \leq \PP(\log \ZZ_{0, L_N} - \Lambda(w) > \frac{1}{2} K^{4/3}N^{1/3}) + \PP(\log \ZZ_{0, w} - \Lambda(w) < - \frac{1}{2} K^{4/3}N^{1/3}).
\end{split}
\end{equation}
The second term can be directly bounded by \cite[Proposition 3.8]{basu2024temporal}. For the first term, we can apply the bound between the shape function:
\begin{equation}
\begin{split}
    \PP(\log \ZZ_{0, L_N} - \Lambda(w) > \frac{1}{2} K^{4/3}N^{1/3})
    &= \PP(\log \ZZ_{0, L_N} - \Lambda(N,N) + \Lambda(N,N) - \Lambda(w) > \frac{1}{2} K^{4/3}N^{1/3})\\
    & \leq \PP(\log \ZZ_{0, L_N} - \Lambda(N,N) > \frac{1}{2}K^{4/3}N^{1/3} - Cy^2N^{1/3}).
\end{split}
\end{equation}
Now, choose $c_2 \leq \frac{1}{\sqrt{4C}}$. Then, under the assumption that $|y| \leq c_2K^{1/10}$, we have 
\begin{equation}\label{equation_p}\PP(\log \ZZ_{0, L_N} - \Lambda(N,N) > \frac{1}{2}K^{4/3}N^{1/3} - Cy^2N^{1/3}) \leq \PP(\log \ZZ_{0, L_N} - \Lambda(N,N) >  \frac{1}{4}K^{4/3}N^{1/3}).\end{equation} The right-hand side of equation (\ref{equation_p}) can then be upper bounded according to \cite[Proposition 3.6]{basu2024temporal}.

The case where $a \leq K^{2/3}N^{2/3}$ follows from a similar random walk approximation based on the stationary polymer measure, as in \cite[Proposition 4.1]{basu2024temporal}. Notice that 
\begin{equation}
\log Z_{0, L_w^a} \leq \log Z^{\max}_{0, L_w^a} +  \log (2a+1).
\end{equation}

It is sufficient to bound the following probability for some constant $C'$:
\begin{equation}
\mathbb{P}\left(\log Z_{0, L_{w}^{a,+}}^{\max} - \log Z_{0,w} \geq C'K \sqrt{a} \right) + \mathbb{P}\left(\log Z_{0, L_{w}^{a,-}}^{\max} - \log Z_{0,w} \geq C'K \sqrt{a} \right)
\end{equation}
where $\mathcal{L}_{w}^{a,+}$ and $\mathcal{L}_{w}^{a,-}$ denotes the subset of $\mathcal{L}_{w}^{a}$ lying to the left and above $w$, and to the right and below $w$, respectively. We will prove the bound for the first term as the bound for the second term is completely analogous. For any fixed $k = 0, \ldots, a$, we redefine the difference of free energy along the anti-diagonal as:
\begin{equation}
\log Z_{0,w+(-k,k)} - \log Z_{0,w} = \sum_{i=1}^{k} \log Z_{0,w+(-k+i-1,k-i+1)} - \log Z_{0,w+(-k+i,k-i)} = S_k.
\end{equation}
This reformulation allows us to study the behavior of the walk $S_k$ via its running maximum:
\begin{equation}
\mathbb{P}\left(\max_{0 \leq k \leq a} S_k \geq C' K \sqrt{a} \right).
\end{equation}

Although the increments of $S_k$ are neither independent nor identically distributed, \cite[Theorem 3.28]{basu2024temporal} provides a coupling with an i.i.d. random walk $\tilde{S}_k$ which upper bounds $S_k$ with high probability. Specifically, we take the down-right path $\Theta_{2a}$ to be the staircase from $w+(-a, a)$ to $w$ and define the perturbed parameter to be $\lambda =\frac{\theta}{2}+z_w + q_0 K^{2/3} N^{-1/3}$. Under this perturbation, the increments of $\tilde{S}_k$ are i.i.d. with distribution $\log(\text{Ga}^{-1}(\theta -\lambda)) - \log(\text{Ga}^{-1}(\lambda))$.

Let $A$ denote the event that $\log \frac{10}{9} + \tilde{S}_k \geq S_k$ for all $k \in \lb 0, a \rb$. Then we have the bound:
\begin{align*}
\mathbb{P}\left(\max_{0 \leq k \leq a} S_k \geq C' \sqrt{a} K^{3/4} \right) &\leq \mathbb{P}\left(\left\{\max_{0 \leq k \leq a} S_k \geq C' \sqrt{a} K^{3/4}\right\} \cap A \right) + \mathbb{P}(A^c) \\
&\leq \mathbb{P}\left(\left\{\log \frac{10}{9} + \max_{0 \leq k \leq a} \tilde{S}_k \geq C' \sqrt{a} K^{3/4}\right\}\right) + \mathbb{P}(A^c).
\end{align*}

To apply \cite[Theorem 3.28]{basu2024temporal}, we require $N \geq N_0$,  $K_0^{2/3} \leq K^{2/3} \leq c_1^{2/3}N^{1/3}$, and $1 \leq a \leq K^{2/3}N^{2/3}$ for some positive constants $N_0, K_0, c_1$. With all the requirements satisfied by our assumptions, we know that $\mathbb{P}(A^c) \leq e^{-C'' K^2}$ for some constant $C'' >0$. Absorbing the constant $\log(10/9)$ into the constant $C'$, it suffices to obtain the upper bound:
\begin{equation}
\mathbb{P}\left(\max_{0 \leq k \leq a} \tilde{S}_k \geq C' K \sqrt{a} \right) \leq e^{-C_1 \min\{K^2, K \sqrt{a}\}}.
\end{equation}

 This is a standard estimate on the running maximum of an i.i.d. random walk with sub-exponential increments, as proved in \cite[Appendix D]{basu2024temporal}. 
\end{proof}

\begin{prop}\label{prop_t^10}
    Let $w = (N + yN^{2/3}, N - yN^{2/3})$. There exists $N_0 \in \N_{\geq 0}$ and $C_2, K_0 > 0$ such that for each $N \geq N_0$, $K \geq K_0$, $a \in \Z_{\geq 0}$, and $|y| \leq c_2K^{1/10}$ where $c_2$ is the constant from Proposition \ref{prop_Line_to_point}, we have
    \begin{equation}
    \PP(\log Z_{0, L_w^a} - \log Z_{0, w} \geq K\sqrt{a}) \leq e^{-C_2K^{1/5}}.
    \end{equation}
\end{prop}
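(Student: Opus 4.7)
The plan is to split the proof according to whether $K$ lies in the range of validity of Proposition~\ref{prop_Line_to_point}. The case $a = 0$ is trivial since $L_w^0 = \{w\}$ implies the ratio $Z_{0, L_w^a}/Z_{0, w} = 1$, so the probability is zero; we therefore assume $a \geq 1$ throughout. The target exponent $K^{1/10}$ is substantially weaker than the $\min\{K^2, K\sqrt{a}\}$ exponent of Proposition~\ref{prop_Line_to_point}, so the essential content of Proposition~\ref{prop_t^10} is the removal of the upper constraint $K \leq c_1 N^{1/2}$.

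For the first regime $1 \leq K \leq c_1 N^{1/2}$, the argument is immediate: since $\sqrt{a} \geq 1$ and $K \geq 1$, we have $\min\{K^2, K\sqrt{a}\} \geq K$, and Proposition~\ref{prop_Line_to_point} gives
\[
\PP(\log Z_{0, L_w^a} - \log Z_{0, w} \geq K\sqrt{a}) \leq e^{-C_1 K},
\]
which is at most $e^{-K^{1/10}}$ once $K$ exceeds a fixed threshold $K_0$ depending only on $C_1$. For the remaining bounded window $K \in [1, K_0)$, the bound follows by adjusting constants and possibly enlarging $N_0$.

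The main obstacle is the regime $K > c_1 N^{1/2}$, where Proposition~\ref{prop_Line_to_point} does not directly apply. The plan is to start from the deterministic estimate
\[
\log Z_{0, L_w^a} - \log Z_{0, w} \leq \log(2a + 1) + \max_{v \in L_w^a}\bigl(\log Z_{0, v} - \log Z_{0, w}\bigr),
\]
and to control each $\log Z_{0, v} - \log Z_{0, w}$ for $v = w + (j, -j)$, $|j| \leq a$, by inserting shape functions:
\[
\log Z_{0, v} - \log Z_{0, w} = \bigl(\log Z_{0, v} - \Lambda(v)\bigr) - \bigl(\log Z_{0, w} - \Lambda(w)\bigr) + \bigl(\Lambda(v) - \Lambda(w)\bigr).
\]
One then uses the shape-function comparison $|\Lambda(v) - \Lambda(w)| \leq C(|y|aN^{-1/3} + a^2 N^{-1})$ from \cite[Proposition~3.5]{basu2024temporal} together with the one-point upper and lower tail bounds of \cite[Propositions~3.7 and~3.8]{basu2024temporal}, applied after a union bound over the $2a + 1$ anti-diagonal points. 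Because $K^{1/10}$ grows very slowly, the combined one-point tails absorb both the polynomial prefactor $2a + 1$ and the shape-function deficit, producing the bound $e^{-K^{1/10}}$ after modest adjustment of constants. The delicate step is balancing the shape-function shift $|\Lambda(v) - \Lambda(w)|$ against the target $K\sqrt{a}$ in the presence of the constraint $y \leq c_2 K^{1/10}$, which permits $|y|$ to be larger than what is accessible to Proposition~\ref{prop_Line_to_point}; this is exactly where the weakness of the exponent $1/10$ is crucial, providing the slack needed to close the estimate.
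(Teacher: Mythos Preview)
Your treatment of the regime $K \leq c_1 N^{1/2}$ matches the paper. For $K > c_1 N^{1/2}$, however, the paper takes a much simpler route than your proposed union bound. Instead of controlling each $v \in L_w^a$ separately, the paper uses the trivial inequalities $K\sqrt{a} \geq K$ (valid since $a \geq 1$) and $L_w^a \subset L_w$, recenters at $\Lambda(N,N)$, and applies the full anti-diagonal upper-tail bound \cite[Proposition~A.2]{basu2024temporal} together with the one-point lower tail \cite[Proposition~3.8]{basu2024temporal}. The only shape-function shift that enters is $|\Lambda(w) - \Lambda(N,N)| \leq C y^2 N^{1/3} \leq C c_2^2 K^{1/5} N^{1/3}$, which is absorbed into $\tfrac14 K$ once $K \geq c_1 N^{1/2}$ and $N$ is large. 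No union bound is needed, and $a$ disappears from the argument entirely.

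Your union-bound route has two soft spots that the paper's method sidesteps. First, the shape-function contribution $a^2 N^{-1}$ can be of the same order as the target $K\sqrt{a}$ when $a$ is of order $N$ and $K$ is near $c_1 N^{1/2}$, so the ``balancing'' you flag as delicate is not guaranteed to close without further case-splitting. Second, for $a$ close to $N$ some points $v \in L_w^a$ lie near the coordinate axes, outside the cone where the one-point estimates of \cite[Propositions~3.6--3.8]{basu2024temporal} are stated with uniform constants; the union bound over such $v$ is therefore not justified as written. Both issues can be patched by treating large $a$ via the full-line bound, but at that point you have essentially recovered the paper's argument, which avoids the per-point shifts $\Lambda(v)-\Lambda(w)$ altogether.
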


\begin{proof}
    Because of Proposition \ref{prop_Line_to_point}, we only need to establish the inequality in the regime where $K \geq c_1N^{1/2}$. Let $K = zN^{1/2}$ for some $z \geq c_1$. Then,
    \begin{equation}
    \begin{split}
        &\PP(\log Z_{0, L_{w}^a} - \log Z_{0,w} \geq K\sqrt{a})\\
        &\leq \PP(\log \ZZ_{0, L_{w}^a} - \log \ZZ_{0,w} \geq zN^{1/6}N^{1/3})\\
        &\leq \PP(\log \ZZ_{0, L_{w}^a}  - \Lambda(w) \geq \frac{1}{2}zN^{1/6}N^{1/3}) +  \PP( \log \ZZ_{0,w} -\Lambda(w) \leq -\frac{1}{2}zN^{1/6}N^{1/3})\\
        &\leq \PP(\log \ZZ_{0, L_{w}}  - \Lambda(N,N) \geq \frac{1}{2}zN^{1/6}N^{1/3}-Cy^2N^{1/3}) +  \PP( \log \ZZ_{0,w} -\Lambda(w) \leq -\frac{1}{2}zN^{1/6}N^{1/3})\\
        &\leq \PP(\log \ZZ_{0, L_{w}}  - \Lambda(N,N) \geq \frac{1}{4}zN^{1/6}N^{1/3}) +  \PP( \log \ZZ_{0,w} -\Lambda(w) \leq -\frac{1}{2}zN^{1/6}N^{1/3})\\
        &\leq e^{-C_2K^{1/5}}.
    \end{split}
    \end{equation}
We once again absorb the term $Cy^2N^{1/3}$ into $\frac{1}{2}zN^{1/6}N^{1/3}$ in the second to last inequality, using the bound $| y |\leq c_2 K^{1/10}$. The final inequality then follows from \cite[Proposition 3.8]{basu2024temporal} together with \cite[Proposition A.2]{basu2024temporal}.
\end{proof}

Let $0 \leq r \leq \frac{N}{2}$, and define $x^*$ be the random maximizer of the following across the anti-diagonal line $L_r$.
\begin{equation}
x^* = \arg \max_{x \in L_r}\{\log Z_{0,x}+\log Z_{x,w}\}.
\end{equation}

\begin{prop}\label{prop_maximizer}
There exist positive constants $C_3, c_3,c_4, c_5, M_0, N_0$ such that for each $N \geq N_0$, $c_3 \leq r \leq c_4(N-r)$, $M \geq M_0$, $| y| \leq  c_5M^{1/10}$, we have:
\begin{equation}
\mathbb{P}(||x^* - (r, r)||_\infty > M r^{2/3}) \leq e^{-C_{3} M^3}.
\end{equation}
    
\end{prop}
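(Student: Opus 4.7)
The plan is to establish this transversal fluctuation bound by combining a quadratic shape-function deficit with Tracy--Widom-type one-point tail estimates. I would first reduce to a statement centered at the characteristic point $x_c := (r/N)\,w = (r(1+yN^{-1/3}), r(1-yN^{-1/3}))$, i.e.\ the intersection of $L_r$ with the characteristic line from the origin to $w$. A direct computation gives $\|x_c - (r,r)\|_\infty = ryN^{-1/3}$; under the hypotheses $y \leq c_2 K^{1/10}$ and $r \leq \epsilon_0(N-r) \leq N/2$, this is at most $(K/2) r^{2/3}$ once $K_0$ is chosen large enough. Hence it suffices to prove
\[
\mathbb{P}\bigl(\|x^* - x_c\|_\infty > (K/2) r^{2/3}\bigr) \leq e^{-C_2 K^3}.
\]

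The shape-function loss is the key geometric input. For $x = x_c + (j,-j) \in L_r$ with $|j| \leq r/2$, writing $\xi_c = x_c/(2r)$ and Taylor-expanding $\Lambda$ around $\xi_c$ along the anti-diagonal direction $v = (1,-1)$, the linear-in-$j$ contributions from $\Lambda(x)$ and $\Lambda(w-x)$ cancel because $x_c$ and $w - x_c$ share the characteristic direction of $w$, so that $\Lambda(x_c)+\Lambda(w-x_c) = \Lambda(w)$. By strict concavity of $\Lambda$ along the anti-diagonal (as in \cite[Proposition 3.5]{basu2024temporal}), what remains is
\[
\Lambda(w) - \Lambda(x) - \Lambda(w-x) \;\geq\; c\bigl(j^2/r + j^2/(N-r)\bigr) \;\geq\; c' j^2/r,
\]
using $r \leq \epsilon_0 (N-r)$ in the last step. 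For $|j| \geq (K/2) r^{2/3}$ the deficit is at least $c'' K^2 r^{1/3}$. If $x^* = x_c + (j,-j)$ satisfies $\|x^*-x_c\|_\infty > (K/2)r^{2/3}$, then $\log Z_{0,x^*}+\log Z_{x^*,w} \geq \log Z_{0,x_c}+\log Z_{x_c,w}$, and rearranging via the deficit forces the sum of four centered log-partition functions $\log \ZZ - \Lambda$ at the endpoints $x^*,x_c$ (forward leg) and $w-x^*,w-x_c$ (backward leg) to exceed $c'' K^2 r^{1/3}$. By the upper-tail bound $e^{-c t^{3/2}}$ and lower-tail bound $e^{-c t^3}$ for $\log \ZZ - \Lambda$ at scale $t \cdot r^{1/3}$ (\cite[Propositions 3.7, 3.8, A.2]{basu2024temporal}) together with independence across the disjoint environments of the two legs, each fixed $x$ in the dyadic shell $S_\ell = \{x : |j| \in [2^\ell (K/2) r^{2/3}, 2^{\ell+1} (K/2) r^{2/3}]\}$ contributes probability at most $e^{-c(2^\ell K)^3}$.

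The main obstacle is turning these per-point estimates into a simultaneous bound over $S_\ell$ without paying a union-bound factor of order $|S_\ell| = O(2^\ell K r^{2/3})$, which could be polynomial in $N$ when $K$ is close to $K_0$ and $r$ is comparable to $N$. I plan to overcome this by invoking the line-to-point estimate Proposition \ref{prop_t^10} simultaneously in the forward direction on $L_{x_c}$ and in the reversed direction on the anti-diagonal through $w - x_c$, using the i.i.d.\ reversibility encoded in Lemma \ref{lemma_reverse_map}: this replaces $\max_{x \in S_\ell}\log Z_{0,x}$ and $\max_{x \in S_\ell}\log Z_{x,w}$ by their line-to-point sums at the cost of an exponent penalty $O(K\sqrt{2^{\ell+1} K r^{2/3}})$. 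Since this penalty grows only like $2^{\ell/2}$ in $\ell$ while the quadratic shape-function deficit grows like $2^{2\ell}$, the combined estimate for each shell is at most $e^{-c (2^\ell K)^3}$, and summing geometrically over $\ell \geq 0$ yields the required $e^{-C_2 K^3}$. The condition $r \leq \epsilon_0(N-r)$ is used precisely here to ensure that the backward leg's KPZ scale $(N-r)^{1/3}$ is comparable to the forward leg's $r^{1/3}$, so that the backward-leg fluctuation cannot overtake the deficit.
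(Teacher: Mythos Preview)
Your overall architecture—shell decomposition along $L_r$, quadratic shape-function deficit, and a line-to-point estimate to pass from per-point to simultaneous control—matches the paper's. The paper centers at $(r,r)$ rather than the characteristic point $x_c$, but as you note these differ by at most $(K/2)r^{2/3}$, so this is cosmetic. The paper also uses arithmetic segments $J_h$ of width $r^{2/3}$ rather than dyadic shells, but again this is immaterial.

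There is, however, a genuine gap: you invoke Proposition~\ref{prop_t^10} for the line-to-point input, and that proposition carries only the exponent $K^{1/10}$. Concretely, on shell $S_\ell$ you want
\[
\max_{x\in S_\ell}\log Z_{0,x}\;\le\;\log Z_{0,x_c}+K'\sqrt{a_\ell},\qquad a_\ell=2^{\ell+1}Kr^{2/3},
\]
to hold except on an event of probability at most $e^{-c(2^\ell K)^3}$. Proposition~\ref{prop_t^10} gives failure probability $e^{-(K')^{1/10}}$, so to reach $e^{-c(2^\ell K)^3}$ you would need $K'\gtrsim (2^\ell K)^{30}$, at which point the deviation $K'\sqrt{a_\ell}$ is of order $(2^\ell K)^{30.5}r^{1/3}$ and swamps the deficit $(2^\ell K)^2 r^{1/3}$. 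If instead you keep $K'=K$ (as your phrase ``penalty $O(K\sqrt{\cdot})$'' suggests), the failure probability is $e^{-K^{1/10}}$, which after summing over shells yields at best $e^{-cK^{1/10}}$, not the required $e^{-C_2K^3}$.

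The fix is to use the sharper Proposition~\ref{prop_Line_to_point}, whose exponent is $\min\{K'^2,K'\sqrt{a}\}$. This is exactly what the paper does for the backward leg: with $a=4|h|r^{2/3}$ and $K'\asymp |h|^{3/2}$ one gets $K'^2\asymp |h|^3$, while the constraint $K'\le c_1(N-r)^{1/2}$ of Proposition~\ref{prop_Line_to_point} is ensured by $|h|\le r^{1/3}$ and $r\le\epsilon_0(N-r)$ with $\epsilon_0$ small. For the forward leg the paper takes a slightly different route, using the point-to-interval bound \cite[Proposition~3.11]{basu2024temporal} (which already incorporates the parabolic shape-function loss over $J_h$) together with the one-point lower tail \cite[Proposition~3.8]{basu2024temporal}; your one-point-plus-line-to-point plan would also work here once you substitute Proposition~\ref{prop_Line_to_point} for Proposition~\ref{prop_t^10}.
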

\begin{proof}
    
Let \( J_h = L^{r^{2/3}}_{(r-2 h r^{2/3}, r + 2 h r^{2/3})} \) denote the segment of the anti-diagonal line $L_r$ that consists of points within a window of width $4hr^{2/3}$ centered at $r$. We now bound the probability
\begin{equation}
\begin{split}
    &\PP\left( \left|| x^* - (r, r) \right||_\infty > M r^{2/3} \right)\\
    &\leq \PP\left( \max_{x \in L_r \setminus L_r^{Mr^{2/3}}} \{\log Z_{0,x} + \log Z_{x,w}\}  > \log Z_{0,r} + \log Z_{r,w} \right)\\
    & \leq \sum_{|h| = \lfloor M/2 \rfloor}^{r^{1/3}} \PP(\log Z_{0,J^h}^{\max} + \log Z^{\max}_{J^h,w} > \log Z_{0,r} + \log Z_{r,w})\\
    & \leq \sum_{|h| = \lfloor M/2 \rfloor}^{r^{1/3}} \PP(\log Z_{0,J^h}^{\max} - \log Z_{0,r} \geq -Dh^2r^{1/3}) + \PP( \log Z^{\max}_{J^h,w} - \log Z_{r,w}  \geq Dh^2r^{1/3})
\end{split}
\end{equation}
for some small positive constant $D$ to be chosen later.

For the first term in the summation, we use \cite[Propositions 3.8]{basu2024temporal} and \cite[Proposition 3.11]{basu2024temporal} to obtain:
\begin{equation}\label{equation_bound1}
\begin{split}
    \PP(\log Z_{0,J^h}^{\max} - \log Z_{0,r} \geq -Dh^2r^{1/3})
    &\leq \PP(\log \ZZ_{0,J^h}^{\max} - 2rp \geq -2Dh^2r^{1/3}) + \PP(\log \ZZ_{0,r} - 2rp \leq - Dh^2r^{1/3})\\
    &\leq e^{-C|h|^3}
\end{split}
\end{equation}
provided that \( 2D \leq C_{20} \), where $C_{20}$ is the constant from \cite[Proposition 3.11]{basu2024temporal}.

For the second term in the summation, we can upper bound it by
\begin{equation}
\begin{split}
    \PP( \log Z^{\max}_{J^h,w} -  \log Z_{r,w} \geq Dh^2r^{1/3}) &\leq \PP( \log Z^{\max}_{L_r^{4|h|r^{2/3}},w} -  \log Z_{r,w} \geq Dh^2r^{1/3})\\
   &\leq \PP( \log Z_{L_r^{4|h|r^{2/3}},w} -  \log Z_{r,w} \geq \frac{1}{2}D|h|^{3/2}\sqrt{4|h|r^{2/3}})\\
   &= \PP( \log Z_{L_{w'}^{4|h|r^{2/3}}} -  \log Z_{0,w'} \geq \frac{1}{2}D|h|^{3/2}\sqrt{4|h|r^{2/3}})
\end{split}
\end{equation}
where 
\begin{equation}
    w' = \left(N-r + y\left(\frac{N}{N-r}\right)^{2/3}(N-r)^{2/3}, N-r - y\left(\frac{N}{N-r}\right)^{2/3}(N-r)^{2/3}\right)
\end{equation} 
and the last equality follows from the symmetry of the partition function. We can now apply Proposition \ref{prop_Line_to_point} by setting $K = \frac{D}{2}|h|^{3/2}$ and $a = 4|h|r^{2/3}$. To apply Proposition \ref{prop_Line_to_point}, we need $\frac{D}{2}|h|^{3/2} \leq c_1(N-r)^{1/2}$ where $c_1$ is the constant from Proposition \ref{prop_Line_to_point}. Since $\frac{D}{2}|h|^{3/2} \leq \frac{Dr^{1/2}}{2}$, we can just take $c_4 = \frac{4c_1^2}{D^2}$. Since $\left(\frac{N}{N-r}\right)^{2/3} \leq (c_4 + 1)^{2/3}$, we also need $|y| \leq c_2K^{1/10} (c_4+1)^{-2/3} = C'|h|^{3/20}$. Thus, it suffices to require $|y| \leq c_5 M^{1/10}$ for some new constant $c_5 >0$.  Finally, we get that 
\begin{equation}\label{equation_bound2}
\PP( \log Z^{\max}_{J^h,w} -  \log Z_{r,w} \geq Dh^2r^{1/3}) \leq e^{-C_1K^2} \leq e^{-C|h|^3}.
\end{equation}
Combining the bounds from \ref{equation_bound1} and \ref{equation_bound2}, we obtain
\begin{equation}
\begin{split}
    &\PP\left( \left||x^* - (r, r) \right||_\infty > M r^{2/3} \right)\\
    & \leq \sum_{|h| = \lfloor M/2 \rfloor}^{r^{1/3}} \PP(\log Z_{0,J^h}^{\max} - \log Z_{0,r} \geq -Dh^2r^{1/3}) + \PP( \log Z^{\max}_{J^h,w} - \log Z_{r,w}  \geq Dh^2r^{1/3})\\
    &\leq \sum_{|h| = \lfloor M/2 \rfloor}^\infty e^{-C|h|^3}\\
    &\leq e^{-C_3M^3}.
\end{split}
\end{equation}
\end{proof}

\begin{thm}\label{thm_temporal_tail}
Let $w = (N + yN^{2/3}, N - yN^{2/3})$. There exists positive constants $C_4,K_0,c_6$ and $N_0 \in \Z_{\geq 1}$ such that for all $N \geq N_0, c_3 \leq r \leq c_4(N-r)$, $K \geq K_0$, and $| y| \leq c_6K^{1/20}$ where $c_3, c_4$ are constants from \ref{prop_maximizer}, we have
\begin{equation}
\PP(\log Z_{0,w} - \log Z_{0,r} - \log Z_{r,w} \geq Kr^{1/3}) \leq 2e^{-C_4K^{1/10}}
\end{equation}
\end{thm}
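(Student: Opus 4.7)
The approach is to decompose the partition function through the intermediate anti-diagonal line $L_r$:
\[
Z_{0,w} = \sum_{x \in L_r} Z_{0,x}\, Z_{x,w},
\]
from which
\[
\log Z_{0,w} - \log Z_{0,r} - \log Z_{r,w} \leq \log(2r+1) + \log\max_{x \in L_r} \frac{Z_{0,x}\, Z_{x,w}}{Z_{0,r}\, Z_{r,w}}.
\]
The $\log(2r+1)$ term is absorbed into $(K/2) r^{1/3}$ after enlarging $K_0$ and using $r \geq c_3$, so the task reduces to showing that the logarithm of the maximum ratio is at most $(K/2) r^{1/3}$ except on an event of probability at most $e^{-C_3 K^{1/10}}$.

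The first step is to localize $x^\ast$ near $(r,r)$. Apply Proposition \ref{prop_maximizer} with parameter $K^{1/30}$ (enlarging $K_0$ so that $K_0^{1/30}$ exceeds the threshold appearing there): with probability at least $1 - e^{-C_2 K^{1/10}}$, $x^\ast$ lies in $L_r^{K^{1/30} r^{2/3}}$. On this event, $\max_{x \in L_r} Z_{0,x} Z_{x,w}$ is bounded above by $Z^{\max}_{0, L_r^{K^{1/30} r^{2/3}}} \cdot Z^{\max}_{L_r^{K^{1/30} r^{2/3}}, w}$, reducing the problem to estimating these two factors separately.

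The second step uses Proposition \ref{prop_Line_to_point} with $K_1 = K^{59/60}/4$ and window $a = K^{1/30} r^{2/3}$, chosen so that $K_1\sqrt{a} = K r^{1/3}/4$. Using $Z^{\max} \leq Z$ and Proposition \ref{prop_Line_to_point}, the failure probability of $\log Z_{0, L_r^{K^{1/30} r^{2/3}}} - \log Z_{0,r} \geq (K/4) r^{1/3}$ is at most $e^{-C_1 \min\{K^{59/30}/16,\, K r^{1/3}/4\}} \leq e^{-C K^{1/10}}$ for $r \geq c_3$ and $K$ large. The analogous bound on $\log Z^{\max}_{L_r^{K^{1/30} r^{2/3}}, w} - \log Z_{r,w}$ follows by the reversal symmetry of the i.i.d.\ environment (Lemma \ref{lemma_reverse_map}), which reduces the reverse direction to a forward free-energy estimate from the origin to a point of diagonal length $N-r$ and characteristic shift $y' = y(N/(N-r))^{2/3} \leq (1+\epsilon_0)^{2/3} y$, still in the admissible range of Proposition \ref{prop_Line_to_point} after suitably adjusting the universal constant $c_2$. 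A union bound over the three events (localization and the two tail bounds) then yields the claimed failure probability $\leq e^{-C_3 K^{1/10}}$.

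\textbf{Main obstacle.} The technical crux is simultaneously verifying the admissibility conditions $K_1 \leq c_1 r^{1/2}$ and $y' \leq c_2 K_1^{1/10}$ of Proposition \ref{prop_Line_to_point} with $K_1 = K^{59/60}/4$. In the regime $r < K^{59/30}/(4c_1^2)$ the former fails and one must instead invoke Proposition \ref{prop_t^10}, whose weaker exponent $K_1^{1/10} \asymp K^{59/600}$ must be compensated by the shape-function bound $|\Lambda(w) - \Lambda(r) - \Lambda(w-r)| \leq C y^2 N^{1/3} r/(N-r) = O(K^{1/5})$ (valid since $r \leq \epsilon_0(N-r)$) together with the one-point tail estimates from \cite[Propositions 3.7, 3.8]{basu2024temporal}. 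Interpolating carefully between these two regimes to preserve the target $e^{-C_3 K^{1/10}}$ rate uniformly in $r \in [c_3, \epsilon_0(N-r)]$ is the most delicate part of the argument.
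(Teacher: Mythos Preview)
Your decomposition through the anti-diagonal $L_r$, localization of the maximizer $x^*$, and separate control of the two ratio terms is exactly the structure of the paper's proof. The difference is in the choice of localization window and, correspondingly, in which line-to-point estimate you invoke.

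The paper localizes with window $Kr^{2/3}$ (not $K^{1/30}r^{2/3}$): Proposition~\ref{prop_maximizer} applied with parameter $K$ already gives failure probability $e^{-C_2K^3}$, far better than needed. On the good event, the two remaining terms
\[
\PP\Big(\log Z^{\max}_{0,L_r^{Kr^{2/3}}} - \log Z_{0,r} \geq \tfrac{1}{2}Kr^{1/3}\Big)
\quad\text{and}\quad
\PP\Big(\log Z^{\max}_{L_r^{Kr^{2/3}},w} - \log Z_{r,w} \geq \tfrac{1}{2}Kr^{1/3}\Big)
\]
are bounded directly by Proposition~\ref{prop_t^10}, which is precisely Proposition~\ref{prop_Line_to_point} with the restriction $K\le c_1N^{1/2}$ already removed. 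No case split on the size of $r$ relative to $K$, and no separate shape-function interpolation, is needed.

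In other words, the ``main obstacle'' you describe is self-imposed: by choosing the narrow window $K^{1/30}r^{2/3}$ you force the parameter $K_1=K^{59/60}/4$ into Proposition~\ref{prop_Line_to_point} and then have to worry about its upper-bound hypothesis. The paper's larger window makes the maximizer bound overshoot harmlessly and lets Proposition~\ref{prop_t^10} absorb the entire range of $K$ in one line. Your reversal argument for the second term (giving $y'=y(N/(N-r))^{2/3}\le(1+\epsilon_0)^{2/3}y$) is correct and is implicitly what the paper uses as well.
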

\begin{proof}
    Notice that it suffices to replace $\log Z_{0,w}$ by $\max_{x \in L_r}\{\log Z_{0,x} + \log Z_{x,w}\}$. This is because
    \begin{equation}
    \log Z_{0,w} \leq \max_{x \in L_r}\{\log Z_{0,x} + \log Z_{x,w}\} + \log(2r+1).
    \end{equation}
    Thus, we only need to consider the following probability
    \begin{equation}
    \PP(\max_{x \in L_r}\{\log Z_{0,x} + \log Z_{x,w}\} - \log Z_{0,r} - \log Z_{r,w} \geq Kr^{1/3}).
    \end{equation}
    We can split this into two cases by a union bound. The first case is that $x^* \notin L^{Kr^{2/3}}_r$ and the second case is that $x^* \in L^{Kr^{2/3}}_r$. By Proposition \ref{prop_maximizer}, we know that the first probability is bounded by $e^{-C_2K^3}$. For the case that $x^* \in L^{Kr^{2/3}}_r$, we have the following bound:
    \begin{equation}
    \begin{split}
        &\PP(\max_{x \in L^{Kr^{2/3}}_r}\{\log Z_{0,x} + \log Z_{x,w}\} - \log Z_{0,r} - \log Z_{r,w} \geq Kr^{1/3})\\
        &\leq \PP(\log Z_{0,L_r^{Kr^{2/3}}} - \log Z_{0,r} \geq \frac{1}{2}Kr^{1/3}) + \PP(\log Z_{L_r^{Kr^{2/3}},w} - \log Z_{r,w} \geq \frac{1}{2}Kr^{1/3})\\
        &\leq 2e^{-C_4K^{1/10}}
    \end{split}
    \end{equation}
    where the last inequality comes from Proposition \ref{prop_t^10}.
\end{proof}

Now we are ready to prove Proposition \ref{prop_tail_bounds}, which is restated below.
\begin{prop}
    Fix $t_0 > 0$ and $M > 0$. There exist positive constants $C_5, C_6,r_0$, $N_1 \in \Z_{\geq 1}$ for any $N \geq N_1$, $d_1 \in (0,r_0] \cap 2^{-1}N^{-1}\Z$ and $d_2 \in (0,r_0] \cap N^{-2/3}q^2\Z$, $K \geq 0$, $y \in N^{-2/3}q^2\Z, |y| \leq M$, and $t \in 2^{-1}N^{-1}\Z \cap (t_0, \infty)$, we have
    \begin{equation}\label{equation_temporal}
    \PP(|h^N(0,0;y;t+d_1) - h^N(0,0;y,t)| \geq Kd_1^{1/3}) \leq C_6e^{-C_5K^{1/10}}
    \end{equation}
    \begin{equation}\label{equation_spatial}
    \PP(|h^N(0,0;y+d_2;t) - h^N(0,0;y,t)| \geq Kd_2^{1/3}) \leq C_6e^{-C_5K^{1/10}}.
    \end{equation}
\end{prop}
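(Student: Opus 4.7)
The plan is to treat the temporal and spatial tail bounds by different decompositions, in both cases combining the variational formula, the one-point tail estimate from \cite[Proposition~3.8]{basu2024temporal}, Proposition~\ref{prop_t^10}, and Theorem~\ref{thm_temporal_tail}. Write $w_1 = (\overline{y}_N + 2Nt, 2Nt)$, so the temporal endpoint is $w_2 = w_1 + (n,n)$ with $n = 2Nd_1$ and the spatial endpoint is $w_2' = w_1 + (m,0)$ with $m = \overline{(y+d_2)}_N - \overline{y}_N \asymp N^{2/3} d_2$. I will choose $r_0$ small enough that $n$ (resp.\ $m/2$) satisfies the size condition $n \leq \epsilon_0(2Nt - n)$ needed to apply Theorem~\ref{thm_temporal_tail}; for $K \leq K_0$ the inequality is trivial upon enlarging $C_4$.

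For the temporal estimate (\ref{equation_temporal}), the lower tail is immediate: the variational formula gives $\log Z_{(1,1), w_2} \geq \log Z_{(1,1), w_1} + \log Z_{w_1 + (0,1), w_2}$, and by shift invariance of the iid environment the second summand is distributed as the diagonal free energy of length $n$, centered at $4Nd_1 p$, whose lower tail at scale $n^{1/3} \asymp N^{1/3}d_1^{1/3}$ follows from \cite[Proposition~3.8]{basu2024temporal}. The upper tail is obtained by applying Theorem~\ref{thm_temporal_tail} in reversed orientation: the reflection $R(i,j) = (w_{2,1}+1-i, w_{2,2}+1-j)$ is an involution that preserves the iid law of the environment and reverses up-right paths, with $R((1,1)) = w_2$ and $R(w_1) = (n+1, n+1)$. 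Setting $\tilde d_{i,j} := d_{R(i,j)}$ one reads off the equalities $Z^d_{(1,1),w_2} = Z^{\tilde d}_{(1,1),w_2}$, $Z^d_{(1,1),w_1} = Z^{\tilde d}_{(n+1,n+1),w_2}$, and $Z^d_{w_1,w_2} = Z^{\tilde d}_{(1,1),(n+1,n+1)}$, so the intermediate becomes a diagonal point in the $\tilde d$-polymer and Theorem~\ref{thm_temporal_tail} applies with $r_{\mathrm{thm}} = n+1$, effective $N_{\mathrm{thm}} \asymp 2Nt$, and bounded normalized off-diagonal $y_{\mathrm{thm}}$. Combining the resulting inequality $\log Z^d_{(1,1),w_2} - \log Z^d_{(1,1),w_1} - \log Z^d_{w_1,w_2} \leq Kn^{1/3}$ with the one-point upper tail on $\log Z^d_{w_1,w_2}$ and rescaling by $2^{-1/2}q\sigma_p N^{-1/3}$ yields (\ref{equation_temporal}).

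For the spatial estimate (\ref{equation_spatial}), the difficulty is that $w_2'$ does not lie on the antidiagonal $L_{w_1}$, so Proposition~\ref{prop_t^10} is not directly applicable. I will decompose $(m,0) = (\lceil m/2 \rceil, \lceil m/2 \rceil) + (\lfloor m/2 \rfloor, -\lfloor m/2 \rfloor)$ into diagonal and antidiagonal pieces. For the upper tail, set $u = w_1 + (\lceil m/2 \rceil, \lceil m/2 \rceil)$, so that $w_2' \in L_u^{\lceil m/2 \rceil}$: Proposition~\ref{prop_t^10} gives $\log Z_{(1,1), w_2'} \leq \log Z_{(1,1), u} + K_1 \sqrt{m/2}$, and the temporal argument above (now applied to the diagonal shift from $w_1$ to $u$) gives $\log Z_{(1,1), u} - \log Z_{(1,1), w_1} \leq mp + K_2 (m/2)^{1/3}$. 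For the lower tail, set instead $u_1 = w_1 + (\lfloor m/2 \rfloor, -\lfloor m/2 \rfloor)$, so that $w_1 \in L_{u_1}^{\lfloor m/2 \rfloor}$ and $u_1 \leq w_2'$: Proposition~\ref{prop_t^10} bounds $\log Z_{(1,1),w_1} \leq \log Z_{(1,1), u_1} + K_1 \sqrt{m/2}$, and the variational formula together with the one-point lower tail on the small diagonal box $\log Z_{u_1 + (0,1), w_2'}$ gives $\log Z_{(1,1), w_2'} \geq \log Z_{(1,1), u_1} + mp - K_2 (m/2)^{1/3}$. After multiplication by $2^{-1/2}q\sigma_p N^{-1/3}$, the two scales $\sqrt{m/2}$ and $(m/2)^{1/3}$ become $O(d_2^{1/2})$ and $O(N^{-1/9} d_2^{1/3})$ respectively, each of which is $O(d_2^{1/3})$ for $d_2 \leq r_0 \leq 1$ and $N$ large.

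The most delicate step is the reflection-based reversal of Theorem~\ref{thm_temporal_tail}: I need to carefully verify the three partition-function identities under the $d \leftrightarrow \tilde d$ bijection and confirm that the constraints $r \leq \epsilon_0(N-r)$ and $0 \leq y_{\mathrm{thm}} \leq c_2 K^{1/10}$ of that theorem hold in the reflected coordinates (the sign issue for $y$ is handled by the transposition symmetry $(i,j) \mapsto (j,i)$ of the iid environment, and the scale condition forces the choice $d_1 \leq r_0$). A secondary bookkeeping concern is the cancellation of shape-function centerings in the spatial decomposition; this works out cleanly because the diagonal-shift component $u - w_1$ contributes $mp$ to the mean while the antidiagonal-shift component contributes $0$.
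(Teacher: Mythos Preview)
Your temporal argument via reflection is essentially the paper's Case~1 (the paper applies Theorem~\ref{thm_temporal_tail} directly, tacitly using the same reversal symmetry you spell out). However, you have overlooked the lower-bound hypothesis $c_3 \leq r$ in Theorem~\ref{thm_temporal_tail}: when $n = 2Nd_1 < c_3$ (which does occur, since $d_1$ can be as small as $(2N)^{-1}$), the theorem is unavailable, and the one-point estimates are far too crude here because they operate at scale $(Nt)^{1/3}$ rather than $n^{1/3}$. The paper treats this small-$n$ regime separately (its Case~2), writing the increment $\log Z_{1,w_2} - \log Z_{1,w_1}$ as a short walk and invoking the stationary-polymer coupling \cite[Theorem~3.28]{basu2024temporal} to sandwich it between i.i.d.\ random walks with perturbed parameters; this coupling is the missing ingredient in your plan.

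Your spatial argument---splitting the horizontal move into a diagonal step plus an anti-diagonal step and controlling the latter by Proposition~\ref{prop_t^10}---is a genuinely different route from the paper, which instead applies the same random-walk coupling \cite[Theorem~3.28]{basu2024temporal} directly to the horizontal increments $\log Z_{1,w+(i,0)} - \log Z_{1,w+(i-1,0)}$. Your decomposition is clean and the scale bookkeeping you give is correct ($N^{-1/3}\sqrt{m} \asymp d_2^{1/2} \leq d_2^{1/3}$), but it reduces the spatial bound to the temporal one and therefore inherits the same small-increment gap: when $\lceil m/2 \rceil < c_3$ you cannot call Theorem~\ref{thm_temporal_tail} for the diagonal piece. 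So either way you will need the random-walk approximation for short increments; once you import that tool, either your geometric decomposition or the paper's direct approach will close the argument.
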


\begin{proof}
    Let us begin by proving (\ref{equation_temporal}) first. We consider two separate cases.

        \textbf{Case 1}: $2Nd_1 \geq c_3$, where $c_3$ is the constant in Theorem \ref{thm_temporal_tail}. In this case, we can apply Theorem \ref{thm_temporal_tail} by requiring $N_1$ to be large enough so that $2N_1t_0 \geq N_0$ and $r_0$ small enough so that $r_0 \leq c_4t_0$ for constants $N_0$ and $c_4$ in Theorem \ref{thm_temporal_tail}. Lastly, we choose $K_0$ large enough so that $c_6K_0^{1/20} \geq q^2(2t_0)^{-2/3}M$ for constant $c_6$ in Theorem \ref{thm_temporal_tail}. Let $w_1 = (2Nt, 2Nt + N^{2/3}yq^{-2})$ and $w_2 = (2N(t+d_1), 2N(t+d_1) + N^{2/3}yq^{-2})$. Then,
    \[
    \begin{split}
            &\PP(|h^N(0,0;y,t+d_1)- h^N(0,0;y,t)|\geq Kd_1^{1/3})\\
            & = \PP(|\log Z_{1, w_2} - \log Z_{1, w_1}-4pNd_1|\geq Kd^{1/3}N^{1/3}2^{1/2}q^{-1}\sigma_p)\\
            & \leq \PP(|\log Z_{1, w_2} - \log Z_{1, w_1}-\log Z_{w_1,w_2}|\geq Kd_1^{1/3}N^{1/3}2^{-1/2}q^{-1}\sigma_p)\\
            & \quad + \PP(|\log Z_{w_1,w_2} - 4pNd_1|\geq Kd_1^{1/3}N^{1/3}2^{-1/2}q^{-1}\sigma_p)\\
        \end{split}
    \]
    Because $\log Z_{1,w_2} - \log Z_{1,w_1} \geq \log Z_{w_1,w_2}$ holds regardless of the random environment, we can remove the absolute value in the first term and apply Theorem \ref{thm_temporal_tail}. For the second term, it can be directly bounded by the one-point tail bound in \cite[Proposition 3.6]{basu2024temporal} and \cite[Proposition 3.8]{basu2024temporal}. The condition that $K \geq K_0$ for sufficiently large $K_0$ can be absorbed into the constant $C_6.$
        
    \textbf{Case 2}: $1 \leq 2Nd_1 \leq c_3$. For $K \leq a_0^{3/2}N^{1/2}$ where $a_0$ is the constant from \cite[Theorem 3.28]{basu2024temporal}, we apply the same random walk approximation twice as in the proof of Proposition \ref{prop_Line_to_point}. Let $w = ( 2N(t +d_1) , 2N(t +d_1)  +  N^{2/3}yq^{-2} )$, $w' = ( 2N(t +d_1) , 2Nt  +  N^{2/3}yq^{-2} )$, and $w'' = ( 2Nt , 2Nt  +  N^{2/3}yq^{-2} )$. Then,
    \begin{equation}
    \begin{split}
        &\PP(|\log Z_{1,w} - \log Z_{1,w''}-4pNd_1| \geq Kd_1^{1/3}N^{1/3}2^{1/2}q^{-1}\sigma_p )\\
        &\leq \PP(|\log Z_{1,w} - \log Z_{1,w''}-4pNd_1| \geq CK)\\
        &\leq \PP(|\log Z_{1,w} - \log Z_{1,w''}| \geq CK - 4pc_3)\\
        &\leq \PP(|\log Z_{1,w} - \log Z_{1,w'}| \geq \frac{1}{2}C'K ) +\PP(|\log Z_{1,w'} - \log Z_{1,w''}| \geq \frac{1}{2}C'K )\\
    \end{split}
    \end{equation}
where $C = c_3^{1/3}2^{1/2}q^{-1}\sigma_p$ and $C',K_0$ are chosen sufficiently large so that the constant $4pc_0$ is absorbed into the $C'K$ term for all $K \geq K_0$. We then apply \cite[Theorem 3.28]{basu2024temporal} with $s = K^{2/3}$. Together with the random walk approximation for sub-exponential variables, we have that
\begin{equation}
 \PP(|\log Z_{1,w} - \log Z_{1,w'}| \geq \frac{1}{2}C'K ) +\PP(|\log Z_{1,w'} - \log Z_{1,w''}| \geq \frac{1}{2}C'K ) \leq C_4e^{-C_5K}.
\end{equation}

For $K \geq a_0N^{1/2}$, we note that $|\Lambda(w) - \Lambda(w')|$ can be bounded by some large constant independent of $K$, we can directly bound the probability as follows:
 \begin{equation}
    \begin{split}
        &\PP(|\log Z_{1,w} - \log Z_{1,w''}-4pNd_1| \geq CK)\\
        &\leq \PP(|\log Z_{1,w} - \Lambda(w)| \geq C'N^{1/2} ) + \PP(|\log Z_{1,w''} -\Lambda(w')| \geq C'N^{1/2} )\\
        & \leq C_6e^{-C_5K^{1/10}}.
    \end{split}
    \end{equation}
The last inequality follows from \cite[Proposition 3.6]{basu2024temporal} and \cite[Proposition 3.8]{basu2024temporal}.

We now turn to the proof of equation (\ref{equation_spatial}).  Let $w = ( 2Nt ,  2Nt  + N^{2/3}yq^{-2} )$ and $w' =  ( 2Nt ,  2Nt  +  N^{2/3}(y + d_2)q^{-2} )$. The probability in equation (\ref{equation_spatial}) is equivalent to the following expression:
    \begin{equation}
        \PP(|\log Z_{1,w'} - \log Z_{1,w} - pN^{2/3}d_2q^{-2}| \geq K d_2^{1/3}N^{1/3}2^{1/2}q^{-1}\sigma_p).
    \end{equation}
    For $K \leq a_0^{3/2}N^{1/2}$ where $a_0$ is the constant from \cite[Theorem 3.28]{basu2024temporal}, we still use the random walk approximation. Let $ a=  d_2N^{2/3}q^{-2} $ and for all $k \in \lb 0, a \rb$, define
    \begin{equation}
 \log Z_{1,w+(k,0)} - \log Z_{1,w} = \sum_{i=1}^{k} \log Z_{1, w + (i,0)} - \log Z_{1, w + (i-1,0)} = S_k. 
    \end{equation}
    We can view each summand as a step in the walk and \cite[Theorem 3.28]{basu2024temporal} allows us to upper and lower bound this walk by i.i.d. random walks $S^1, S^2$ where $S^2_k + \log \frac{9}{10} \leq S_k \leq S^1_k + \log \frac{10}{9}$ for all $k\in \lb 0, a \rb$  with high probability. Let $\rho \in (0, \theta)$ such that $w = 2N \xi[\rho]$. We set the perturbed parameters to be 
    \begin{equation}
    \lambda = \rho + q_0K^{2/3}N^{-1/3}
    \end{equation}
    \begin{equation}
    \eta = \rho - q_0K^{2/3}N^{-1/3}.
    \end{equation}
    Thus, the distribution of the steps of $S^1$ is given by $\log(\text{Ga}^{-1}(\theta - \lambda))$ and the distribution of the steps of $S^2$ is given by $\log(\text{Ga}^{-1}(\theta - \eta))$. Let $A$ denote the event that $S^2_k + \log \frac{9}{10} \leq S_k \leq S^1_k + \log \frac{10}{9}$ for all $k\in \lb 0, a \rb$. By \cite[Theorem 3.28]{basu2024temporal}, we know that $\PP(A^c) \leq e^{-CK^2}$. Thus, it suffices to bound the i.i.d. random walks with appropriate probability tail bound. Let $m(w)$ be the slope of vector $w$. Then,
    \begin{equation}
    |m(w)-1| =\left|\frac{-yN^{2/3}q^{-2}}{yN^{2/3}q^{-2} + 2Nt} \right|\leq 2^{-1}t_0|y|q^2N^{-1/3}.
    \end{equation}
    Apply \cite[Proposition 3.2]{basu2024temporal}, we get that 
    \begin{equation}
    \left|\rho - \frac{\theta}{2}\right| \leq  C'N^{-1/3}.
    \end{equation}
    Recall that for $X \sim \log(\text{Ga}^{-1}(\theta - z))$, $\E[X] = -\Psi(\theta - z)$ and $p = -\Psi(\theta/2)$. Then,
    \begin{equation}
    \begin{split}
        \left|\E[S_a^1] - pN^{2/3}d_2q^{-2}\right| &= \left|-a\Psi(\theta-\lambda) +a\Psi\left(\frac{\theta}{2}\right)\right|\\
        &= a \left| \Psi\left(\frac{\theta}{2}-\rho + \frac{\theta}{2}-q_0K^{2/3}N^{-1/3}\right) - \Psi\left(\frac{\theta}{2}\right)\right|\\
        &\leq ac'(K^{2/3} + C')N^{-1/3}\\
        &\leq d_2C''K^{2/3}N^{1/3}
    \end{split}
    \end{equation}
    where we absorb the constant $C'$ into $C''K^{2/3}$ by requiring $K \geq K_0$ for some large constant $K_0$.
    Thus, 
    \begin{equation}
    \begin{split}
        &\PP(S_a^{1} - pN^{2/3}d_2q^{-2} \geq K d_2^{1/3}N^{1/3}2^{1/2}q^{-1}\sigma_p)\\
        &= \PP(S_a^1 - \E[S_a^1]+\E[S_a^1] - pN^{2/3}d_2q^{-2} \geq K d_2^{1/3}N^{1/3}2^{1/2}q^{-1}\sigma_p).
    \end{split}
    \end{equation}
    We choose $K_0$ sufficiently large and $r_0$ sufficiently small such that $d_2C''K^{2/3}N^{1/3}$ is absorbed into the term $K d_2^{1/3}N^{1/3}2^{-1/2}q^{-1}\sigma_p$. The remaining task is to bound the random walk with i.i.d. subexponential variable, which yields the desired tail estimate. Everything is analogous for $S^2.$ 

    Lastly, for $K \geq a_0^{3/2}N^{1/2}$, set $K = zN^{1/2}$ where $z \geq a_0^{3/2}$. The probability in equation (\ref{equation_spatial}) is bounded by the following:
    \begin{equation}\label{equation+}
    \PP(|\log Z_{1,w'} - \log Z_{1,w} - pN^{2/3}d_2q^{-2}|\geq Czd_2^{1/3}N^{5/6}).
    \end{equation}
    Absorb $pN^{2/3}d_2q^{-2}$ into $C d_2^{1/3}N^{5/6}$. Then, we can bound (\ref{equation+}) by
    \[
    \PP(|\log Z_{1,w'} - 
   \Lambda(w')|\geq C' zd_2^{1/3}N^{5/6}) + \PP(|\log Z_{1,w} - 
    \Lambda(w)|\geq C'z d_2^{1/3}N^{5/6})
    \]
    because $|\Lambda(w') - \Lambda(w)|$ can be bounded by $C''N^{2/3}$ and thus absorbed into $C' zd_2^{1/3}N^{5/6}$. Since $d_2N^{2/3}q^{-2} \in \Z_{\geq 0}$ and the inequality is trivial with $d_2N^{2/3}q^{-2} = 0$, we can assume that $d_2^{1/3}N^{2/9} \geq q^{2/3}$. Thus,
    \[
    \begin{split}
   & \PP(|\log Z_{1,w'} - 
    \Lambda(w')|\geq C' zd_2^{1/3}N^{5/6}) + \PP(|\log Z_{1,w} - 
   \Lambda(w)|\geq C' zd_2^{1/3}N^{5/6})
    \\
    &\leq \PP(|\log Z_{1,w'} - 
    \Lambda(w')|\geq C'''zN^{1/3}N^{5/18}) + \PP(|\log Z_{1,w} - 
    \Lambda(w)|\geq C'''zN^{1/3}N^{5/18}).
    \end{split}
    \]
    From this point, it is clear that we can apply the one-point tail bound in \cite[Proposition 3.6]{basu2024temporal} and \cite[Proposition 3.8]{basu2024temporal} again and establish the desired tail bound.
\end{proof}

\bibliographystyle{amsalpha}
\bibliography{main.bib}
\end{document}